\pgfplotsset{compat=1.15}
\definecolor{ccqqqq}{rgb}{0.8,0,0}
\definecolor{qqqqff}{rgb}{0,0,1}
\definecolor{uuuuuu}{rgb}{0.26666666666666666,0.26666666666666666,0.26666666666666666}
\definecolor{ffffff}{rgb}{1,1,1}
\definecolor{xdxdff}{rgb}{0.49019607843137253,0.49019607843137253,1}
\definecolor{ududff}{rgb}{0.30196078431372547,0.30196078431372547,1}
\newcommand{\triads}{\texttt{DualRootPosTriads}}
\newcommand{\dom}{\texttt{DomHalfDiminished}}
\newcommand{\majorsevenths}{\texttt{MajorSeventhChords}}
\newcommand{\minorsevenths}{\texttt{MinorSeventhChords}}
\newcommand{\diminishedsevenths}{\texttt{DimSeventhChords}}
\theoremstyle{plain}
\newtheorem{theorem}{Theorem}[section]
\newtheorem{proposition}[theorem]{Proposition}
\theoremstyle{definition}
\newtheorem{notation}[theorem]{Notation}
\theoremstyle{remark}
\newtheorem{remark}[theorem]{Remark}
\newtheorem{example}[theorem]{Example}
\begin{document}

\title{Transformations of Triads and Seventh Chords: \\ Group Extensions and Duality}
\author{Thomas M. Fiore\textsuperscript{a}\thanks{Corresponding author. Email: tmfiore@umich.edu \\
\textsuperscript{a}Department of Mathematics and Statistics, University of Michigan-Dearborn, Dearborn, MI, USA; \\
\textsuperscript{b}Department de Teoria, Composici\'o i Direcci\'o, Escola Superior de M\'usica de Catalunya, Barcelona, Spain;\\
\textsuperscript{c}Kenyon College, Gambier, OH, USA; \\
\textsuperscript{d}Evangel University, Springfield, MO, USA; \\
\textsuperscript{e}Lewis and Clark College, Portland, OR, USA; \\
\textsuperscript{f}St.\ Olaf College, Northfield, MN, USA; \\
\textsuperscript{g}Dipartimento di Matematica, Universit\`a di Pavia, Pavia, Italy; \\
\textsuperscript{h}Facolt\`a di Biologia e Farmacia, Universit\`a degli Studi di Cagliari, Cagliari, Italy; \\
\textsuperscript{i}IRMA, Universit\'e de Strasbourg, Strasbourg, France; \\ \textsuperscript{j}IRCAM/CNRS/UMPC, Paris, France \\ },
Thomas Noll\textsuperscript{b},
Ethan Bonnell\textsuperscript{c}, \\
Hayden Pyle\textsuperscript{d},
No\'{e} Rodriguez\textsuperscript{e},
Meredith Williams\textsuperscript{f}, \\
Sonia Cannas\textsuperscript{g,h,i},  and
Moreno Andreatta\textsuperscript{i,j}
}

\maketitle
\vspace{-7.5mm}
\begin{abstract}

Transformational music theory, pioneered by David Lewin, uses simply transitive group actions to analyze music. In this paper, we construct a simply transitive group action on a disjoint union of two sets, built from a simply transitive action on each set and an equivariant bijection connecting them. Motivational examples are the omnibus progression and the reflected omnibus progression, which involve the consonant triads and the dominant/half-diminished seventh chords, connected by the inclusion bijection. We provide other examples from Jazz tunes. More generally, we combine multiple simply transitive group actions via a ``meta-rotation''; examples include a simply transitive group acting on consonant triads and a variety of seventh chords, as well as a meta-rotation that realizes the root position seventh chord sequence of the flattening transformation (described by Clough-Douthett's $J$-function). The constructions in our theorems extend Lewin dual pairs to Lewin dual pairs. We formulate the constructions in terms of short exact sequences and central extensions as well. Contextual groups are also elucidated: the interval content of the generating pitch-class segment determines whether or not a generalized contextual group is generated by contextual inversions. \\

{\bf Keywords:} neo-Riemannian group; generalized contextual group; triads; seventh chords; equivariance; internal direct product; internal semi-direct product; group extension; short exact sequence; rotation; meta-rotation \\

\noindent \textit{2020 Mathematics Subject Classification}: 00A65; 20B35 \\
\textit{2012 Computing Classification Scheme}: none

\end{abstract}

\section{Motivation, introduction, literature overview, and article summary}

In transformational music analysis, one often encounters situations in which a group acts on two different sets connected with an equivariant (or anti-equivariant) bijection. How can this conglomeration be combined into one group action, and how can Lewinian duality (if present) carry over? This is the topic of the present article, with a special focus on seventh chords working in tandem with consonant triads, prototypical passages containing them, and generalizations to a group acting on more than two sets.

A familiar example of a one-group-two-set conglomeration is the $TI$-group acting on both the 24 major and minor triads {\it and} the 24 dominant and half-diminished sevenths.\footnote{In Section~\ref{sec:notations} we describe how we encode chords in this article. Here is an abbreviated description for readers who want to immediately know more in the context of the Introduction. In this article we encode the 24 major and minor triads as pitch-class segments in {\it dualistic root position}: this means majors are written in root position, e.g. $\langle 0,4,7\rangle$, while minors are written in reverse root position, e.g. $\langle 7,3,0 \rangle$. This reverse ordering of the minor triads ensures that the $TI$-orbit of the pitch-class segment $\langle 0,4,7 \rangle$ is the set of 24 major and minor triads under consideration here. Similarly, dominant seventh chords are written in root position order, e.g. $\langle 0 , 4, 7, 10 \rangle$, while half-diminished seventh chords are written in reverse order, e.g. $\langle 7, 3, 0, 9 \rangle$ so that half-diminished chords have their root in the last entry. The $TI$-orbit of $\langle 0 , 4, 7, 10 \rangle$ is this set of 24 dominant seventh chords and half-diminished seventh chords. Each major triad sits in a unique dominant seventh chord, each minor triad sits in a unique half-diminished seventh chord, both in the first three notes, as we see here with $\langle 0, 4, 7 \rangle$ and $\langle 7,3,0 \rangle$ inside their seventh chords. The associated extension function $f$ from consonant triads to dominant sevenths and half-diminished sevenths is a $TI$-equivariant bijection from the $TI$-orbit of $\langle 0, 4, 7 \rangle$ to the $TI$-orbit of $\langle 0, 4, 7, 10 \rangle$. See Figure~\ref{fig:I7_commutes_with_f}. All of this is explained in more depth in Section~\ref{sec:notations}.} A prominent musical progression involving this conglomeration is the omnibus progression excerpt explored in \cite{hook2007cross}, pictured here in Figure~\ref{fig:omnibus}.

\begin{figure}
\begin{center}
This figure is Example 1 on page 2 of \cite{hook2007cross}, with a slight adaptation. \\
The figure is {\it not} included here because it is protected by copyright.
\end{center}
\caption{This omnibus progression excerpt (here starting on $a$ minor) involves the group actions of the $TI$-group and the $PLR$-group on the set of 24 major and minor triads and also on the set of 24 dominant and half-diminished seventh chords, though only some of those chords and transformations are engaged here. This is Example 1 on page 2 of \cite{hook2007cross}, with our addition of pitch-class segment labels and blue circles on the consonant triads. In Hook's discussion, pitch-class sets (without doubling) are used instead of selected pitch-class segments, so his $L'$ function from the pc-set class of consonant triads to the pc-set class of dominant sevenths and half-diminished sevenths is verbally defined by  ``$L'(\text{triad})$ equals that unique dominant seventh or half-diminished seventh chord that contains all the pitch classes of $L(\text{triad})$,'' as visible on the staff above. In the network, in the commutativity of $L'$ and $T_9$, we partially see the duality of the unified conglomeration as proved in Theorem~\ref{thm:construction_of_action_on_disjoint_union}, Example~\ref{examp:triadsandsevenths}, and Example~\ref{examp:omnibus} of the present article. The present article does not consider the particular voicing of the chords pictured here; instead, we focus on a selected unique ordering of each pitch-class set {\it without} octave doubling. A few comments about Hook's label ``$T_9 \;(\text{or }PR)$'' on the top horizontal arrows may be helpful. This label indicates that the analyst may choose to label the top horizontal arrows as $T_9$ or ``$PR$''. In fact, since Hook uses the ``left-function-first'' orthography, his ``$PR$'' is actually $RP$ in our paper, namely $RP(a)=R(A)=f\sh$. Moreover, $RP$ in our paper (Hook's ``$PR$'') is the {\it Schritt} $Q_{-9}=Q_3$. Later in Figure~\ref{fig:omnibus_network}, we take a different approach: we replace both bottom horizontal $T_9$ arrows of Figure~\ref{fig:omnibus} with $Q_9$, and use duality to transpose the entire 4-chord sequences.} \label{fig:omnibus}
\end{figure}

A natural bijection (we call it $f$) in this situation extends a major triad to the dominant seventh containing it, and extends a minor triad to the half-diminished seventh containing it. Written with a convenient choice of pc-segments, this bijection evaluated on $C$ and $c$ is
\begin{align}
f(C) = C^7,& \qquad f\langle 0, 4, 7\rangle = \langle 0, 4, 7, 10 \rangle \label{equ:Cmajorextension} \\
f(c) = A\textsuperscript{\o{}},& \qquad f\langle 7, 3, 0 \rangle = \langle 7,3,0,9 \rangle. \label{equ:cminorextension}
\end{align}
Clearly, this bijection $f$ is compatible with transposition, in that $f \circ T_i = T_i \circ f$. It is also compatible with all inversions $I_i$, as exemplified by the special case of $I_7$ in Figure~\ref{fig:I7_commutes_with_f}.
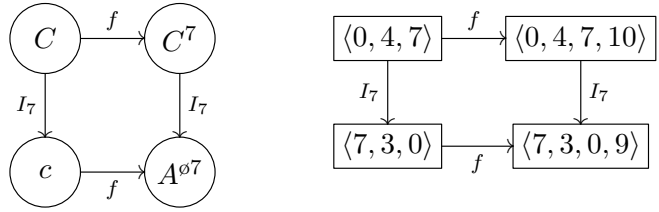
\begin{figure}
$$\entrymodifiers={=<2.2pc>[o][F-]}
\xymatrix{C \ar[r]^f \ar[d]_{I_7} & C^7 \ar[d]^{I_7} \\
c \ar[r]_f & A\textsuperscript{\o{}7}} \qquad \qquad
\entrymodifiers={+<2mm>[F-]}
\xymatrix{\langle 0,4,7 \rangle \ar[r]^-f \ar[d]_{I_7} & \langle 0,4,7,10 \rangle \ar[d]^{I_7} \\ \langle 7, 3, 0 \rangle \ar[r]_-f & \langle 7,3,0,9 \rangle}$$
\caption{The bijection $f$ extends a major triad to the dominant seventh containing it, and extends a minor triad to the half-diminished seventh containing it. As a special case of compatibility of the bijection $f$ with all inversions $I_k$, we see here the compatibility of $f$ with $I_7$ on the $C$ major chord. The left diagram shows the chord names, the right diagram shows the corresponding pitch-class segments. The circles and boxes indicate the same thing: each node is an individual object that is mapped, not a domain set or codomain set of the adjacent arrow.} \label{fig:I7_commutes_with_f}
\end{figure}
To summarize, we have described here our point of departure in a common situation: the $TI$-group acting simply transitively on two different sets that are connected with a $TI$-equivariant bijection.

The first main contribution of the present article is Theorem~\ref{thm:construction_of_action_on_disjoint_union}. It abstracts the algebraic essence of how these two $TI$-group actions on consonant triads and dominant/half-diminished seventh chords combine with the aforementioned equivariant bijection $f$ to form {\it one} larger simply transitive group action on the 48-element (disjoint) union, {\it and} to similarly construct the Lewinian dual\footnote{Recall that two subgroups $G$ and $H$ of a symmetric group $\text{Sym}(S)$ are {\it dual in the sense of Lewin} or are {\it Lewin dual} if each acts simply transitively and each is the centralizer of the other. Often we just write {\it dual} for {\it Lewin dual}.} of this larger group out of the familiar $PLR$-group acting on consonant triads and dominant/half-diminished seventh chords.\footnote{There is one difference between the extension of the $TI$-group and the extension of the $PLR$-group: the $TI$-group acts on both sets already at the outset, but the $PLR$-group acts only on the consonant triads at the outset, its action on dominant/half-diminished seventh chords arises by transferring the consonant triad action across the bijection $f$.}  Each of these two larger groups is the internal {\it direct} product of its initial smaller group with an order 2 element $\overline{f} := f \bigsqcup f^{-1}$, inside of the symmetric group on the 48 chords. The duality is already visible in the omnibus excerpt in Figure~\ref{fig:omnibus}: Hook's $L'$ functionally extends to $\overline{f} \circ \overline{L}$ in our framework, where $\overline{L}$ is $L$ on consonant triads and is $fLf^{-1}$ on dominant/half-diminished sevenths. Consequently, on consonant triads $\overline{f} \circ \overline{L}$ is $f \circ L$, and on dominant/half-diminished sevenths $\overline{f} \circ \overline{L}$ is $L \circ f^{-1}$. This extended $L'$ as $\overline{f} \circ \overline{L}$ and Hook's original $L'$ both commute with $T_9$ in the network.

Our second main contribution of the present article is to adapt the foregoing group extension construction to two simply transitive group actions connected by a $G$-anti-equivariant bijection $f\colon\; S_1 \to S_2$ in Theorem~\ref{thm:anti-equivariant}. To say $f$ is {\it $G$-anti-equivariant} means
$f( g \cdot s)=g^{-1} \cdot f(s)$ for all $g \in G$ and all $s \in S_1$.
In this case, the resulting group is not an internal direct product, but an internal {\it semi-direct} product. In this way, we reconstruct the $TI$-group, the $PLR$-group, and the generalized contextual group associated to a pitch-class segment. Duality in the anti-equivariant case is more involved, and requires additional ingredients $(H,k)$ up front, as proved in Theorem~\ref{thm:anti-equivariant} and applied in Examples~\ref{examp:anti-equivariant_duality_TI-PLR} and \ref{examp:generalizedcontextualgroupagain}. An important earlier paper on building neo-Riemannian groups through extensions was \cite{popoff2013}.

Our third main contribution is Theorem~\ref{thm:generalized}, which constructs a dual pair of groups on the disjoint union of multiple sets, out of equivariant bijections and a dual pair on just one of the sets. To give the main idea of the algebraic outcome of Theorem~\ref{thm:generalized}, we briefly sketch a direct proof of a simplified special case here, without the need of our full algebraic theorems. In Figure~\ref{fig:basicexample_multiset}, we have the sets of consonant triads, major seventh chords, dominant/half-diminished seventh chords, and minor seventh chords, each realized as the $TI$-orbit of the corresponding pitch-class segment in the right diagram. The right diagram also suggests $TI$-equivariant bijections by way of single-element assignments. The disjoint union of the maps in the left diagram is a self-bijection $\overline{f}$ on the disjoint union $\overline{S}$ of the four sets, and this disjoint union of maps $\overline{f}$ forms a ``meta-rotation'' of order 4. On each of these four sets, the $TI$-group acts simply transitively, and the generated group
\begin{equation} \label{equ:basic_TI_extension}
\langle TI\text{-group},\; \overline{f}\rangle \leq \text{Sym}(\overline{S})
\end{equation}
is an internal direct product because $\overline{f}$ commutes with all transpositions and inversions. The internal direct product is isomorphic to the external direct product
\begin{align*}
\langle TI\text{-group},\; \overline{f}\rangle & \cong TI\text{-group} \times \langle \overline{f} \rangle \\ & \cong TI\text{-group} \times \mathbb{Z}_4
\end{align*}
so this group has $24 \times 4 = 96$ elements. It acts simply transitively on the union $\overline{S}$ of the four sets in the left diagram of Figure~\ref{fig:basicexample_multiset}. On the other hand, we have the $PLR$-group acting simply transitively on \triads. The $PLR$-group is the dual group to the $TI$-group on \triads. The generators $P$, $L$, and $R$ (with our choice of dualistic root position ordering for consonant triads) are the same as the contextual inversions $K_{1,3}$, $K_{2,3}$, and $K_{1,2}$ (as in equation \eqref{equ:contextualinversion_Kij_definition} but for 3-tuples). These three contextual inversions also act on the other three sets in Figure~\ref{fig:basicexample_multiset}, via formula \eqref{equ:contextualinversion_Kij_definition}, so we have a simply transitive action of the $PLR$-group on all four sets. All four maps in Figure~\ref{fig:basicexample_multiset} commute with $K_{i,j}$ for $1 \leq i,j \leq 3$ (by Theorems~\ref{thm:conjugation_with_extension} and \ref{thm:modifications}), so $\overline{f}$ commutes with these $PLR$-group actions. Finally, the generated group
\begin{equation} \label{equ:basic_PLR_extension}
\langle PLR\text{-group},\; \overline{f}\rangle \leq \text{Sym}(\overline{S})
\end{equation}
is an internal direct product, isomorphic to the external direct product
\begin{align*}
\langle PLR\text{-group},\; \overline{f}\rangle &\cong PLR\text{-group} \times \langle \overline{f} \rangle
\\ & \cong PLR\text{-group} \times \mathbb{Z}_4
\end{align*}
so this group also has $24 \times 4 = 96$ elements. The two groups in \eqref{equ:basic_TI_extension} and \eqref{equ:basic_PLR_extension} are dual groups inside $\text{Sym}(\overline{S})$, each acting simply transitively on the union of consonant triads, major seventh chords, dominant/half-diminished seventh chords, and minor seventh chords. This example is a motivation for Theorem~\ref{thm:generalized}, which systematizes this construction, and allows the inclusion of diminished seventh chords as well, though the action on diminished seventh chords will not be through contextual inversions, rather through conjugations of contextual inversions with a modification. The $PLRQ$-group of
\cite{CannasMCM2017} does not arise in this way, because their $PLRQ$-group does not act simply transitively and does not commute with inversion $I_k$.

\begin{figure}
$$\xymatrix{\triads \ar[r] & \majorsevenths \ar[d] \\
\minorsevenths \ar[u] & \dom \ar[l] }
\quad \quad
\xymatrix{\langle 0,4,7 \rangle \ar@{|->}[r] & \langle 0,4,7,11\rangle \ar@{|->}[d] \\
\langle 0,4,7,9 \rangle \ar@{|->}[u] & \langle 0,4,7,10 \rangle \ar@{|->}[l]}
$$
\caption{The union of the four maps in the left diagram is a self-bijection of the union of the four sets. This self-bijection is an example of a meta-rotation called $\overline{f}$. In the right diagram, we indicate the functions only on a single element. All four functions are $TI$-equivariant, so their values on all other elements of the respective $TI$-classes can be computed from the right diagram. } \label{fig:basicexample_multiset}
\end{figure}
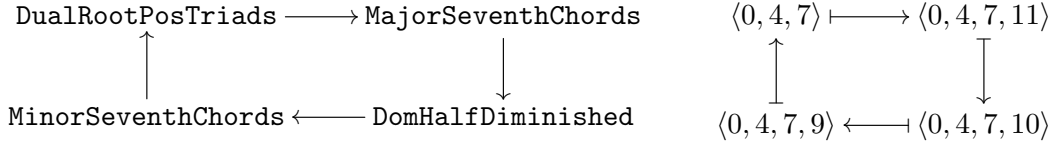

\bigskip

{\bf Overview of related literature.} The circle of ideas discussed in this paper has a long and illustrious history, here we mention a few citations. The present paper is quite self-contained, so new readers can continue without a full study of the vast literature. Neo-Riemannian theory was initiated and pioneered in \cite{LewinGMIT}; the famous neo-Riemmanian operations $P$, $L$, and $R$ were defined on page 178 of that book, and duality is defined on page 253. For a music-theoretical introduction and survey, see \cite{cohn1997} and \cite{cohnsurvey}. For a short mathematically-oriented introduction to the $PLR$-group and duality, see \cite{cransfioresatyendra}. The main proof of \cite{cransfioresatyendra} was improved in \cite{BerryFiore}, and a few gaps in the literature on dual groups were filled there as well. Volume 42 Issue 2 of the {\it Journal of Music Theory} from 1998 is a fascinating presentation of the neo-Riemannian research up to that year. That issue also included several graphical visualizations of groups in \cite{DouthettSteinbach}, one of which was discovered much earlier in \cite{Waller}. The group structure of the subgroup of $SL(3, \mathbb{Z}_{12})$ generated by the naive linear extensions of $P$, $L$ and $R$ was completely determined in \cite{fiorenoll_SIAM}. Permutations were included into the duality between the $TI$-group and the $PLR$-group in \cite{fiorenollsatyendraMCM2013}.

The analogues of $P$, $L$, and $R$ on dominant sevenths and half-diminished sevenths were studied in \cite{childs} and \cite{gollin}, without algebraically connecting them to consonant triads. In that same issue \cite{DouthettSteinbach} graphically connected consonant triads and augmented triads in their {\it cube dance} graph, but did not connect with sevenths. They made a {\it separate} graph of dominant sevenths, half-diminished sevenths, minor sevenths, and diminished sevenths called {\it power towers} in \cite{DouthettSteinbach}. Later, beyond triads and sevenths, a kind of $PLR$-group for general pitch-class segments containing at least one non-trivial interval besides a tritone was developed in \cite{fioresatyendra2005}. The larger group of {\it uniform triadic transformations} acting on root-parity pairs was introduced and thoroughly studied in \cite{hookUTTthesis2002} and \cite{hookUTT2002}. Pages 111 - 113 of \cite{hookUTT2002} applied uniform triadic transformations to dominant seventh chords and half-diminished seventh chords, and connected with \cite{childs} and \cite{gollin}. \cite{arnettbarth_sevenths} considered dominant, half-diminished, and minor seventh chords (36 chords in total) and introduced 5 functions between certain pairs of these classes, in such a way that three pitch classes are preserved and one pitch class moves by a semitone or tone. \cite{KerkezBridges} considered only major seventh chords in root position and minor seventh chords in reverse root position, and introduced functions $P$ and $S$ that preserve 3 out 4 notes and generate a dihedral group of order 24. \cite{CannasMCM2017} introduced a large group of order $5! \times 12 \times 4 = 11,520$ called the $PLRQ$-group, which acts (non-simply transitively) on dominant, half-diminished, minor, major, and diminished seventh chords, without consonant triads.

\cite{hook2007cross} used ``cross-type transformations'' to connect consonant triads with dominant and half-diminished sevenths in two separate generalized interval systems, and developed homomorphisms of generalized interval systems. Hook's motivating example in that paper is the omnibus progression reproduced here in Figure~\ref{fig:omnibus} (a different analysis of the omnibus progression is on page 116 of \cite{hookUTT2002}). For any major or minor triad $X$, Hook defined $L'(X)$ to be ``the unique major-minor or half-diminished seventh chord which contains all the notes of $L(X)$.''\footnote{Hook's definition of $L'$ is found in the right column of page 2 of \cite{hook2007cross}. There $X$, $L(X)$, and $L'(X)$ are implicitly each thought of as pitch-class sets rather than pitch-class segments. Recall that major-minor seventh chords are the same as dominant seventh chords.}  Hook remarks ``This $L'$ is a {\it cross-type transformation}, which is to say that it transforms objects of one type (triads) into objects of a different type (seventh chords)...''\footnote{This definition of cross-type transformation is in the left column of page 3 of \cite{hook2007cross}.}  Hook's $L'$ is indeed a homomorphism of the two generalized interval systems corresponding to the two $TI$-group actions on consonant triads and on dominant/half-diminished seventh chords.\footnote{Recall from the proof on pages 157-158 of \cite{LewinGMIT} that a generalized interval system is essentially the same as a simply transitive group action (the ``transpositions'' of the generalized interval system form the simply transitive group). Because of this correspondence, to show that $L'$ is a homomorphism of generalized interval systems, it is sufficient to observe that $L'$ is equivariant with respect to the $TI$-actions on the two sets, rather than spelling out what the interval functions are and working at the level of generalized interval systems. More precisely, we rely on the fact that the category of generalized interval systems is equivalent to the category of simply transitive group actions as proved in Proposition~2.9 of \cite{fiorenollsatyendraSchoenberg}. Both of these categories are equivalent to the category of groups with morphisms given by affine maps of groups ({\it not} just group homomorphisms), see Proposition~2.10 of \cite{fiorenollsatyendraSchoenberg} and see also \cite{kolman}.} \cite{hook2007cross} developed the theory of certain homomorphisms called {\it cross-type transposition} and {\it cross-type inversion}, and worked out reference points, label functions, and intervals as related to homomorphisms of generalized interval systems. Homomorphisms of generalized interval systems were studied earlier in \cite{kolman}, and subsequently in \cite{fiorenollsatyendraSchoenberg}.\footnote{The {\it morphisms} of generalized interval systems in \cite{fiorenollsatyendraSchoenberg} are the same as the {\it homomorphisms} of generalized interval systems discussed in \cite{kolman} and \cite{hook2007cross}.} To summarize this paragraph: our first main contribution of the present article solves the cross-type transformation problem in a different way than \cite{hook2007cross}, and we additionally find the Lewinian dual group in the present paper.

Although \cite{hook2007cross} kept consonant triads and dominant/half-diminished seventh chords in distinct generalized interval systems and connected them with a homomorphism, Hook did herald the perspective of our present article. He wrote:
\begin{quotation}
It will have occurred to many readers that one can readily form the set-theoretic union $S \cup T$ of the two sets—or more generally a single set combining objects of all the `types' under consideration—and can then
undertake to define appropriate functions on this larger set. Such mappings can then be considered single-type transformations in the Lewinian sense. In some cases this approach can be made to work, effectively circumventing the limitation of Lewin’s definition. For instance, the triad-to-seventh-chord mapping $L'$ described above may be taken together with its seventh-chord-to-triad inverse mapping to define a single function on the set of all 48 triads and seventh chords of the appropriate qualities, by means of which the omnibus analysis of Example 1 may be reconstructed (Footnote 8 on page 5 of \cite{hook2007cross}).
\end{quotation}
The present article adopts this perspective, and develops Lewinian duality and group extensions in this context.

\bigskip

{\bf Overview of section topics.}
In Section~\ref{sec:notations}, we very briefly recall the $P$, $L$, and $R$ transformations, dominant seventh chords, half-diminished seventh chords, and generalized contextual groups, and then focus on the consequences of extending $P$, $L$, and $R$ to dominant and half-diminished seventh chords via conjugation. Along the way we introduce notations for the rest of the article and shed some light on generalized contextual groups. This detailed and concrete treatment of the special case of dominant and half-diminished seventh chords is provided up front, so that later analogous arguments can be abbreviated.

Section~\ref{sec:construction_for_disjoint_union} contains our first main result, Theorem~\ref{thm:construction_of_action_on_disjoint_union}, which allows us to construct a simply transitive group action on the disjoint union of consonant triads with dominant and half-diminished sevenths in Example~\ref{examp:triadsandsevenths}. To make Theorem~\ref{thm:construction_of_action_on_disjoint_union} even more applicable, we develop several general $TI$-equivariant bijections that can be used for $f$ in the theorem: pitch segment extension, pitch segment truncation, and pitch segment modification.

Section~\ref{sec:anti-equivariant_disjoint_union} contains our second main result, Theorem~\ref{thm:anti-equivariant}, which allows us to reconstruct familiar transformational groups as internal semi-direct products, and express them as short exact sequences.

Section~\ref{sec:bijections_of_triads_and_seventh_chords} very briefly reviews major seventh chords, minor seventh chords, and diminished seventh chords, and then introduces $TI$-equivariant bijections from consonant triads to various $TI$-orbits of seventh pitch-class segments. The formulas for these $TI$-equivariant bijections are linear, and are compatible with every {\it Schritt} $Q_i$. All but the last of the bijections are compatible with contextual inversions. The purpose of this section is to develop the raw materials for making star-shaped diagrams as input to Theorem~\ref{thm:generalized}.

Section~\ref{sec:construction_for_disjoint_union_multiple} contains our third main result, Theorem~\ref{thm:generalized}, which constructs from a star-shaped diagram of equivariant bijections a simply transitive group action on the disjoint union of the sets in the diagram. The star-shaped diagram produces a ``meta-rotation'', as we can see in the rotation of the tetractys modes in Figure~\ref{fig:rotation_is_metarotation} and Example~\ref{examp:tetractys}. The main example of Theorem~\ref{thm:generalized} is the construction of a group that acts simply transitively on consonant triads and several kinds of seventh chords. Analytical examples include ``Autumn Leaves'' and dynamic voice leading of root position sevenths with the $J$ function.

All three of the main theorems include duality.

\section{Extension of the neo-Riemannian $P$, $L$, and $R$ transformations to dominant seventh chords and half-diminished seventh chords} \label{sec:notations}

In light of the motivating omnibus progression example, we now quickly recall the neo-Riemannian $P$, $L$, and $R$ transformations, set up notations for the rest of the article, remind mathematical readers of dominant seventh chords and half-diminished seventh chords, and extend $P$, $L$, and $R$ to these seventh chords via ``conjugation''. This section provides some formulas and conventions so that we can quickly apply Theorem~\ref{thm:construction_of_action_on_disjoint_union} in Examples~\ref{examp:triadsandsevenths} and \ref{examp:omnibus} to treat the motivating omnibus progression in Figure~\ref{fig:omnibus}. Moreover, working out the details for dominant seventh chords and half-diminished seventh chords in the present section will allow us to more quickly treat other kinds of sevenths in an analogous way in Sections~\ref{sec:bijections_of_triads_and_seventh_chords} and \ref{sec:construction_for_disjoint_union_multiple}.

The article \cite{cransfioresatyendra} is a self-contained mathematically-oriented exposition of Lewin's neo-Riemannian $PLR$-group. We use the same formalizations here. Recall that a {\it pitch-class segment} from musical set theory is an ordered tuple of elements of $\mathbb{Z}_{12}$, always written with angular brackets. For example the $C$ triad in root position is $\langle 0, 4, 7 \rangle$, and the $c$ triad\footnote{As usual, major triads are indicated with capital letters and minor triads are indicated with lowercase letters, here $C$ versus $c$.} in reverse root position is $\langle 7, 3, 0 \rangle$. An analyst is free to consider $\langle 0, 4, 7 \rangle$ as a representation of the $C$ chord in any voicing, or specifically root position, or as consecutive notes, or even as a stand-in for the underlying pitch-class set. Let $\triads$ be the collection of 24 consonant triads written as pitch-class segments in {\it dualistic root position}, this means major triads are written in root position order and minor triads are written in reverse root position order.
\begin{align*}
\triads &:= \{\;\langle w,\; w+4,\; w+7 \rangle \; \vert \; w \in \mathbb{Z}_{12} \;\}\; \\ &\bigcup \;\{ \; \langle w,\; w-4,\; w-7 \rangle \; \vert \; w \in \mathbb{Z}_{12} \;\}
\end{align*}
The root of a minor chord $\langle w,\; w-4,\; w-7 \rangle$ is the last entry $w-7$, {\it not} $w$.
The collection of dualistic root position triads as pitch-class segments is equal to the orbit of $C$ major under the entry-wise action of the $TI$-group, whose elements are $T_n(w) = w+n$ and $I_n(w) = -w+n$ for $n \in \mathbb{Z}_{12}$.
\begin{align*}
\triads = TI \langle 0, 4, 7 \rangle
\end{align*}

The representation of the 24 consonant triads in this way enables convenient formulas for the neo-Riemannian $P$, $L$, and $R$ transformations long understood as {\it contextual inversions}:
\begin{equation} \label{equ:plr}
\begin{aligned}
P\langle w, x, y \rangle &= I_{w+y}\langle w, x, y \rangle = \langle y,&\;      &-x+w+y,&\;   &w\rangle \phantom{.} \\
L\langle w, x, y \rangle &= I_{x+y}\langle w, x, y \rangle = \langle -w+x+y,&\; &y,&\;       &x\rangle \phantom{.} \\
R\langle w, x, y \rangle &= I_{w+x}\langle w, x, y \rangle = \langle x,&\;       &w,&\;       &-y+w+x\rangle.
\end{aligned}
\end{equation}
For these formulas, it is helpful to know that $I_{a+b}$ is the unique  inversion that exchanges $a$ and $b$, so that each of the transformations in \eqref{equ:plr} preserves two pitch-classes of the underlying input pitch-class set.\footnote{Notice these formulas in \eqref{equ:plr} do {\it not} imply that $P$, $L$, and $R$ are linear, because these formulas for $P$, $L$, and $R$ only hold when the input $\langle w, x, y\rangle$ is a consonant triad in dualistic root position. However, these formulas were adapted (non-linearly) to any voicing in \cite[Example~3.3]{fiorenollsatyendraMCM2013}. The naive extensions of the formulas in \eqref{equ:plr} to linear maps $\mathbb{Z}_{12}^{\times 3} \to \mathbb{Z}_{12}^{\times 3}$ generate a subgroup of $SL(3, \mathbb{Z}_{12})$; its structure was completely determined in \cite{fiorenoll_SIAM}.
Also notice the formulas in \eqref{equ:plr} do {\it not} say that $P$, $L$, and $R$ are inversions (reflections) $I_n$, because the subscripts $n=a+b$ in \eqref{equ:plr} are determined by the input chord.}

The equations in \eqref{equ:plr} imply
\begin{equation}
\begin{aligned}
P\langle w,\; w+4,\; w+7 \rangle &= \langle w+7,&\;      &w+3,&\;   &w\rangle \\
L\langle w,\; w+4,\; w+7 \rangle &= \langle w+11,&\; &w+7,&\;       &w+4\rangle \\
R\langle w,\; w+4,\; w+7 \rangle &= \langle w+4,&\;       &w,&\;       &w+9\rangle
\end{aligned}
\end{equation}
and similar formulas on reverse root position minor triads. The bijections $P$, $L$, and $R$ generate a subgroup of $\text{Sym}(\triads)$ called the {\it $PLR$-group}. This subgroup is dihedral of order 24, is actually generated by just $L$ and $R$, and is the centralizer of the $TI$-group (see Theorems~5.1 and 6.1 of \cite{cransfioresatyendra} for proofs of these classical results; both proofs were improved in Remark~3.10 of \cite{BerryFiore}).

We would next like to extend $P$, $L$, and $R$ via ``conjugation'' to dominant seventh chords and half-diminished seventh chords in a way that is compatible with the unique extension of a consonant triad to such a seventh, and compatible with the foregoing conventions for encodings of consonant triads. For instance, the formula for $P$ on a dominant seventh chord or half-diminished seventh chord should say something like: ``look inside the dominant/half-diminished seventh chord, find the unique consonant triad, then do the contextual inversion corresponding to $P$ on the entire seventh chord.'' We would also like to see that these extended $P$, $L$, and $R$ have parsimonious voice leading: on the underlying pitch-class sets, each fixes two pitch classes and moves the other two pitch classes by only a half step.

In a {\it dominant seventh chord} in root position, the thirds from root upwards are major third, minor third, and minor third, so for instance $C^7$ in root position is $\langle 0,4,7,10\rangle$. The notation for a dominant 7th chord is a superscript \textsuperscript{7}. Another name for a dominant seventh chord is a {\it major-minor seventh} chord because it consists of a major triad together with a minor seventh above its root. When we write dominant seventh chords as pitch-class segments in this paper, we always use root position. Every major triad sits in a unique dominant seventh chord, and every dominant seventh chord contains a unique major triad. In our pitch-class segment conventions, the contained major triad is always in the first three entries.

Any entry-wise inversion (reflection) of a dominant 7th pc-segment in root position is a {\it half-diminished seventh chord} in reverse root position. The descending intervals are major third, minor third, minor third, and the root of the half-diminished seventh chord is in the last entry. For instance, the reflection of $C^7$ over 0 is a $D\textsuperscript{\o{}7}$ in reverse root position, with root $D=2$ last.
\begin{align*}
I_0\langle 0,4,7,10 \rangle = \langle 0, 8, 5, 2\rangle
\end{align*}
The notation for a half-diminished seventh chord is a superscript \textsuperscript{\o{}7}. Another name for a half-diminished seventh chord is a {\it minor 7 flat 5 chord} because it is a minor seventh chord\footnote{Minor seventh chords are briefly reviewed in Section~\ref{sec:bijections_of_triads_and_seventh_chords}.} with the fifth lowered by a half step. In other words, a half-diminished seventh chord consists of a diminished triad together with a minor seventh above its root.
When we write half-diminished seventh chords as pitch-class segments, we always use reverse root position. Every minor triad sits in a unique half-diminished seventh chord, and every half-diminished seventh chord contains a unique minor triad. In our pitch-class segment conventions, the contained minor triad (itself in reverse root position) is always in the first three entries of its containing reverse root position half-diminished seventh chord. Our dualistic root position convention for dominant seventh chords and half-diminished seventh chords is compatible with our dualistic root position convention for consonant triads, with only one small (unavoidable) hiccup: the inclusion of a minor triad in a half-diminished seventh chord changes the root name by a descending minor third. For instance, here in the present example, $f = \langle 0, 8, 5 \rangle$ is contained in $D\textsuperscript{\o{}7} = \langle 0, 8, 5, 2\rangle$. Fortunately, the inclusion of a major triad preserves the root name. For instance, here in the present example, $C = \langle 0, 4, 7 \rangle$ is contained in $C^7 = \langle 0,4,7,10 \rangle$.
See Figure~\ref{fig:I7_commutes_with_f} for an example with $I_7$ instead of $I_0$.

Let $\dom$ be the collection of 24 dominant seventh chords and half-diminished seventh chords (i.e. 12 of each) written as pitch-class segments in {\it dualistic root position}; this means dominant seventh chords are written in root position order and half-diminished seventh chords are written in reverse root position order.
\begin{align*}
\dom &:= \{\;\langle w,\; w+4,\; w+7,\; w+10 \rangle \; \vert \; w \in \mathbb{Z}_{12} \;\}\; \\&\bigcup \;\{ \; \langle w,\; w-4,\; w-7,\; w-10 \rangle \; \vert \; w \in \mathbb{Z}_{12} \;\}
\end{align*}
The root of a half-diminished seventh chord $\langle w,\; w-4,\; w-7,\; w-10 \rangle$ is of course the last entry $w-10$, {\it not} $w$.
The collection of dualistic root position dominant sevenths and half-diminished sevenths as pitch-class segments is equal to the orbit of $C^7$ under the entry-wise action of the $TI$-group.
\begin{align*}
\dom = TI \langle 0, 4, 7, 10 \rangle
\end{align*}

\begin{proposition} \label{prop:TI-equivariance_contextual_extension}
The bijection
\begin{equation} \label{equ:bijection_triads_domhalfdiminished}
\begin{aligned}
\xymatrix{f \colon \; \text{\rm \triads} \ar[r] & \text{\rm \dom}} \\
\xymatrix{\langle w,\; w+4,\; w+7\rangle \ar@{|->}[r] & \langle w,\; w+4,\; w+7,\; w+10 \rangle} \\
\xymatrix{ \langle w,\; w-4,\; w-7\rangle \ar@{|->}[r] &\langle w,\; w-4,\; w-7,\; w-10 \rangle }
\end{aligned}
\end{equation}
is $TI$-equivariant. In fact, this bijection is uniformly defined on both majors and minors with the single ``contextual extension'' formula
\begin{align} \label{equ:2y-x}
f\langle w,\; x,\; y \rangle = \langle w,\; x,\; y,\; 2y-x \rangle,
\end{align}
from which $TI$-equivariance follows.
\end{proposition}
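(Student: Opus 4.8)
The plan is to establish the uniform formula \eqref{equ:2y-x} first, and then read off $TI$-equivariance directly from the shape of that formula. For the uniform formula I would simply substitute the two parametrized families into $2y-x$: for a major triad $\langle w,\,w+4,\,w+7\rangle$ one has $x=w+4$, $y=w+7$, so $2y-x = 2(w+7)-(w+4) = w+10$; for a reverse root position minor triad $\langle w,\,w-4,\,w-7\rangle$ one has $x=w-4$, $y=w-7$, so $2y-x = 2(w-7)-(w-4) = w-10$. Hence $f\langle w,x,y\rangle = \langle w,x,y,2y-x\rangle$ reproduces the piecewise definition \eqref{equ:bijection_triads_domhalfdiminished} on both majors and minors, and in particular its image lies in \dom.

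The key structural observation is that every element of the $TI$-group acts on pitch-class segments coordinate-wise through an affine map $z \mapsto \varepsilon z + n$ with $\varepsilon \in \{+1,-1\}$ and $n \in \mathbb{Z}_{12}$ ($\varepsilon = +1$ for $T_n$, $\varepsilon = -1$ for $I_n$). Now any $\mathbb{Z}_{12}$-linear form $\varphi(w,x,y) = aw+bx+cy$ whose coefficients satisfy $a+b+c=1$ is itself equivariant for this coordinate-wise action, in the precise sense that $\varphi(\varepsilon w + n,\, \varepsilon x + n,\, \varepsilon y + n) = \varepsilon\,\varphi(w,x,y) + n$, because the additive constants collect as $(a+b+c)n = n$. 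The fourth coordinate of $f$ is exactly such a form, namely $\varphi = 2y - x$ with $(a,b,c) = (0,-1,2)$ and $0 + (-1) + 2 = 1$; the first three coordinates are the coordinate projections, which are the forms with coefficient vectors $(1,0,0)$, $(0,1,0)$, $(0,0,1)$, again each of coefficient sum $1$.

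Combining these two points yields equivariance immediately: for $g \in TI$ acting as $z \mapsto \varepsilon z + n$, the segment $f(g\cdot\langle w,x,y\rangle)$ has first three entries $\varepsilon w + n,\ \varepsilon x + n,\ \varepsilon y + n$ and fourth entry $\varphi(\varepsilon w + n,\, \varepsilon x + n,\, \varepsilon y+n) = \varepsilon(2y-x) + n$, which is precisely $g$ applied coordinate-wise to $\langle w,x,y,2y-x\rangle = f\langle w,x,y\rangle$. For completeness I would also note that $f$ is a bijection: it is injective since the first three entries of $f\langle w,x,y\rangle$ return the input, and it is surjective onto \dom{} because the two parametrized families comprising \triads{} are carried onto the two parametrized families defining \dom, as computed in the first paragraph.

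I do not expect a genuine obstacle here; the one point to handle with care is verifying that the coefficient-sum condition $a+b+c=1$ delivers equivariance for the inversions $I_n$ (the $\varepsilon=-1$ case) and not merely for the transpositions $T_n$ — this is exactly where summing to $1$, rather than to $0$, is essential. It is worth remarking that this same principle, appending to the identity map any coordinate given by a linear form with coefficient sum $1$, is what will make the later $TI$-equivariant extension, truncation, and modification bijections of Section~\ref{sec:bijections_of_triads_and_seventh_chords} work, so the proof is best framed to make that reusable.
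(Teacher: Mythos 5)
Your proof is correct and follows essentially the same route as the paper's: verify the uniform formula $2y-x$ on both families, then deduce $TI$-equivariance from the fact that each coordinate is a linear form whose coefficients sum to $1$ (the paper checks $T_n$ via $2n-n=n$ and handles $I_n$ by linearity and $I_n=T_n\circ I_0$, which is the same computation you package as a single affine identity). Your closing remark is also on target, since the paper invokes exactly this coefficient-sum principle again for the bijections in Table~\ref{table:seventhchord_functions}.
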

\begin{proof}
The expression $2y-x$ in \eqref{equ:2y-x} is discovered by attempting to write 10 in terms of 4 and 7 modulo 12. This is
\begin{align*}
10 \equiv 2 \times 7 - 4
\end{align*}
so that on a major triad
\begin{align*}
2y-x &= 2(w+7) - (w+4) \\
&=  w+10\phantom{,}
\end{align*}
and on a minor triad
\begin{align*}
2y-x &= 2(w-7) - (w-4) \\
&= w-10,
\end{align*}
so $f$ is equal to the uniform formula in \eqref{equ:2y-x}.

To confirm the $TI$-equivariance of $f$, it is sufficient to confirm $2y-x$ is $TI$-equivariant, because the first three entries of $f$ are identities. For transpositions, we have
\begin{align*}
2(y+n) - (x+n) &= 2y+ 2n - x - n \\
&= 2y-x +n
\end{align*}
The scalar compatibility of the linear expression $2y-x$ implies compatibility with $I_0$, so we also have equivariance with respect to all $I_n = T_n \circ I_0.$
\end{proof}

The $TI$-equivariant bijection $f$ in \eqref{equ:bijection_triads_domhalfdiminished} and \eqref{equ:2y-x}
allows us to define $P$, $L$, and $R$ on $\dom$ as $fPf^{-1}$, $fLf^{-1}$, and $fRf^{-1}$. In fact, the entire $PLR$-group acts on $\dom$ in this way through conjugation. The conjugation of {\it Schritt} transformations $Q_n$ are again {\it Schritt} transformations, but conjugated $P$, $L$, and $R$ require more discussion.  Their $f$-conjugated action is tantamount to: ``look inside the seventh chord, find the unique consonant triad, then do the corresponding contextual inversion on the entire seventh chord.''

\begin{proposition} \label{prop:plr_conjugated_action}
Let $f$ be as in Proposition~\ref{prop:TI-equivariance_contextual_extension}. The $f$-conjugations of $P$, $L$, and $R$ on {\rm $\dom$} are the same contextual inversions as on {\rm $\triads$}. This means for $\langle w, x, y, z \rangle \in \text{\rm \dom}$,
\begin{equation*}
\begin{aligned}
fPf^{-1}\langle w, x, y, z \rangle &= I_{w+y}\langle w, x, y, z \rangle = \langle y,&\;      &-x+w+y,&\;   &w,&\;  &-z+w+y\rangle \phantom{.} \\
fLf^{-1}\langle w, x, y, z \rangle &= I_{x+y}\langle w, x, y, z \rangle = \langle -w+x+y,&\; &y,&\;       &x,&\;   &-z+x+y\rangle \phantom{.} \\
fRf^{-1}\langle w, x, y, z \rangle &= I_{w+x}\langle w, x, y, z \rangle = \langle x,&\;       &w,&\;       &-y+w+x,&\; &-z+w+x\rangle.
\end{aligned}
\end{equation*}

We write $P$, $L$, and $R$ also for these conjugated actions, so the above equations imply

\begin{equation*}
\begin{aligned}
P\langle w,\; \textcolor{red}{w+4},\; w+7,\; \textcolor{blue}{w+10} \rangle &= \langle w+7,&\;      &\textcolor{red}{w+3},&\;   &w,&\;   &\textcolor{blue}{w-3}\rangle \\
L\langle \textcolor{red}{w},\; w+4,\; w+7,\; \textcolor{blue}{w+10} \rangle &= \langle \textcolor{blue}{w+11},&\; &w+7,&\;       &w+4,&\;   &\textcolor{red}{w+1}\rangle \\
R\langle w,\; w+4,\; \textcolor{red}{w+7},\; \textcolor{blue}{w+10} \rangle &= \langle w+4,&\;       &w,&\;       &\textcolor{blue}{w+9},&\;   &\textcolor{red}{w+6}\rangle.
\end{aligned}
\end{equation*}
Similar formulas on reverse root position half-diminished seventh chords are also visible here, using the fact that $P$, $L$, and $R$ are self-inverse.

Moreover, at the level of pitch-class sets, each of $P$, $L$, and $R$ preserves two pitch classes and moves two pitch classes by a half step. For each of the these three transformations, the two $\textcolor{red}{red}$ pitch classes indicate motion by a semitone.
Similarly, for each of the these three transformations, the two  $\textcolor{blue}{blue}$ pitch classes indicate motion by a semitone. The black pitches are stationary (i.e. exchanged by the contextually selected inversion).
\end{proposition}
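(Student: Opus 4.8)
The plan is to exploit the fact that, on $\dom$, the bijection $f^{-1}$ is simply truncation of the last entry. Indeed, by the computation in the proof of Proposition~\ref{prop:TI-equivariance_contextual_extension}, every element $\langle w, x, y, z\rangle$ of $\dom$ satisfies $z = 2y-x$ (on a dominant seventh $\langle w,w+4,w+7,w+10\rangle$ one has $2(w+7)-(w+4)=w+10$, and on a half-diminished seventh $\langle w,w-4,w-7,w-10\rangle$ one has $2(w-7)-(w-4)=w-10$), so that $f^{-1}\langle w,x,y,z\rangle=\langle w,x,y\rangle\in\triads$. Consequently
\[
fPf^{-1}\langle w,x,y,z\rangle = f\bigl(P\langle w,x,y\rangle\bigr),
\]
and likewise for $L$ and $R$. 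This reduces everything to understanding $f$ applied to the triad $P\langle w,x,y\rangle$.

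Next I would freeze the input triad $v=\langle w,x,y\rangle$ and use that $P$, although not itself an inversion, acts on this particular $v$ as the inversion $I_{w+y}$, i.e.\ $P(v)=I_{w+y}(v)$ inside $\triads$ by formula~\eqref{equ:plr}. Since $P$ maps $\triads$ into $\triads$, the triad $I_{w+y}(v)$ lies in the domain of $f$, so the $TI$-equivariance of $f$ from Proposition~\ref{prop:TI-equivariance_contextual_extension} gives
\[
f\bigl(P\langle w,x,y\rangle\bigr)=f\bigl(I_{w+y}\langle w,x,y\rangle\bigr)=I_{w+y}\bigl(f\langle w,x,y\rangle\bigr)=I_{w+y}\langle w,x,y,z\rangle .
\]
Applying $I_{w+y}(t)=-t+w+y$ entrywise now yields $\langle y,\ -x+w+y,\ w,\ -z+w+y\rangle$, the claimed formula for $fPf^{-1}$. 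The formulas for $fLf^{-1}$ and $fRf^{-1}$ follow in exactly the same way, replacing the contextually selected subscript $w+y$ by $x+y$ and by $w+x$ respectively, as dictated by~\eqref{equ:plr}.

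The concrete formulas on root position dominant sevenths are then a one-line substitution: for $P$ the subscript is $w+(w+7)=2w+7$, and $I_{2w+7}$ sends $w\mapsto w+7$, $w+4\mapsto w+3$, $w+7\mapsto w$, $w+10\mapsto w-3$; the cases of $L$ (subscript $2w+11$) and $R$ (subscript $2w+4$) are identical substitutions. The corresponding formulas on reverse root position half-diminished sevenths require no new computation: the conjugated $P$, $L$, $R$ are involutions of $\dom$, being $f$-conjugates of the involutions $P$, $L$, $R$ of $\triads$, so each displayed formula read right-to-left gives the action on the half-diminished seventh appearing on its right-hand side.

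Finally, for the parsimony statement I would compare underlying pitch-class sets directly. For $P$, the input pc-set $\{w,\ w+4,\ w+7,\ w+10\}$ and the output pc-set $\{w+7,\ w+3,\ w,\ w-3\}$ share exactly $w$ and $w+7$ --- which are precisely the two pitch classes exchanged by the contextual inversion $I_{2w+7}$, hence black --- while $w+4$ moves to $w+3$ and $w+10$ moves to $w-3\equiv w+9$, each by a semitone, matching the red and blue pairs; the arguments for $L$ and $R$ are the same, with the preserved pair being $\{w+4,w+7\}$ and $\{w,w+4\}$ respectively. I do not anticipate a real obstacle here; the only point deserving care is the ``frozen subscript'' step --- one must remember that the subscript in $I_{w+y}$ from~\eqref{equ:plr} is determined by the specific input chord, so that equivariance of $f$ is invoked for a fixed inversion operator rather than for the (non-linear) map $P$ itself --- together with the initial observation that $f^{-1}$ really is truncation on all of $\dom$, which is what makes the conjugation collapse so cleanly.
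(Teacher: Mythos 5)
Your proposal is correct and follows essentially the same route as the paper's own proof: observe $z=2y-x$ so that $f^{-1}$ truncates, apply the contextual-inversion formula to the truncated triad, and then use the $TI$-equivariance of $f$ for the \emph{fixed} inversion $I_{w+y}$ (resp.\ $I_{x+y}$, $I_{w+x}$) to pull it outside $f$. The extra verifications you include (the explicit substitutions, the involution argument for half-diminished chords, and the pc-set comparison for parsimony) are consistent with, and merely spell out, what the paper leaves as routine.
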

\begin{proof}
Let $\langle w, x, y, z \rangle$ be a dominant seventh chord or half-diminished seventh chord in dualistic root position, which implies $z = 2y-x$. Then
\begin{align*}
P\langle w,x,y,z\rangle &\overset{\text{def}}{=} fPf^{-1} \langle w,x,y,z\rangle   \\
&= fP\langle w, x, y \rangle \\
&= f I_{w+y} \langle w, x, y \rangle \\
&= I_{w+y} f \langle w, x, y \rangle \\
&= I_{w+y} \langle w,\; x,\; y,\; 2y -x\rangle \\
&= I_{w+y} \langle w,\; x,\; y,\; z\rangle.
\end{align*}
The proofs for $L$ and $R$ are completely analogous.
\end{proof}

\begin{example}
For $C^7$ = $\langle 0,4,7,10 \rangle$, Proposition~\ref{prop:plr_conjugated_action} says
\begin{align*}
fPf^{-1}\langle 0,4,7,10 \rangle &= \langle 7,3,0,9\rangle = A\textsuperscript{\o{}7}\\
fLf^{-1}\langle 0,4,7,10 \rangle &= \langle 11,7,4,1\rangle = C\sh\textsuperscript{\o{}7} \\
fRf^{-1}\langle 0,4,7,10 \rangle &= \langle 4,0,9,6\rangle = F\sh\textsuperscript{\o{}7}
\end{align*}
The roots are 3, 11, and 6 half-steps below the original root $C$. The operation which interchanges dominant seventh and half-diminished seventh chords and keeps the root the same is {\it not} $fPf^{-1}$, but is $K_{1,4}$ defined via equation \eqref{equ:contextualinversion_Kij_definition} and is in the group generated by the $f$-conjugated $P$, $L$, and $R$, as we will see in a moment in Remark~\ref{rem:other_contextual_inversions}.
\end{example}

We next make a few comments about Proposition~\ref{prop:plr_conjugated_action} concerning the isomorphic image of the $PLR$-group under $f$-conjugation, contextual inversions, and the voice leading properties of the $f$-conjugated $P$, $L$, and $R$.

\begin{remark} \label{rem:PLRconjugation_is_gcg}
The subgroup of $\text{Sym}(\dom)$ generated by the conjugates $$fPf^{-1}, \quad fLf^{-1}, \quad \text{and} \quad fRf^{-1}$$ from Proposition~\ref{prop:plr_conjugated_action} is equal to the generalized contextual group associated to the pitch-class segment $\langle 0,4,7,10\rangle$, as we now prove concretely with generators and relations.\footnote{We can also prove the $f$-conjugation of the $PLR$-group is equal to the generalized contextual group associated to $\langle 0, 4, 7, 10\rangle$ theoretically. Namely, $f$ commutes with the $TI$-group, as do $P$, $L$, and $R$, so their $f$-conjugates do as well, and so does the $f$-conjugation of the entire $PLR$-group. Thus the 24-element $f$-conjugation of the $PLR$-group is contained in the 24-element dual group of the $TI$-group acting on its orbit of $\langle 0,4,7,10\rangle$. But the dual group of the $TI$-group acting on its orbit of $\langle 0,4,7,10\rangle$ is equal to the generalized contextual group associated to $\langle 0,4,7,10\rangle$, so the conjugated $PLR$-group is contained in the generalized contextual group associated to $\langle 0,4,7,10\rangle$, and has the same size. Thus the two groups are equal.} The theory of generalized contextual groups was introduced in \cite{fioresatyendra2005}. The generalized contextual group associated to  $\langle 0,4,7,10\rangle$ is the subgroup
\begin{equation*}
\langle Q_1,\; K_{1,2} \rangle \leq \text{Sym}(\dom)
\end{equation*}
with generators $Q_1$ and $K_{1,2}$ defined on a dominant seventh chord or half-diminished seventh chord $Y=\langle y_1,\, y_2,\, y_3,\, y_4\rangle$ in dualistic root position as
\begin{equation*}
Q_1(Y) :=
\begin{cases}
T_1(Y) & \text{if $Y$ is a dominant seventh chord} \\
T_{-1}(Y) & \text{if $Y$ is a half-diminished seventh chord}
\end{cases}
\end{equation*}
and
\begin{equation*}
K_{1,2}(Y) := I_{y_1+y_2}(Y).
\end{equation*}
We know $Q_5 = f LR f^{-1}$ from the proof of Theorem~5.1 in \cite{cransfioresatyendra}, and
\begin{align*}
Q_1 &= (Q_5)^5 \\
&= \left( (fLf^{-1})(fRf^{-1})\right)^5,
\end{align*}
and we also know $K_{1,2}=fRf^{-1}$ from Proposition~\ref{prop:plr_conjugated_action}, so
$$\langle Q_1,\, K_{1,2} \rangle \leq \langle fLf^{-1},\, fRf^{-1} \rangle.$$
On the other hand,
\begin{align*}
fLf^{-1} & =  (fLRf^{-1}) (fRf^{-1}) \\
&= Q_5(fRf^{-1})\\
&= (Q_1)^5K_{1,2},
\end{align*}
so $$\langle Q_1,\, K_{1,2} \rangle \geq \langle fLf^{-1},\, fRf^{-1} \rangle.$$
Lastly recall that the $PLR$-group is generated by just $L$ and $R$, so we now have that the $f$-conjugation of the $PLR$-group is equal to the generalized contextual group associated to the pitch-class segment $\langle 0 , 4, 7 , 10 \rangle$. Proposition~\ref{prop:plr_conjugated_action} and this present remark foreshadow Examples~\ref{examp:triadsandsevenths} and \ref{examp:generalizedcontextualgroup}, as well as the good behavior of the bijections from consonant triads to major seventh chords and minor seventh chords in Section~\ref{sec:bijections_of_triads_and_seventh_chords} in Table~\ref{table:seventhchord_functions}.
\end{remark}

\begin{remark} \label{rem:diminished_sevenths_mystery_solution}
The fact that the generalized contextual groups associated to $\langle 0,4,7\rangle$ and $\langle 0,4,7,10 \rangle$ are generated by contextual inversions without the need for $Q_1$ is quite special amongst generalized contextual groups. The equation
$$m(7-0)+n(7-4) \equiv 1 \text{ mod }12$$
has solution $m = 7$, $n=0$ in $\mathbb{Z}_{12}$, so $Q_1$ on $TI\langle 0,4,7\rangle$ can be expressed in terms of contextual inversions as a consequence of Theorem~3.3.2(5) of \cite{fiorenoll_SIAM} (or by direct computation).  In contrast, the generalized contextual group associated to the diminished triad $\langle 1, 4, 7\rangle$ is not generated by contextual inversions alone: the equation
$$m(7-1)+n(7-4) = 3(2m+n) \equiv 1 \text{ mod }12$$
has no solutions $m,n \in \mathbb{Z}_{12}$, so $Q_1$ on $TI\langle 1,4,7\rangle$ cannot be expressed in terms of contextual inversions (again we are using Theorem~3.3.2(5) of \cite{fiorenoll_SIAM}). Appending 10 to $\langle 1,4,7\rangle$ does not create any new interval classes in the pc-seg $\langle 1,4,7,10\rangle$, so $Q_1$ on $TI\langle 1,4,7,10\rangle$ cannot be expressed in terms of contextual inversions either (we apply Theorem~3.3.2(5) of \cite{fiorenoll_SIAM} to every sub-3-tuple of $\langle 1,4,7,10\rangle$). Thus, Proposition~\ref{prop:plr_conjugated_action} does not hold for the (still $TI$-equivariant) slight modification of $f$ that goes into diminished seventh chords:
\begin{equation*}
\begin{aligned}
\xymatrix{f_\text{mod} \colon \; \text{\rm \triads} \ar[r] & \text{\rm \diminishedsevenths}} \\
\xymatrix{\langle w,\; w+4,\; w+7\rangle \ar@{|->}[r] & \langle w+1,\; w+4,\; w+7,\; w+10 \rangle \phantom{.}} \\
\xymatrix{ \langle w,\; w-4,\; w-7\rangle \ar@{|->}[r] &\langle w-1,\; w-4,\; w-7,\; w-10 \rangle .}
\end{aligned}
\end{equation*}
In more detail: conjugation of the $PLR$-group by this $TI$-equivariant $f_\text{mod}$ does indeed provide an isomorphism onto the dual group of the $TI$-group acting on $TI\langle 0,4,7,10\rangle$ (without use of modified Proposition~\ref{prop:plr_conjugated_action}), and this dual group is equal to the generalized contextual group of $\langle 0 , 4, 7, 10 \rangle$, so if Proposition~\ref{prop:plr_conjugated_action} did hold for this slightly modified $f$, that would mean the generalized contextual group of $\langle 1, 4, 7, 10 \rangle$ is generated by contextual inversions alone, which it is not. We can also directly compute that the conjugation of $P$ by $f_\text{mod}$ is not a contextual inversion (similar computation for the conjugations of other contextual inversions).
\begin{align*}
    f_\text{mod}Pf^{-1}_\text{mod} \langle w+1,\, w+4,\, w+7,\, w+10 \rangle & = f_\text{mod}P \langle w,\, w+4,\, w+7 \rangle \\
    & = f_\text{mod} \langle w+7,\, w+3,\, w \rangle \\
    & = \langle w+6,\, w+3,\, w,\, w-3 \rangle
\end{align*}
We see the input and final output pitch-class segments have not even a single pitch class in common, so $f_\text{mod}Pf^{-1}_\text{mod}$ is not a contextual inversion (though is in the generalized contextual group).

This extended remark about the diminished seventh chord pitch-class segment $\langle 1, 4, 7, 10 \rangle$ foreshadows the special case of the bijection from consonant triads to diminished seventh chords in Section~\ref{sec:bijections_of_triads_and_seventh_chords} in Table~\ref{table:seventhchord_functions}. See also Example~\ref{examp:alteration_diminished_sevenths}.
\end{remark}

We next make a few observations about the moving pitch-classes in Proposition~\ref{prop:plr_conjugated_action}.

\begin{remark}
In Proposition~\ref{prop:plr_conjugated_action}, in the $P$ transformation on dominant seventh chords and half-diminished seventh chords, the two red pitch classes have the same index, as do the two blue pitch classes. This is not the case for $L$ and $R$. Another way $P$ distinguishes itself is that in $P$, one of the motion pairs (\textcolor{red}{$w+4$} and \textcolor{red}{$w+3$}) is the same motion pair that $P$ does on the underlying major and minor triads. This is not the case for $L$ and $R$. In $L$ on consonant triads, \textcolor{red}{$w$} is matched with \textcolor{blue}{$w+11$}, but on sevenths, $L$ matches \textcolor{red}{$w$} with \textcolor{red}{$w+1$} so that \textcolor{blue}{$w+11$} is free to match with \textcolor{blue}{$w+10$}, and we get two pairs of semitonal motion inside $L$. In $R$ on consonant triads, \textcolor{red}{$w+7$} is matched with \textcolor{blue}{$w+9$} via a whole step, but on sevenths, $R$ matches \textcolor{red}{$w+7$} with \textcolor{red}{$w+6$} so that \textcolor{blue}{$w+9$} is free to match with \textcolor{blue}{$w+10$}, and we get two pairs of semitonal motion inside $R$. It will be an interesting research question to incorporate permutations (as in \cite{fiorenollsatyendraMCM2013}) into the theory of the present paper, and to determine whether or not the Cohn group\footnote{The {\it Cohn group} is generated by $P'$, $L'$, and $R'$, which are certain composites of conjugation-extended $P$, $L$, and $R$ with dual permutations. These variants are defined so that preserved pitch-classes stay in the same voice, i.e. $P'\langle 0,4,7 \rangle = \langle 0, 3, 7 \rangle$, and $L'\langle 0,4,7 \rangle = \langle 11, 4, 7 \rangle$, and $R'\langle 0,4,7 \rangle = \langle 0, 4, 9 \rangle$. See Examples~3.3 and 3.4 of \cite{fiorenollsatyendraMCM2013}. The Cohn group is {\it not} dihedral of order 24, as incorrectly claimed in Example~3.4 of that paper. The Cohn group actually has 72 elements. We thank Dmitri Tymoczko for notifying us of the correction to that 2013 paper.} of \cite{fiorenollsatyendraMCM2013} can be extended in a way that resolves the changed voice leading matching when $P$, $L$, and $R$ are extended to dominant seventh chords and half-diminished chords via conjugation with \eqref{equ:bijection_triads_domhalfdiminished} and \eqref{equ:2y-x}.
\end{remark}

\begin{remark}
In all three transformations, the two moving voices move in parallel motion; however, in $P$ and $R$ the motion is downward, while in $L$ the motion is upward.
\end{remark}

\begin{remark} \label{rem:other_contextual_inversions}
In Proposition~\ref{prop:plr_conjugated_action}, we see that $P$, $L$, and $R$ on $\dom$ are contextual inversions that exchange the pitch classes at three pairs of pre-selected indices. But in a tuple with four entries, there are of course $4!/(2! \cdot 2!)=6$ possible pairs that could be selected. Where are the other three contextual inversions $K_{1,4}$, $K_{2,4}$, and $K_{3,4}$? Here $K_{i,j}(Y)$ is the reflection of $Y$ that interchanges $y_i$ and $y_j$, this is
\begin{equation} \label{equ:contextualinversion_Kij_definition}
K_{i,j}\langle y_1,\, y_2,\, y_3,\, y_4\rangle \overset{\text{def}}{=} I_{y_i+y_j}\langle y_1,\, y_2,\, y_3,\, y_4 \rangle.
\end{equation}
For instance, $K_{1,4}$ maps a dominant seventh chord to the half-diminished seventh chord with the same root, and vice versa, so $K_{1,4}(C^7)=C\textsuperscript{\o{}7}$ and $K_{1,4}(C\textsuperscript{\o{}7})=C^7$. \\
Answer: these other three contextual inversions are already in the subgroup of
$$\text{Sym}(\dom)$$ generated by $fPf^{-1}$, $fLf^{-1}$, and $fRf^{-1}$, because this subgroup is the generalized contextual group of $\langle 0 , 4, 7, 10 \rangle$ by Remark~\ref{rem:PLRconjugation_is_gcg}. In particular, we rely here on the fact that every contextual inversion $K_{i,j}$ on the $TI$-class of a pitch-class segment satisfying the tritone condition is automatically in the associated generalized contextual group by Corollary~4.4 of \cite{fioresatyendra2005}. Since the contextual inversions $K_{1,4}$, $K_{2,4}$, and $K_{3,4}$ are in the subgroup generated by $fPf^{-1}$, $fLf^{-1}$, and $fRf^{-1}$,
{\it each} can be expressed in six ways: as $fPf^{-1}Q_i$, or $fLf^{-1}Q_j$, or $fRf^{-1}Q_k$ (for respective $i$, $j$, and $k$) or as the inverses of these. In fact, for $K_{1,4}$ we have
$$K_{1,4}=fPf^{-1}Q_3 \quad \quad \quad K_{1,4}=fLf^{-1}Q_{11} \quad \quad \quad K_{1,4}=fRf^{-1}Q_6.$$

\end{remark}

\begin{notation} \label{not:bar_notation}
For added clarity, sometimes we use an overline to denote the action of $P$, $L$, or $R$ combined with its $f$-conjugate on the disjoint union
$$\triads \bigsqcup \dom.$$
For instance, $\overline{L}$ means $L$ on consonant triads and $\overline{L}$ means $fLf^{-1}$ on dominant seventh chords and half-diminished chords.
\end{notation}

\section{Construction of a simply transitive group action on a disjoint union: an equivariant bijection produces an internal direct product} \label{sec:construction_for_disjoint_union}


Recall the motivating example from the introduction: we would like to extend the simply transitive actions of the $TI$-group
and $PLR$-group on consonant triads to simply transitive actions on the union of consonant triads with dominant seventh chords and half-diminished chords. Simultaneously, we want those extended groups to extend the duality between the $TI$-group and the $PLR$-group. This extended duality allows a unified network for the omnibus progression in Figure~\ref{fig:omnibus}. Another outcome of this extended duality is new flip-flop cycles of transformations and chords.\footnote{The notion of flip-flop cycle was introduced in \cite{cloughFlipFlop}. Earlier flip-flop cycles from the neo-Riemannian $PLR$-group have been a rich source of examples, for instance the $PL$-cycles and hexatonic cycles of Cohn, see \cite{cohn1996} and \cite{cohn1997}.}

The following theorem constructs an extended duality from an equivariant bijection of simply transitive group actions, crafted for the motivating example. However, we would like to be able to apply the construction to a group action that is not simply transitive and not already part of a duality, so parts \ref{thm:construction_of_action_on_disjoint_union:i:equivariance}, \ref{thm:construction_of_action_on_disjoint_union:ii:internal_direct_product}, and \ref{thm:construction_of_action_on_disjoint_union:iii:short_exact_sequence} are formulated to merely require a faithful group action, while the subsequent parts \ref{thm:construction_of_action_on_disjoint_union:iv:simple_transitivity}, \ref{thm:construction_of_action_on_disjoint_union:v:H}, \ref{thm:construction_of_action_on_disjoint_union:vi:hbar} require simple transitivity. All group actions that we know of in mathematical music theory are faithful. Recall that a group action\footnote{All group actions we consider in this paper are {\it left} group actions, so we usually leave off the adjective {\it left}.} of a group $G$ on a set $S$ is {\it faithful} if for any $g_0 \in G$ such that $g_0s = s$ for all $s \in S$ we can conclude $g_0=e_G$. Faithfulness is equivalent to requiring the homomorphism
\begin{align*}
G \to \text{Sym}(S) \\
g \mapsto \left( s \mapsto gs \right)
\end{align*}
induced by the action  to be injective. Thus, any group that acts faithfully sits inside of the relevant permutation group. Every simply transitively group action is faithful, but faithfulness is more general than simply transitivity.

In Theorem~\ref{thm:construction_of_action_on_disjoint_union}, $f$ can be built from a pitch-class segment extension, a pitch-class segment truncation, or a pitch-class segment modification, as we explain in Example~\ref{examp:generalizedcontextualgroup}, Theorem~\ref{thm:conjugation_with_extension}, Theorem~\ref{thm:modifications}, Example~\ref{examp:strides_and_strains}, and Example~\ref{examp:alteration_diminished_sevenths}.

Later in Section~\ref{sec:construction_for_disjoint_union_multiple} we will extend this theorem to more than two sets.

\begin{theorem}[Construction of an Extension from an Equivariant Bijection of Faithful Group Actions, and Extension of Duality in Simply Transitive Cases]
\label{thm:construction_of_action_on_disjoint_union}
Suppose a group $G$ acts faithfully on two disjoint sets $S_1$ and $S_2$ and suppose $f\colon \; S_1 \to S_2$ is a $G$-equivariant bijection. Let $\overline{S}:=S_1 \bigsqcup  S_2$ and define $\overline{f}\colon \; \overline{S} \to \overline{S}$ by
$$\xymatrix@C=3pc{ S_1 \bigsqcup S_2  \ar[r]_{f \bigsqcup f^{-1}} \ar@/^1pc/[rr]^{\overline{f}} & \ar@{=}[r] S_2 \bigsqcup S_1 & S_1 \bigsqcup S_2 }.$$
Let every $g$ in $G$ act on the disjoint union $\overline{S}$ as $g \bigsqcup  g$, so that $G$ also acts on $\overline{S}$ faithfully and we may consider $G$ as a subgroup of $\text{\rm Sym}(\overline{S})$ via the associated embedding $G \hookrightarrow \text{\rm Sym}(\overline{S})$. Let $\overline{G}:=\langle G, \overline{f} \rangle$, generated inside of $\text{\rm Sym}(\overline{S})$. Then the following statements hold.
\begin{enumerate}
    \item \label{thm:construction_of_action_on_disjoint_union:i:equivariance}
    $\overline{f}$ is a $G$-equivariant involution.
    \item \label{thm:construction_of_action_on_disjoint_union:ii:internal_direct_product}
    $G$ and $\langle\overline{f}\rangle$ commute in $\text{\rm Sym}(\overline{S})$, and $\overline{G}$ is an internal direct product of $G$ and $\langle\overline{f}\rangle$. Moreover, $\overline{G}$ is the disjoint union $$\overline{G}=G \cup \overline{f}G=G \cup G\overline{f}$$ where $G$ is exactly the set of those transformations in $\overline{G}$ that preserve $S_1$ and $S_2$, and the coset $\overline{f}G=G\overline{f}$ is exactly the set of those transformations in $\overline{G}$ that exchange $S_1$ and $S_2$.
    \item \label{thm:construction_of_action_on_disjoint_union:iii:short_exact_sequence}
    The groups $G$, $\overline{G}$, and $\langle \overline{f} \rangle$ fit into the following short exact sequence, where the third homomorphism $\overline{G} \to \langle \overline{f} \rangle$ is $g \overline{f}^i \mapsto \overline{f}^i$.
    $$\{Id_{\overline{S}}\}\longrightarrow G\longrightarrow \overline{G}\longrightarrow \langle \overline{f}\rangle\longrightarrow \{Id_{\overline{S}}\}$$
    This short exact sequence is left split, so is isomorphic to the direct product short exact sequence of $G$ and $\langle \overline{f} \rangle$, which we already know from part \ref{thm:construction_of_action_on_disjoint_union:ii:internal_direct_product}.
    \item  \label{thm:construction_of_action_on_disjoint_union:iv:simple_transitivity}
    If the action of $G$ on $S_1$ is simply transitive, then the action of $G$ on $S_2$ is also simply transitive, and moreover the action of $\overline{G}$ on the disjoint union $\overline{S}$ is also simply transitive.
    \item
    \label{thm:construction_of_action_on_disjoint_union:v:H}
    Suppose the action of $G$ on $S_1$ is simply transitive. Let $H$ be the centralizer of $G$ in $\text{\rm Sym}(S_1)$. Then $H$ is the Lewin dual group of $G$. Let $H$ act on $S_2$ via its embedded copy $fHf^{-1} \leqslant \text{\rm Sym}(S_2)$, so that $f$ is $H$-equivariant. Then parts \ref{thm:construction_of_action_on_disjoint_union:i:equivariance}, \ref{thm:construction_of_action_on_disjoint_union:ii:internal_direct_product}, \ref{thm:construction_of_action_on_disjoint_union:iii:short_exact_sequence}, and \ref{thm:construction_of_action_on_disjoint_union:iv:simple_transitivity} apply to $H$ and $f$, and moreover $\overline{H}:=\langle  H,\overline{f}\rangle$ is the centralizer of $\overline{G}$ in $\text{\rm Sym}(\overline{S})$, so $\overline{H}$ and $\overline{G}$ are dual groups in $\text{\rm Sym}(\overline{S})$.
    \begin{enumerate}
    \item[(i)]
    $\overline{f}$ is $H$-equivariant.
    \item[(ii)]
    $H$ and $\langle \overline{f} \rangle$ commute in $\text{\rm Sym}(\overline{S})$, and $\overline{H}$  is an internal direct product of $H$ and $\langle \overline{f} \rangle$. Moreover, $\overline{H}$ is the disjoint union $$\overline{H}=H \cup \overline{f}H=H \cup H\overline{f}$$ where $H$ is exactly the set of those transformations in $\overline{H}$ that preserve $S_1$ and $S_2$, and $\overline{f}H=H\overline{f}$ is exactly the set of those transformations in $\overline{H}$ that exchange $S_1$ and $S_2$.
    \item[(iii)]
    The groups $H$, $\overline{H}$, and $\langle \overline{f} \rangle$ fit into a short exact sequence similar to part \ref{thm:construction_of_action_on_disjoint_union:iii:short_exact_sequence}.
    \item[(iv)]
    The action of $H$ on $S_2$ is also simply transitive, and the action of $\overline{H}$ on the disjoint union $\overline{S}$ is also simply transitive.
    \end{enumerate}
    \item \label{thm:construction_of_action_on_disjoint_union:vi:hbar}
    Suppose again the action of $G$ on $S_1$ is simply transitive. Let $H$ and its action on $S_2$ be as in part \ref{thm:construction_of_action_on_disjoint_union:v:H}. Let $h \in H$ and let $\overline{h}=h \bigsqcup fhf^{-1}$ be the extension of $h$ to $\overline{S}$. In particular, on $S_2$ this is $fhf^{-1}$. Then the function $\overline{f} \circ \overline{h} = \overline{h} \circ \overline{f} $ is $fh$ on $S_1$ and is $hf^{-1}$ on $S_2$.
\end{enumerate}
\end{theorem}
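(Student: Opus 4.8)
The plan is to prove the six parts in order, exploiting throughout the single structural dichotomy that drives everything: the extended group $G$ (acting as $g\bigsqcup g$), and later $H$, \emph{preserve} each summand of $\overline{S}=S_1\bigsqcup S_2$, whereas the meta-rotation $\overline{f}=f\bigsqcup f^{-1}$ \emph{exchanges} them. For part \ref{thm:construction_of_action_on_disjoint_union:i:equivariance} I would check directly that $\overline{f}^2=\mathrm{Id}_{\overline{S}}$ (the two restrictions give $f^{-1}f$ and $ff^{-1}$) and that $\overline{f}\ne\mathrm{Id}$ since it swaps the summands, so $\overline{f}$ is an involution; equivariance $\overline{f}(g\cdot s)=g\cdot\overline{f}(s)$ I would verify on each summand, where on $S_1$ it is the given equivariance of $f$ and on $S_2$ it is the equivariance of $f^{-1}$ (which follows by applying $f$ to both sides). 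For part \ref{thm:construction_of_action_on_disjoint_union:ii:internal_direct_product}, the equivariance of part \ref{thm:construction_of_action_on_disjoint_union:i:equivariance}, rewritten as an identity of permutations, reads $\overline{f}g=g\overline{f}$, giving commutation; trivial intersection $G\cap\langle\overline{f}\rangle=\{\mathrm{Id}\}$ follows because $\overline{f}$ swaps the summands while every element of $G$ preserves them, so $\overline{f}\notin G$; commuting subgroups with trivial intersection whose product is $\overline{G}$ make $\overline{G}$ the internal direct product, and the coset description $\overline{G}=G\cup\overline{f}G$ together with the preserve/swap dichotomy identifies $G$ as the preservers and $\overline{f}G$ as the swappers.

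Parts \ref{thm:construction_of_action_on_disjoint_union:iii:short_exact_sequence} and \ref{thm:construction_of_action_on_disjoint_union:iv:simple_transitivity} are then short. For the short exact sequence I would take the inclusion $G\hookrightarrow\overline{G}$ and the projection $g\overline{f}^{\,i}\mapsto\overline{f}^{\,i}$ onto the second direct factor (well defined by part \ref{thm:construction_of_action_on_disjoint_union:ii:internal_direct_product}), whose kernel is exactly $G$; the first-factor projection $g\overline{f}^{\,i}\mapsto g$ is a homomorphism (again by the direct-product structure) splitting the sequence on the left, whence it is isomorphic to the direct-product sequence. For part \ref{thm:construction_of_action_on_disjoint_union:iv:simple_transitivity}, simple transitivity on $S_2$ transports across the $G$-equivariant bijection $f$ in the usual way; for $\overline{S}$ I would prove transitivity (two points in the same summand are joined by some $g\in G$, two points in different summands by $\overline{f}$ followed by a suitable $g$) and freeness (a fixer must lie in $G$, since any element of the coset $\overline{f}G$ swaps the summands and hence fixes nothing, after which freeness of $G$ on each summand finishes), avoiding any finiteness assumption.

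Part \ref{thm:construction_of_action_on_disjoint_union:v:H} is where the real work lies, and I expect the identification of $\overline{H}$ as the \emph{full} centralizer of $\overline{G}$ to be the main obstacle. I would first invoke the classical duality theory for simply transitive actions to know that $H=C_{\mathrm{Sym}(S_1)}(G)$ acts simply transitively on $S_1$ with $|H|=|G|$ and $G=C_{\mathrm{Sym}(S_1)}(H)$. Transporting the $H$-action to $S_2$ by $h\mapsto fhf^{-1}$ makes $f$ an $H$-equivariant bijection, so parts \ref{thm:construction_of_action_on_disjoint_union:i:equivariance}--\ref{thm:construction_of_action_on_disjoint_union:iv:simple_transitivity} apply verbatim to $(H,f)$, yielding items (i)--(iv). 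The containment $\overline{H}\subseteq C_{\mathrm{Sym}(\overline{S})}(\overline{G})$ I would check on generators: $\overline{f}$ commutes with $G$ by part \ref{thm:construction_of_action_on_disjoint_union:i:equivariance} and with $H$ by item (i); and $\overline{h}$ commutes with $\overline{g}$ on each summand because $g$ acts on $S_2$ as $fgf^{-1}$ (this is exactly the $G$-equivariance of $f$), so on $S_2$ the two maps are $f$-conjugates of the commuting pair $g,h\in\mathrm{Sym}(S_1)$. For the reverse containment I would use that $\overline{G}$ acts simply transitively on $\overline{S}$ (part \ref{thm:construction_of_action_on_disjoint_union:iv:simple_transitivity}), so its centralizer has order exactly $|\overline{G}|$; since $|\overline{H}|=2|H|=2|G|=|\overline{G}|$ and $\overline{H}$ is already contained in that centralizer, the two coincide, giving $\overline{H}=C_{\mathrm{Sym}(\overline{S})}(\overline{G})$, and by the symmetry of the centralizer relation for simply transitive groups, $\overline{G}=C_{\mathrm{Sym}(\overline{S})}(\overline{H})$ as well, so $\overline{G}$ and $\overline{H}$ are Lewin dual.

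Finally, for part \ref{thm:construction_of_action_on_disjoint_union:vi:hbar}, the commutation $\overline{f}\circ\overline{h}=\overline{h}\circ\overline{f}$ is exactly item (i) of part \ref{thm:construction_of_action_on_disjoint_union:v:H} read as an identity of permutations, and the two restriction formulas come from a one-line computation on each summand: on $S_1$, $\overline{h}$ acts as $h$ landing in $S_1$ and then $\overline{f}$ acts as $f$, giving $fh$; on $S_2$, $\overline{h}$ acts as $fhf^{-1}$ landing in $S_2$ and then $\overline{f}$ acts as $f^{-1}$, and the cancellation $f^{-1}f=\mathrm{Id}$ collapses this to $hf^{-1}$. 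The recurring technical care throughout—and the only place where a genuine external input is needed—is the duality bookkeeping in part \ref{thm:construction_of_action_on_disjoint_union:v:H}: correctly recognizing that $G$ acts on $S_2$ as $fGf^{-1}$, and then leaning on the standard fact that the centralizer of a simply transitive permutation group is again simply transitive of equal order and recovers the original group under a second centralizer.
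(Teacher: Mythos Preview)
Your proposal is correct and follows essentially the same route as the paper: the preserve/swap dichotomy drives parts \ref{thm:construction_of_action_on_disjoint_union:i:equivariance}--\ref{thm:construction_of_action_on_disjoint_union:iv:simple_transitivity}, and part \ref{thm:construction_of_action_on_disjoint_union:v:H} is handled by showing $\overline{G}$ and $\overline{H}$ commute and are each simply transitive, then invoking standard duality. Two small remarks. First, your freeness argument in part \ref{thm:construction_of_action_on_disjoint_union:iv:simple_transitivity} (a stabilizer lies in $G$ because anything in $\overline{f}G$ swaps the summands, then use freeness of $G$) is actually cleaner than the paper's, which splits into a finite Orbit--Stabilizer case and a separate infinite case. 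Second, your closing step in part \ref{thm:construction_of_action_on_disjoint_union:v:H}---``its centralizer has order exactly $|\overline{G}|$; since $|\overline{H}|=|\overline{G}|$ and $\overline{H}$ is contained, the two coincide''---is only valid for finite groups; in the infinite case equal cardinality of a subgroup does not force equality. The fix is immediate and is what the paper's cited result (Berry--Fiore, Prop.~3.1) encodes: you already know $\overline{H}\subseteq C_{\mathrm{Sym}(\overline{S})}(\overline{G})$ and that both act simply transitively on $\overline{S}$, so for any $c$ in the centralizer and any basepoint $s_0$ there is $h\in\overline{H}$ with $hs_0=cs_0$, whence $h^{-1}c$ fixes $s_0$ and freeness of the centralizer gives $c=h$.
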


\begin{proof}
\begin{enumerate}
\item\textbf{$\overline{f}$ is $G$-equivariant and has order 2.}
Since the bijection $f$ is $G$-equivariant, its inverse $f^{-1}$ is also $G$-equivariant. The coproduct of $G$-equivariant maps is $G$-equivariant, so $\overline{f}$ is $G$-equivariant. To see the $G$-equivariance of $\overline{f}$ more concretely, we can write this argument with piecewise defined functions.
\begin{align*}
\overline{f}(gs)
& =\begin{cases} f(gs) & \text{if } s \in S_1 \\ f^{-1}(gs) & \text{if } s \in S_2 \end{cases} \\ \\
& =\begin{cases} gf(s) & \text{if } s \in S_1 \\ gf^{-1}(s) & \text{if } s \in S_2 \end{cases} \\ \\
& =g\overline{f}(s).
\end{align*}

Next we see $\overline{f}$ has order 2.
\begin{align*}
\overline{f} \circ \overline{f} &= \left( f^{-1} \bigsqcup f \right) \circ \left( f \bigsqcup f^{-1} \right) \\
&=  (f^{-1} \circ f) \bigsqcup \,(f \circ f^{-1}) \\
&= Id_{S_1} \bigsqcup Id_{S_2} \\
&= Id_{\overline{S}}
\end{align*}

\item\textbf{$\overline{G}$ is an Internal Direct Product.} The groups $G$ and $\langle \overline{f} \rangle$ commute by \ref{thm:construction_of_action_on_disjoint_union:i:equivariance}. Let $\overline{G} = \langle G, \overline{f} \rangle$. Since $G$ and $\langle \overline{f} \rangle$ commute, we have $\overline{G} = G \cdot \langle \overline{f} \rangle$. Since $G$ and $\langle \overline{f} \rangle$ commute, we also have $G$ and $\langle \overline{f} \rangle$ are normal in $\overline{G}$. Lastly, $G \cap \langle \overline{f} \rangle = Id_{\overline{S}}$ because every element of $G$ preserves $S_1$ and $S_2$ while $\overline{f}$ exchanges $S_1$ and $S_2$ and is an involution, so $\overline{G}$ is the internal direct product of the commuting groups $G$ and $\langle \overline{f} \rangle$. Since $\overline{G}$ is an internal direct product of commuting groups, every element of $\overline{G}$ can be written uniquely as $g \overline{f}^i= \overline{f}^i g$ for some $g \in G$ and some $i=0,1$ (recall $\overline{f}$ is an involution), so $$\overline{G}=G \cup \overline{f}G=G \cup G\overline{f}.$$ Since $G$ preserves $S_1$ and $S_2$, and $\overline{f}$ exchanges $S_1$ and $S_2$, we know each element of $\overline{f}G=G\overline{f}$ exchanges $S_1$ and $S_2$.

\item\textbf{Short Exact Sequence.}
The homomorphism $\overline{G} \to \langle \overline{f} \rangle$ given by the formula $g \overline{f}^i \mapsto \overline{f}^i$ is clearly surjective onto $\langle \overline{f} \rangle$, and the kernel is clearly $G$, so we have the short exact sequence. A left inverse homomorphism for the inclusion $G \rightarrow \overline{G}$ is $g\overline{f}^i \mapsto g$, so the short exact sequence is isomorphic to the product short exact sequence, see Theorem~3.2 of the lecture note \cite{conradkeith_splitting}.

The Second Isomorphism Theorem offers another way to see this short exact sequence, though less concretely. The Second Isomorphism Theorem says: if $A$ and $B$ are subgroups of a group, and if $A$ normalizes $B$, then
\begin{equation} \label{equ:second_isomorphism_theorem}
\left(AB\right)/B \cong A/\left( A \cap B \right).
\end{equation}
The isomorphism in equation \eqref{equ:second_isomorphism_theorem} is induced by the surjective homomorphism
\begin{align*}
AB \to A/(A\cap B) \\
ab \mapsto a (A\cap B),
\end{align*}
see Section 3.3 in \cite{DummitFoote} for an explanation of the Second Isomorphism Theorem with this notation and homomorphism. We take $A=\langle \overline{f} \rangle$ and $B=G$ in equation \eqref{equ:second_isomorphism_theorem}, and obtain
\begin{align*}
\left(\langle\overline{f}\rangle G \right)/G & \cong \langle\overline{f}\rangle/\left(\langle\overline{f}\rangle\cap G\right) \\
\overline{G} /G &\cong \langle \overline{f}\rangle / \{ Id_{\overline{S}} \} \\
\overline{G}/G &\cong \langle\overline{f}\rangle.
\end{align*}

\item\textbf{Simple Transitivity of $\overline{G}$.} Suppose $G$ acts simply transitively on $S_1$. We claim the action of $G$ on $S_2$ is also simply transitive. Let $s_2, s_2' \in S_2$. Then there exist $s_1, s_1' \in S_1$ and unique $g \in G$ such that
$$f(s_1) = s_2, \quad \quad f(s_1') = s_2', \quad \quad \text{and} \quad \quad gs_1 = s_1'.$$
An application of $f$ to the last equality yields $gs_2 = s_2'$. If there is a second element of $G$ that moves $s_2$ to $s_2'$, then we can apply $f^{-1}$ to the equation, and use the simple transitivity on $S_1$ to conclude we have the same $g$. Of course we are making use of the $G$-equivariance of $f$ and $f^{-1}$. Thus, the action of $G$ on $S_2$ is also simply transitive.

We next prove transitivity of $\overline{G}$ on $\overline{S}$ by evaluating all group elements on any one $s_1 \in S_1$, using the simple transitivity of $G$ on $S_1$ and $S_2$.
\begin{align*}
\left(G \bigsqcup G \overline{f}\right)s_1 &= Gs_1 \bigsqcup (G\overline{f})s_1 \\
&= Gs_1 \bigsqcup G (\overline{f}s_1) \\
&= S_1 \bigsqcup S_2 \\
&= \overline{S}
\end{align*}
The orbit of any one $s_1 \in S_1$ is all of $\overline{S}$, so the $\overline{G}$-action on $\overline{S}$ is transitive, without any finiteness assumption.

The uniqueness part of simple transitivity on $\overline{S}$ follows quickly from the transitivity in the finite case from multiple applications of the Orbit-Stabilizer Theorem. Suppose in this paragraph that any one of $G$, $S_1$, or $S_2$ is finite. Then so are the other two, because all three have the same cardinality from the simple transitivity of $G$ on $S_1$ and $S_2$. We have $|\overline{G}|=2\cdot|G|$ from part \ref{thm:construction_of_action_on_disjoint_union:ii:internal_direct_product}, which equals $|S_1|+|S_2|=|\overline{S}|$. Plugging into the Orbit-Stabilizer Theorem for any $s \in \overline{S}$, we have
\begin{align*}
|\overline{G}|/|\overline{G}_s| &= |\text{orbit}(s)| \\
|\overline{G}|/|\overline{G}_s| &= |\overline{S}| \\
|\overline{G}|/|\overline{S}| &=  |\overline{G}_s|\\
1 & = |\overline{G}_s|,
\end{align*}
so that $\overline{G}$ acts simply transitively in the finite case.

If $G$, $S_1$, and $S_2$ are not finite, the uniqueness part of simple transitivity on $\overline{S}$ can still be verified. From the expression of $\overline{G}$ in part \ref{thm:construction_of_action_on_disjoint_union:ii:internal_direct_product} as a disjoint union of preserving and exchanging functions on $\overline{S}$, and from the assumed simple transitivity of $G$ on $S_1$, we know for any two $s_1, s_1' \in S_1$, there is a unique element of $\overline{G}$ that carries $s_1$ to $s_1'$, and that element is in $G$. Similarly, if $s_2, s_2' \in S_2$, there is unique element in $\overline{G}$ that carries $s_2$ to $s_2'$, and that element is in $G$.

Suppose $s_1 \in S_1$ and $s_2 \in S_2$, and suppose there are two elements in $\overline{G}$ that carry $s_1$ to $s_2$. From the decomposition in part \ref{thm:construction_of_action_on_disjoint_union:ii:internal_direct_product} these two group elements necessarily have the form $g\overline{f}$ and $g'\overline{f}$. Then
\begin{align*}
g\overline{f}(s_1) = \; &s_2 = g' \overline{f}(s_1) \\
g\overline{f}(s_1) &= g' \overline{f}(s_1) \\
\overline{f}(s_1) &= g^{-1} g' \left( \overline{f}(s_1) \right)
\end{align*}
so $g^{-1}g' = Id_{\overline{S}}$, and $g\overline{f}=g' \overline{f}$. A similar argument shows the uniqueness of a group element in $\overline{G}$ that carries $s_2$ to $s_1$.

\item\textbf{Duality of $\overline{H}$ and $\overline{G}$.}
Suppose $G$ acts simply transitively on $S_1$.
Let $H$ be the centralizer of the simply transitive group $G$ in $\text{\rm Sym}(S_1)$. Then $H$ also acts simply transitively, so $H$ is the dual group of $G$, by Proposition~3.2 of \cite{BerryFiore} (the claim that a centralizer of a simply acting group is its Lewin dual was stated on page 253 of \cite{LewinGMIT} but not proved there).

Let $H$ act on $S_2$ via its embedded copy $fHf^{-1} \leqslant \text{\rm Sym}(S_2)$. Then $f$ is $H$-equivariant because
\begin{equation} \label{equ:f_is_H_equivariant}
\begin{aligned}
   f(hs_1) &= f\left( h(f^{-1}f)s_1 \right) \\
   &= \left( f h f^{-1} \right)f(s_1) \\
   &\overset{\text{def}}{=} hf(s_1). \\
\end{aligned}
\end{equation}
The group $H$ and bijection $f$ now satisfy the hypotheses on $G$ and $f$ in the present theorem, so we can apply parts \ref{thm:construction_of_action_on_disjoint_union:i:equivariance}, \ref{thm:construction_of_action_on_disjoint_union:ii:internal_direct_product}, \ref{thm:construction_of_action_on_disjoint_union:iii:short_exact_sequence}, and \ref{thm:construction_of_action_on_disjoint_union:iv:simple_transitivity} to $H$ and $f$. In particular, the group $\overline{H}:=\langle  H,\overline{f}\rangle$ in $\text{\rm Sym}(\overline{S})$ acts simply transitively on $\overline{S}$.

The groups $\overline{G}$ and $\overline{H}$ commute because
$$(g\overline{f}^i)(h\overline{f}^j) = (h\overline{f}^j)(g\overline{f}^i),$$
as all four of these elements commute and can be rearranged at will. By Proposition~3.1 of \cite{BerryFiore}, we conclude from the commutativity of $\overline{G}$ and $\overline{H}$ that $\overline{H}$ is actually the centralizer of $\overline{G}$, and $\overline{H}$ and $\overline{G}$ are dual groups in $\text{\rm Sym}(\overline{S})$.

\item\textbf{formulas for the Function $\overline{f} \,\overline{h}$.}
Suppose the action of $G$ on $S_1$ is simply transitive. Let $H$ and its action on $S_2$ be as in part \ref{thm:construction_of_action_on_disjoint_union:v:H}.
On $S_1$, the composition $\overline{f} \,\overline{h}$ is clearly $fh$.

Let $s_2 \in S_2$. Then
$$\overline{f} \,\overline{h} (s_2) = f^{-1} \left( fhf^{-1} \right) (s_2) = hf^{-1}(s_2).$$
\end{enumerate}
\end{proof}

We can now apply Theorem~\ref{thm:construction_of_action_on_disjoint_union} to the 24 major and minor triads and the 24 dominant and half-diminished seventh chords to treat the omnibus progression and its transformational network, both pictured in Figure~\ref{fig:omnibus}. We do the algebra in Example~\ref{examp:triadsandsevenths} (building on Section~\ref{sec:notations}), and we do the omnibus progression in Example~\ref{examp:omnibus}.

\begin{example}[Simply Transitive Group Action on Major, Minor, Dominant, and Half-Diminished Chords] \label{examp:triadsandsevenths}
We now use the ingredients from Section~\ref{sec:notations} in Theorem~\ref{thm:construction_of_action_on_disjoint_union}. We take $G$ to be the $TI$-group, we take $S_1$ and $S_2$ to be the following $G$-orbits with 24 elements each,
\begin{align*}
S_1 = \triads & = TI\langle 0,4,7 \rangle \\
S_2 = \dom & = TI\langle 0, 4, 7, 10 \rangle,
\end{align*}
and we take the $G$-equivariant bijection\footnote{In Table~\ref{table:seventhchord_functions} in Section~\ref{sec:bijections_of_triads_and_seventh_chords} we will introduce bijections with other seventh chords as well. The bijection $f$ from equations \eqref{equ:bijection_triads_domhalfdiminished} and \eqref{equ:2y-x} is denoted $f_{Dom^7Tr}$ in Table~\ref{table:seventhchord_functions}. These bijections will be used to build a larger group that acts simply transitively on several types of seventh chords in Section~\ref{sec:construction_for_disjoint_union_multiple}.} to be
\begin{align*}
f \colon \; \triads &\to \dom \\
f\langle w, x, y \rangle &:= \langle w,\; x,\; y,\; 2y-x \rangle.
\end{align*}
from Figure~\ref{fig:I7_commutes_with_f} in the Introduction and equations \eqref{equ:bijection_triads_domhalfdiminished} and \eqref{equ:2y-x}. In Theorem~\ref{thm:construction_of_action_on_disjoint_union}, $H$ is now necessarily the neo-Riemannian $PLR$-group on consonant triads in dualistic root position (this particular instantiation of the $PLR$-group was reviewed in \cite{cransfioresatyendra}, and is briefly recalled in Section~\ref{sec:notations}). The $PLR$-group acts on the dominant and half-diminished seventh chords via conjugation with $f$ from equations \eqref{equ:bijection_triads_domhalfdiminished} and \eqref{equ:2y-x}. With $\overline{f}\overset{\text{def}}{=}f \sqcup f^{-1}$,
Theorem~\ref{thm:construction_of_action_on_disjoint_union} now guarantees that the groups
$$\overline{G}=\langle TI\text{-group},\; \overline{f} \rangle \quad \quad \quad \text{and} \quad \quad \quad \overline{H}=\langle PLR\text{-group},\; \overline{f} \rangle$$
act simply transitively on
$$\overline{S} = \triads \bigsqcup \dom$$
and are dual to one another in $\text{Sym}(\overline{S})$, and moreover $\overline{f}$ commutes with all elements of both groups.

The transformation $\overline{f} \circ \overline{L} = \overline{L} \circ \overline{f}$ will be essential for our transformational re-interpretation of the omnibus progression. Recall $\overline{L}$ from Notation~\ref{not:bar_notation}.
Part \ref{thm:construction_of_action_on_disjoint_union:vi:hbar} of Theorem~\ref{thm:construction_of_action_on_disjoint_union} allows us to rewrite $\overline{f} \circ \overline{L} = \overline{L} \circ \overline{f}$ as $f L$ on consonant triads and as $Lf^{-1}$ on dominant and half-diminished seventh chords. In Figure~\ref{fig:fbar_Lbar_squares} we compute the transformation $\overline{f} \circ \overline{L} = \overline{L} \circ \overline{f}$ on both $a$ and $A$ using both composites, and see they are the same as $fL$ on $a$ and $A$ respectively (the upper right paths in each diagram).

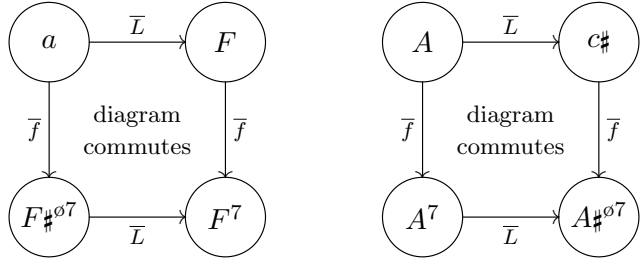
\begin{figure}[H]
    $$\entrymodifiers={=<2.5pc>[o][F-]}
\xymatrix@C=3pc@R=3pc{
a \ar[r]^{\overline{L}} \ar[d]_{\overline{f}} \ar@{}[dr]|{\txt{\footnotesize diagram \\ \footnotesize commutes}} & F \ar[d]^{\overline{f}} \\
F\sh\textsuperscript{\o{}7} \ar[r]_{\overline{L}} & F^7
}
\quad \quad \quad \quad
\entrymodifiers={=<2.5pc>[o][F-]}
\xymatrix@C=3pc@R=3pc{
A \ar[r]^{\overline{L}} \ar[d]_{\overline{f}} \ar@{}[dr]|{\txt{\footnotesize diagram \\ \footnotesize commutes}} & c\sh \ar[d]^{\overline{f}} \\
A^7 \ar[r]_{\overline{L}} & A \sh \textsuperscript{\o{}7}
}$$
\caption{We compute the transformation $\overline{f} \circ \overline{L} = \overline{L} \circ \overline{f}$ on both $a$ and $A$ using both composites.  Here
$\overline{f} \overset{\text{def}}{=} f \sqcup f^{-1}$ where $f$ is as in equations \eqref{equ:bijection_triads_domhalfdiminished} and \eqref{equ:2y-x}, and $\overline{L}$ means $L$ on consonant triads and $\overline{L}$ means $fLf^{-1}$ on dominant seventh and half-diminished seventh chords as in Notation~\ref{not:bar_notation}. These two composites $\overline{f} \circ \overline{L} = \overline{L} \circ \overline{f}$ on $a$ and $A$ are the first step in the omnibus progression of Figure~\ref{fig:omnibus_network} and the inverted omnibus progression of Figure~\ref{fig:omnibus_network_inverted}. The intermediate chords of these two squares are {\it not} in the progression; only $a$ and $F^7$ are in the omnibus progression of Figure~\ref{fig:omnibus_network}, and $A$ and $A\sh\textsuperscript{\o{}7}$ are in the inverted omnibus progression of Figure~\ref{fig:omnibus_network_inverted}.} \label{fig:fbar_Lbar_squares}
\end{figure}
\end{example}

We can proceed to discuss the omnibus progression from Figure~\ref{fig:omnibus} using our construction from Theorem~\ref{thm:construction_of_action_on_disjoint_union} in the special case of consonant triads combined with dominant seventh chords and half-diminished seventh chords. The omnibus progression moves between minor triads and dominant seventh chords, while the inverted omnibus progression moves between major triads and half-diminished seventh chords. Both omnibus progressions incorporate parsimonious voice leading.

\begin{example}[Omnibus Network] \label{examp:omnibus}
We continue the notations $\overline{f}$, $\overline{G}$, $\overline{H}$, and $\overline{S}$ from the preceding Example~\ref{examp:triadsandsevenths}.
The length 7 omnibus progression in Figure~\ref{fig:omnibus} of the Introduction consists of two transformationally palindromic  progressions of length 4 that are structurally similar, overlap in one chord, and have another chord in common. The pattern can be repeated to make a length 12 sequence consisting of 4 overlapping length 4 sequences, ultimately ending with the same initial chord $a$. This is done in Figure~\ref{fig:omnibus_network}, where the first two rows make up the length 7 progression in Figure~\ref{fig:omnibus}. The first two rows overlap in $f\sh$ and both have $D^7$. The structural similarity of all the rows in Figure~\ref{fig:omnibus_network} is visible in the same  horizontal $\overline{H}$ transformations of each row. The duality of Theorem~\ref{thm:construction_of_action_on_disjoint_union}~\ref{thm:construction_of_action_on_disjoint_union:v:H} guarantees that the element $T_{-3}$ of $\overline{G}$ on the vertical arrows commutes with all the horizontal $\overline{H}$ arrows. We can also start the entire network on $A$ instead of $a$ to obtain a kind of inverted omnibus in Figure~\ref{fig:omnibus_network_inverted}. The inverted omnibus appeared on page 114 of \cite{ziehn} in the section called ``Canons in the small Third and large Sixth.''

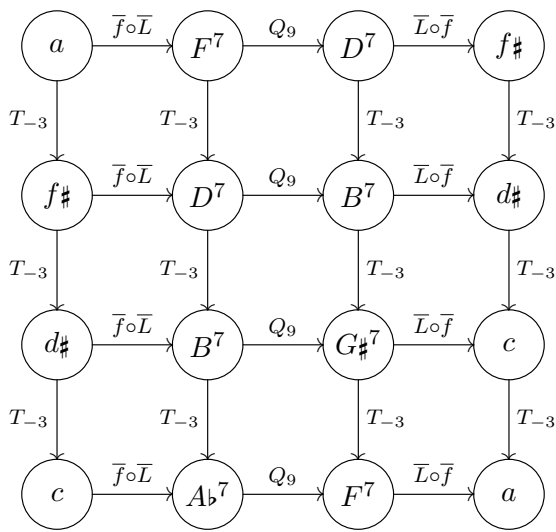
\begin{figure}[H]
$$\entrymodifiers={=<2.2pc>[o][F-]}
\xymatrix@C=2.5pc@R=2.5pc
{a   \ar[r]^{\overline{f}\circ {\overline{L}}} \ar[d]_{T_{-3}} & F^7 \ar[r]^{Q_9} \ar[d]_{T_{-3}} &  D^7 \ar[r]^{ {\overline{L}}\circ \overline{f}} \ar[d]^{T_{-3}} &  f\sh \ar[d]^{T_{-3}}  \\
 f\sh \ar[r]^{\overline{f}\circ  {\overline{L}}} \ar[d]_{T_{-3}} & D^7 \ar[r]^{Q_9} \ar[d]_{T_{-3}}  & B^7 \ar[r]^{ {\overline{L}}\circ \overline{f}} \ar[d]^{T_{-3}} &  d\sh \ar[d]^{T_{-3}} \\
 d\sh \ar[r]^{\overline{f}\circ {\overline{L}}} \ar[d]_{T_{-3}} & B^7 \ar[r]^{Q_9} \ar[d]_{T_{-3}}  & G \sh ^7 \ar[r]^{ {\overline{L}}\circ \overline{f}} \ar[d]^{T_{-3}} & c\ar[d]^{T_{-3}} \\
 c    \ar[r]^{\overline{f}\circ  {\overline{L}}}                 & A \fl^7 \ar[r]^{Q_9}             &  F^7 \ar[r]^{ {\overline{L}}\circ \overline{f}}  & a
}$$
\caption{The first two rows of this network are the two short omnibus progression excerpts in Figure~\ref{fig:omnibus}, ignoring voicings. Notice the last chord $f \sh$ of the first line is the first chord of the second line. This network continues that enchained pattern, and shows successive short omnibus progression excerpts as rows. Each row is a minor third below its preceding row; the duality in Theorem~\ref{thm:construction_of_action_on_disjoint_union}~\ref{thm:construction_of_action_on_disjoint_union:v:H} applied in Example~\ref{examp:triadsandsevenths} guarantees the commutativity of the vertical transformations with the horizontal transformations. Overall, this long omnibus has 12 chords, and begins and ends on $a$. The short omnibus progression actually has 5 chords. The enchaining of two successive 5-chord short omnibus progressions overlaps in two chords. Each row here only shows four of the five chords in order to be consistent with Figure~\ref{fig:omnibus}; the enchaining of two four-chord sequences overlaps in just one chord. This is why we speak of short omnibus progression {\it excerpts} (only 4 of the 5 chords).} \label{fig:omnibus_network}
\end{figure}
\begin{figure}
$$\entrymodifiers={=<2pc>[o][F-]}
\xymatrix@C=2.5pc@R=2.5pc
{A   \ar[r]^{\overline{f}\circ  {\overline{L}}} \ar[d]_{T_{-3}} & A\sh\textsuperscript{\o{}7} \ar[r]^{Q_9} \ar[d]_{T_{-3}} &  G\textsuperscript{\o{}7} \ar[r]^{ {\overline{L}}\circ \overline{f}} \ar[d]^{T_{-3}} &  F\sh \ar[d]^{T_{-3}}  \\
 F\sh \ar[r]^{\overline{f}\circ  {\overline{L}}} \ar[d]_{T_{-3}} & G\textsuperscript{\o{}7} \ar[r]^{Q_9} \ar[d]_{T_{-3}}  & E\textsuperscript{\o{}7} \ar[r]^{ {\overline{L}}\circ \overline{f}} \ar[d]^{T_{-3}} &  D\sh \ar[d]^{T_{-3}} \\
 D\sh \ar[r]^{\overline{f}\circ  {\overline{L}}} \ar[d]_{T_{-3}} & E\textsuperscript{\o{}7} \ar[r]^{Q_9} \ar[d]_{T_{-3}}  & C\sh\textsuperscript{\o{}7} \ar[r]^{ {\overline{L}}\circ \overline{f}} \ar[d]^{T_{-3}} & C\ar[d]^{T_{-3}} \\
 C    \ar[r]^{\overline{f}\circ  {\overline{L}}}                 & C\sh\textsuperscript{\o{}7} \ar[r]^{Q_9}             &  A\sh\textsuperscript{\o{}7} \ar[r]^{ {\overline{L}}\circ \overline{f}}  & A
}$$
\caption{If we begin the omnibus network in Figure~\ref{fig:omnibus_network} with $A$ instead of $a$, then we obtain an inverted omnibus progression involving major triads and half-diminished seventh chords instead of minor triads and dominant seventh chords (the {\it Schritt} $Q_9$ is important for the inverted omnibus to work). The inverted omnibus appeared on page 114 of \cite{ziehn}. } \label{fig:omnibus_network_inverted}
\end{figure}
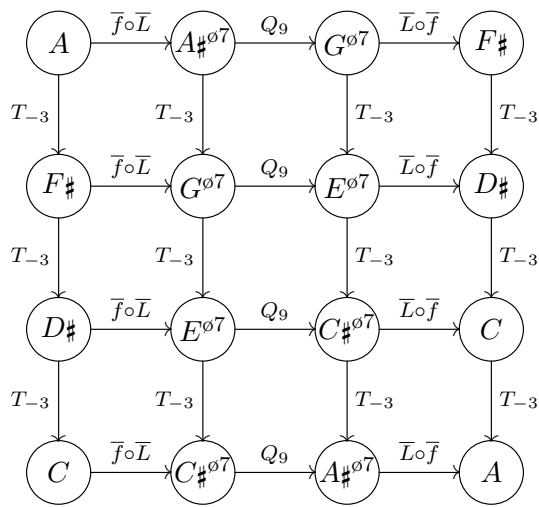
\end{example}

\begin{example}[Omnibus with Voicing Transformation Matrices]
The voice leading in Figure~\ref{fig:omnibus} is exactly traced via the following voicing transformation matrices:
$$U_{sopr} = \left(
\begin{array}{cccc}
8 & 7 & 3 & 7\\
0 & 1 & 0 & 0\\
0 & 0 & 1 & 0\\
5 & 5 & 9 & 6
\end{array}
\right), \,
U_{alto} = \left(
\begin{array}{cccc}
{11} & 1 & {10} & 3\\
0 & 1 & 0 & 0\\
2 & 7 & 3 & 1\\
0 & 0 & 0 & 1
\end{array}
\right), \,
U_{tenor} = \left(
\begin{array}{cccc}
2 & 1 & 7 & 3\\
{11} & 0 & 5 & 9\\
0 & 0 & 1 & 0\\
0 & 0 & 0 & 1
\end{array}
\right) \in SL(4, \mathbb{Z}_{12})
$$
In order to realize the voice leading in Figure~\ref{fig:omnibus}, one starts with the voicing $(4, 9, 0, 4) \in \mathbb{Z}_{12}^{\times 4}$ and applies (here reading from left to right) one after the other the transformations
$$U_{sopr},\, U_{sopr},\, U_{alto},\, U_{alto},\, U_{alto},\, U_{tenor},\, U_{tenor},\, U_{tenor},\, U_{sopr},\, U_{sopr},\, U_{sopr},\, U_{alto}, \dots $$
All three matrices $U_{sopr}$, $U_{alto}$, and $U_{tenor}$ are of order $12$ in $SL(4,\mathbb{Z}_{12})$.
There are precisely 36 elements in $SL(4,\mathbb{Z}_{12})$ of the form $$\left(
\begin{array}{cccc}
{a_1} & {a_2} & {a_3} & {a_4}\\
0 & 1 & 0 & 0\\
0 & 0 & 1 & 0\\
{d_1} & {d_2} & {d_3} & {d_4}\\
\end{array}
\right),$$ which realize the voice-leading with moving bass and soprano.
\end{example}

\begin{example}[``I'm Coming Virginia'' and ``Stella by Starlight'']
We have two more examples of Theorem~\ref{thm:construction_of_action_on_disjoint_union} and the group in Example~\ref{examp:triadsandsevenths} with ``I'm Coming Virginia'' by Donald Heywood and ``Stella by Starlight'' by Victor Young. They are both flip-flop cycles of $K_{3,4}$ and $Q_7K_{1,4}$ with a consonant triad connected at the end via $Q_5\overline{f}$, see Figure~\ref{fig:Virginia_and_Stella}. The $T_{-7}$ transposition between the two examples should not be overemphasized, as keys are commonly changed in Jazz and we could simply use a different transposition for different keys. The two passages here are in their ``original'' keys - as in the {\it Real Book} or the {\it Anthologie les Grilles du Jazz}. The big band arrangement of ``I’m coming Virginia'' by Artie Shaw starts in the ``historic'' key of $F$, but then it modulates to $A\fl$. The original film score of ``Stella by Starlight'' is in $E\fl$, but jazz musicians now prefer $B\fl$.
\begin{figure}[H]
\begin{center}
\includegraphics[scale = 0.375]{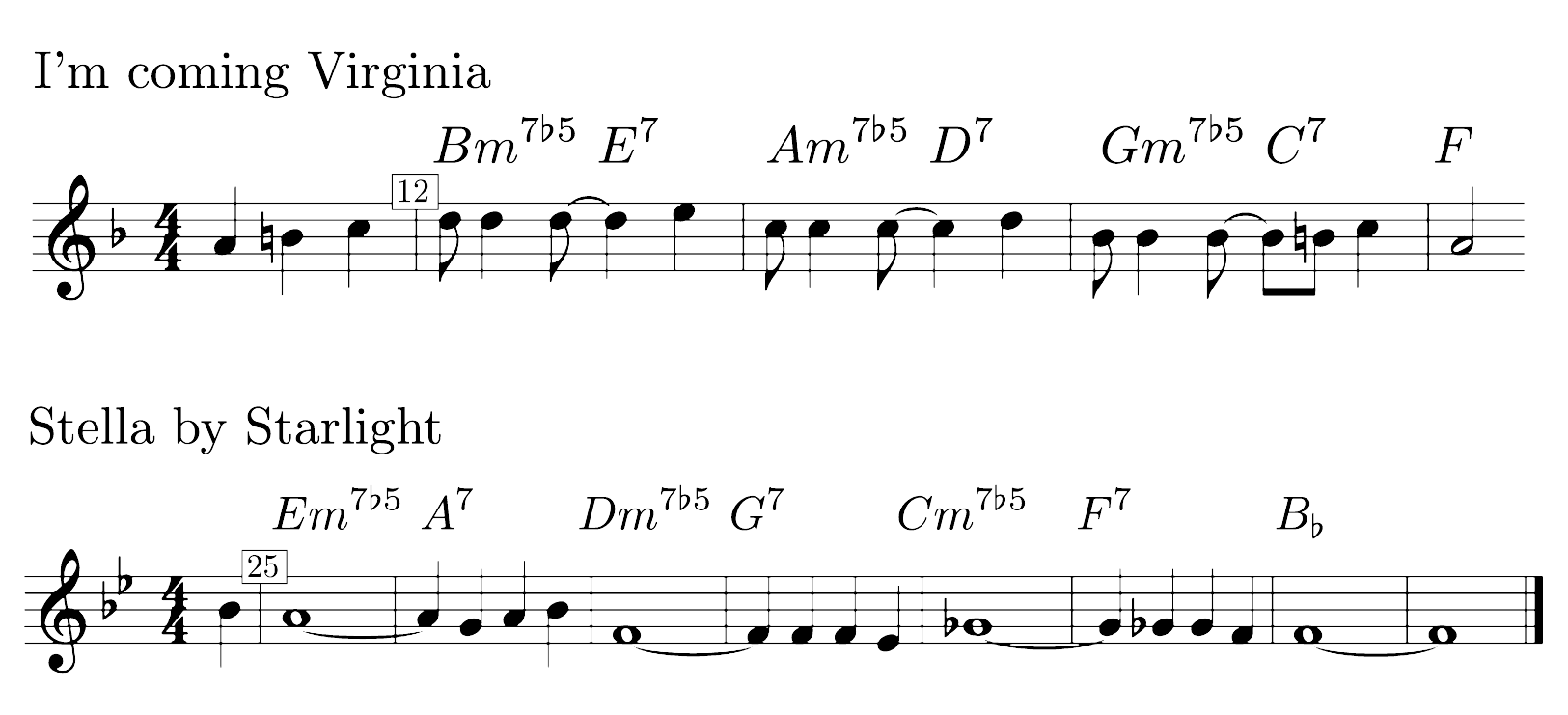}
$$\entrymodifiers={=<2.2pc>[o][F-]}
\xymatrix@C=2.5pc@R=2.5pc{B^{\o{}7}   \ar[r]^{K_{3,4}} \ar[d]_{T_{-7}} & E^{7} \ar[r]^{Q_7K_{1,4}} \ar[d]_{T_{-7}} &  A^{\o{} 7}
\ar[r]^{K_{3,4}} \ar[d]^{T_{-7}} &  D^7 \ar[d]^{T_{-7}} \ar[r]^{Q_7K_{1,4}} & G^{\o{}7} \ar[r]^{K_{3,4}} \ar[d]^{T_{-7}} & C^7 \ar[d]^{T_{-7}} \ar[r]^{Q_5\overline{f}} & F \ar[d]^{T_{-7}} \\
E^{\o{}7} \ar[r]^{K_{3,4}} & A^{7} \ar[r]^{Q_7K_{1,4}} & D^{\o{} 7} \ar[r]^{K_{3,4}} &  G^7 \ar[r]^{Q_7K_{1,4}} & C^{\o{}7} \ar[r]^{K_{3,4}} & F^7 \ar[r]^{Q_5\overline{f}} &B\fl} $$
$$\entrymodifiers={+<2mm>[F-]}
\xymatrix@C=2.25pc@R=2.5pc{\langle 9,5,2,11 \rangle   \ar[r]^{K_{3,4}} \ar[d]_{T_{-7}} & \langle 4,8,11,2 \rangle \ar[r]^{\;Q_7K_{1,4}} \ar[d]_{T_{-7}} &  \langle 7,3,0,9 \rangle
\ar[r]^{K_{3,4}} \ar[d]^{T_{-7}} &  \langle 2,6,9,0 \rangle\ar[d]^{T_{-7}}  \ar[r]^{Q_7K_{1,4}} &
\langle 5,1,10,7 \rangle \ar[r]^{K_{3,4}} \ar[d]^{T_{-7}} & \langle 0,4,7,10 \rangle \ar[d]^{T_{-7}} \\
\langle 2,10,7,4 \rangle \ar[r]^{K_{3,4}} & \langle 9,1,4,7 \rangle \ar[r]^{Q_7K_{1,4}} & \langle 0,8,5,2 \rangle \ar[r]^{K_{3,4}} &  \langle 7,11,2,5 \rangle \ar[r]^{Q_7K_{1,4}} & \langle 10,6,3,0 \rangle \ar[r]^{K_{3,4}} & \langle 5,9,0,3 \rangle } $$
\end{center}
\caption{``I'm Coming Virginia'' by Donald Heywood with lyrics by Will Marion Cook, and ``Stella by Starlight'' by Victor Young. Chords denoted with minor 7 flat 5 are the same as half-diminished seventh chords, so $Bm^{7\fl 5}$ is the same as $B^{\o{} 7}$, etc. Both songs are flip-flop cycles of $K_{3,4}$ and $Q_7K_{1,4}$ with a consonant triad connected at the end via $Q_5\overline{f}$. That last triad is left off of the bottom pc-seg network due to space limitations.} \label{fig:Virginia_and_Stella}
\end{figure}
\end{example}

\begin{example}[`` `Round Midnight '' by Thelonious Monk and Cootie Williams]
Another example of Theorem~\ref{thm:construction_of_action_on_disjoint_union} and the group in Example~\ref{examp:triadsandsevenths} is the flip-flop cycle of $K_{3,4}$ and $Q_7K_{1,4}$ in the introduction to `` `Round Midnight '' by Thelonious Monk and Cootie Williams, although the dominant seventh chords here are not literally dominant seventh chords because the fifth of each dominant seventh chord is flattened. See Figure~\ref{fig:RoundMidnight}.
\begin{figure}
\begin{center}
\includegraphics[scale = 0.6]{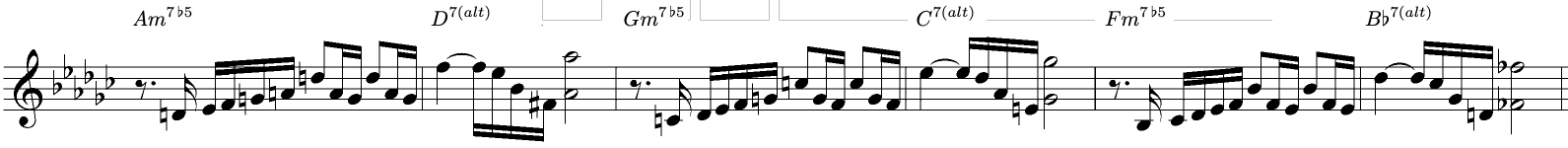}
$$\entrymodifiers={=<2.2pc>[o][F-]}
\xymatrix@C=5pc@R=2.5pc{A^{\o{} 7} \ar[r]^{K_{3,4}} & D^7 \ar[r]^{Q_7K_{1,4}} & G^{\o{}7} \ar[r]^{K_{3,4}} & C^7 \ar[r]^{Q_7K_{1,4}} & F^{\o{}7} \ar[r] & B\flat ^7 }$$
$$\entrymodifiers={+<2mm>[F-]}
\xymatrix@C=2.5pc@R=2.5pc{\langle 7,3,0,9 \rangle \ar[r]^{K_{3,4}} & \langle 2,6,9,0 \rangle \ar[r]^{Q_7K_{1,4}\;} &
\langle 5,1,10,7 \rangle \ar[r]^{K_{3,4}} & \langle 0,4,7,10 \rangle \ar[r]^{Q_7K_{1,4}} & \langle 3,11,8,5 \rangle \ar[r]^{K_{3,4}} & \langle 10,2,5,8 \rangle }$$
\end{center}
\caption{Introduction to `` `Round Midnight '' by Thelonious Monk and Cootie Williams with lyrics by Bernie Hanighen. This is a flip-flop cycle of $K_{3,4}$ and $Q_7K_{1,4}$, though the dominant seventh chords are not literally dominant 7th chords because they have a flattened fifth $\flat \hat{5}$. The networks drawn show the {\it unaltered} dominant 7th chords.
On the downbeats with the dominant chords, the melody has a $\sharp \hat{9}$ in tension with the simultaneous $\hat{3}$ in the harmony, and only later in the measure does the $\hat{3}$ appear in the melody. For instance in the second measure with $D^{7(alt)}$, the melody starts with $E\sh =F$, and later in the measure we see the $F\sh$ in the melody. The $A\fl$ on beat 3 in that measure is a $\flat \hat{5}$ (the altered chord tone), the fourth and sixth measures are analogous. The melody in the second measure is based on the \emph{altered scale} $D$ - $E\fl$ - $E\sh$ - $F\sh$ - $G\sh \slash A\fl$ - $B\fl$ - $C$ - $D$, which is a split-and-fuse-alteration of the $D$-mixolydian $D$ - $E$ - $F\sh$ - $G$ - $A$ - $B$ - $C$ - $D$ in which $E$ splits into $E\flat$ and $E\sh$ and $G$ and $A$ merge to $G\sh \slash A\fl$. Here $E\fl$ and $E\sh$ are considered as representatives of $\hat{2}$ so that $F\sh$ is still $\hat{3}$. The $G\sh \slash A\fl$ can be interpreted as $\hat{4}$ or $\hat{5}$, we consider it as $\hat{5}$ in order to have the flattened fifth $A\fl$ of our seventh chord $D^7$. The $D^7$ is the $V$ chord of $G$-major, so it is natural to consider it inside the fifth mode of $G$-major, which is $D$-mixolydian.}
\label{fig:RoundMidnight}
\end{figure}
\end{example}

Next we develop several other options for $f$ in Theorem~\ref{thm:construction_of_action_on_disjoint_union} to greatly broaden its applicability. These other options for $f$ arise from pitch-class segment extensions, pitch-class segment truncations, and pitch-class segment modifications.

\begin{example}[Pitch-Class Segment Extensions and Dualities between $TI$-Groups and Generalized Contextual Groups] \label{examp:generalizedcontextualgroup}
In Example~\ref{examp:triadsandsevenths} we saw how the extension of the pitch-class segment $\langle 0,4,7 \rangle$ to the pitch-class segment  $\langle 0, 4, 7, 10 \rangle$ together with the known $TI$-$PLR$ duality generated an entire example of Theorem~\ref{thm:construction_of_action_on_disjoint_union}. We can in fact do this for the extension of any pitch-class segment $X_1$  to a longer pitch-class segment $X_2$ using \cite{fioresatyendra2005} (the initial pitch-class segment $X_1$ must contain an interval besides a unison and a tritone). We will prove in Theorem~\ref{thm:conjugation_with_extension} that conjugation with pc-segment extension commutes with contextual inversion and $Q_i$, so the generalized contextual group of the longer pitch-class segment $X_2$ is equal to the conjugation of the generalized contextual group of the shorter pitch-class segment $X_1$, where we conjugate by the map of orbits $$TIX_1 \to TIX_2$$
in part \ref{thm:construction_of_action_on_disjoint_union:v:H} of Theorem~\ref{thm:construction_of_action_on_disjoint_union}.

Let $X_1=\langle x_1, \dots, x_{n_1}\rangle$ be a selected, fixed pitch-class segment that contains two distinct pitch-classes $x_q$ and $x_r$ which span an interval other than a tritone. Then the $TI$-group acts simply transitively on the orbit
$$S_1 := TI\langle x_1, \dots, x_{n_1}\rangle.$$
By Sections~3 and 4 of \cite{fioresatyendra2005}, its dual group in $\text{Sym}(S_1)$ is the {\it generalized contextual group}
\begin{equation} \label{equ:qk}
\langle Q_1,\; K_{1,2} \rangle.
\end{equation}
The generators are defined on a pitch-class segment $Y \in S_1$ as
\begin{equation} \label{equ:Qi}
Q_1(Y) :=
\begin{cases}
T_1(Y) & \text{if $Y$ is a $T$-form of $X_1$} \\
T_{-1}(Y) & \text{if $Y$ is an $I$-form of $X_1$}
\end{cases}
\end{equation}
and
\begin{equation} \label{equ:K}
K_{1,2}(Y) := I_{y_1+y_2}(Y).
\end{equation}
By Corollary~4.4 of \cite{fioresatyendra2005}, $K_{1,2}$ can be replaced by any $K_{i,j}$ and the resulting group $\langle Q_1, K_{i,j} \rangle$ is equal to the generalized contextual group in \eqref{equ:qk}, even though a generator is different.

Now we can consider a pitch-class segment $X_2$ of length $n_2$ which extends $X_1$:
$$X_2 = \langle x_1, \dots, x_{n_1}, x_{n_1+1}, \dots, x_{n_2}\rangle.$$
We repeat the aforementioned process for $X_2$ and we obtain
$$S_2 := TI\langle x_1, \dots, x_{n_1}, x_{n_1+1}, \dots, x_{n_2}\rangle$$
with simple transitive $TI$-group action and simply transitive contextual group action (generators have same formulas, but act on the extended pitch-class segments). The one-element assignment $X_1 \mapsto X_2$ then expands to a $TI$-equivariant bijection $f_\text{ext}$ via the $TI$-action. Read the statement of Theorem~\ref{thm:conjugation_with_extension}.

Theorem~\ref{thm:construction_of_action_on_disjoint_union} now applies and obtain dual groups on $\overline{S}=S_1 \bigsqcup S_2$. This encompasses Example~\ref{examp:triadsandsevenths} via the extension $\langle 0,4,7 \rangle \mapsto \langle 0,4,7,10 \rangle $ and any other extension of a pitch-class segment that initially contains two distinct pitch classes that span an interval other than a tritone. For instance, Theorem~\ref{thm:construction_of_action_on_disjoint_union} now applies to the other two extensions of $\langle 0,4,7 \rangle$ in Table~\ref{table:seventhchord_functions}: the extension to a major seventh chord with same root arising from the single-element assignment
$$\langle 0,4,7 \rangle \mapsto \langle 0,4,7,11 \rangle,$$
 and the extension to a minor seventh with root a minor third lower arising from the single-element assignment
$$\langle 0,4,7 \rangle \mapsto \langle 0,4,7,9 \rangle.$$
The last function in Table~\ref{table:seventhchord_functions}, from consonant triads to diminished sevenths, is {\it not} an example of Theorem~\ref{thm:conjugation_with_extension} because it is an alteration combined with an extension, namely $\langle 0,4,7 \rangle \mapsto \langle 1, 4, 7, 10\rangle$ as discussed in Example~\ref{examp:alteration_diminished_sevenths}.

Another interesting example for Theorem~\ref{thm:conjugation_with_extension} in combination with Theorem~\ref{thm:construction_of_action_on_disjoint_union} is the tetractys including into the pentatonic as generated scales, or in other words extending the tetractys to the pentatonic:
$$\langle F,C,G \rangle \mapsto \langle F,C,G,D,A \rangle.$$
We obtain a 48-element group acting on the disjoint union of tetractys pitch-class segments and pentatonic pitch-class segments. The pentatonic further includes into the diatonic, which further includes into the chromatic, so we can further extend the group to have a meta-rotation example of Theorem~\ref{thm:generalized} in Example~\ref{examp:generated_scales}.
\end{example}

\begin{theorem}[Extension and Truncation of a Pitch-Class Segment and Conjugation of Contextual Groups] \label{thm:conjugation_with_extension}
Let $X_1=\langle x_1, \dots, x_{n_1} \rangle$ be a selected, fixed pitch-class segment that contains two distinct pitch-classes $x_q$ and $x_r$ which span an interval other than a tritone. Let $X_2$ be a selected, fixed pitch-class segment of length $n_2$ which extends $X_1$:
$$X_2 = \langle x_1, \dots, x_{n_1}, x_{n_1+1}, \dots, x_{n_2}\rangle.$$
Consider the $TI$-orbits $S_1 := TIX_1$ and $S_2 := TIX_2$, and let $f_\text{\rm ext}$ be the unique $TI$-equivariant map
$$f_\text{\rm ext}\colon \; TIX_1 \to TIX_2$$
that extends the extension $X_1 \mapsto X_2$. Then for $1 \leq i,j \leq n_1$, the $f_\text{\rm ext}$-conjugation of $K_{i,j}$ on $TIX_1$ is $K_{i,j}$ on $TIX_2$. This means for $Y \in TIX_2$,
\begin{equation} \label{equ:contextual_conjugation}
\begin{aligned}
f_\text{\rm ext}K_{i,j}f^{-1}_\text{\rm ext} (Y) &= I_{y_i+y_j}(Y) \\
& \overset{\text{\rm def}}{=} K_{i,j}(Y).
\end{aligned}
\end{equation}
Moreover, the $f_{\text{\rm ext}}$-conjugation of $Q_i$ on $TIX_1$ is $Q_i$ on $TIX_2$:
\begin{equation} \label{equ:Qi_conjugation}
f_\text{\rm ext}Q_if^{-1}_\text{\rm ext}=Q_i,
\end{equation}
and the $f_\text{\rm ext}$-conjugation of the generalized contextual group associated to $X_1$ is the generalized contextual group associated $X_2$. Consequently, either one of these generalized contextual groups is generated by contextual inversions if and only if the other is.

Let $f_\text{\rm trunc}$ be the unique $TI$-equivariant map
$$f_\text{\rm trunc }\colon \; TIX_2 \to TIX_1$$
that extends the truncation $X_2 \mapsto X_1$. Then for $1 \leq i,j \leq n_1$, the $f_\text{\rm trunc}$-conjugation of $K_{i,j}$ on $TIX_2$ is $K_{i,j}$ on $TIX_1$. This means for $Y \in TIX_1$,
\begin{equation} \label{equ:contextual_conjugation_truncation}
\begin{aligned}
f_\text{\rm trunc}K_{i,j}f^{-1}_\text{\rm trunc} (Y) &= I_{y_i+y_j}(Y) \\
& \overset{\text{\rm def}}{=} K_{i,j}(Y).
\end{aligned}
\end{equation}
Moreover, the $f_{\text{\rm trunc}}$-conjugation of $Q_i$ on $TIX_2$ is $Q_i$ on $TIX_1$:
\begin{equation} \label{equ:Qi_conjugation_truncation}
f_\text{\rm trunc}Q_if^{-1}_\text{\rm trunc}=Q_i,
\end{equation}
and the $f_\text{\rm trunc}$-conjugation of the generalized contextual group associated to $X_2$ is the generalized contextual group associated $X_1$. Consequently, either one of these generalized contextual groups is generated by contextual inversions if and only if the other is.
\end{theorem}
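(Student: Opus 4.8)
The plan is to prove the four conjugation identities \eqref{equ:contextual_conjugation}, \eqref{equ:Qi_conjugation}, \eqref{equ:contextual_conjugation_truncation}, \eqref{equ:Qi_conjugation_truncation} by the same ``unwind the definition on a generic group translate'' calculation already carried out for Proposition~\ref{prop:plr_conjugated_action}, and then to read off the statements about generalized contextual groups and about contextual inversions as purely formal consequences of the fact that conjugation by a bijection is a group isomorphism. First I would record the standing facts that make $f_{\text{ext}}$ legitimate: since $X_1$ contains two distinct pitch classes spanning a non-tritone interval, its $TI$-stabilizer is trivial (an inversion fixing $X_1$ entrywise would force two of its entries to span a unison or a tritone), so $\lvert TIX_1\rvert = 24$; the same pair of pitch classes lies in $X_2$, so $\lvert TIX_2 \rvert = 24$ as well. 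Hence the $TI$-actions on $S_1 = TIX_1$ and $S_2 = TIX_2$ are simply transitive and $f_{\text{ext}}\colon gX_1 \mapsto gX_2$ (for $g$ in the $TI$-group) is a well-defined $TI$-equivariant bijection, as in the discussion of Example~\ref{examp:generalizedcontextualgroup}. Two further free observations: (i) for $Y = gX_2 \in S_2$ one has $f_{\text{ext}}^{-1}(Y) = gX_1$, and since $X_2$ extends $X_1$ the first $n_1$ entries of $gX_2$ equal the entries of $gX_1$, so for $1 \le i \le n_1$ the value $y_i$ is the same whether read off $Y\in S_2$ or off $f_{\text{ext}}^{-1}(Y)\in S_1$; (ii) $f_{\text{trunc}}$ is singled out by the same uniqueness property as $f_{\text{ext}}^{-1}$ (it is $TI$-equivariant and sends $X_2\mapsto X_1$), so $f_{\text{trunc}} = f_{\text{ext}}^{-1}$, which means the entire ``truncation half'' of the theorem will follow by inverting the ``extension half.''

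For the main computation, fix $1 \le i,j \le n_1$ and $Y = gX_2 \in S_2$ with $g$ in the $TI$-group. Then $f_{\text{ext}} K_{i,j} f_{\text{ext}}^{-1}(Y) = f_{\text{ext}}\bigl( I_{y_i + y_j}(gX_1) \bigr)$; since $I_{y_i+y_j}$ and $g$ both lie in the $TI$-group, $I_{y_i+y_j}(gX_1) = (I_{y_i+y_j}g)X_1$ is again a translate of $X_1$, so the defining rule $f_{\text{ext}}(hX_1) = hX_2$ gives $(I_{y_i+y_j}g)X_2 = I_{y_i+y_j}(gX_2) = I_{y_i+y_j}(Y)$, which by observation (i) equals $I_{Y_i + Y_j}(Y) = K_{i,j}(Y)$; this is \eqref{equ:contextual_conjugation}. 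For $Q_i$ the only extra input is that $gX_2$ is a $T$-form (respectively an $I$-form) of $X_2$ exactly when $gX_1$ is a $T$-form (respectively an $I$-form) of $X_1$, namely exactly when $g$ is a transposition (respectively an inversion); then $f_{\text{ext}} Q_i f_{\text{ext}}^{-1}(gX_2) = f_{\text{ext}}\bigl(T_{\pm i}(gX_1)\bigr) = T_{\pm i}(gX_2) = Q_i(gX_2)$ with the same sign on both ends, which is \eqref{equ:Qi_conjugation}. (Alternatively one can argue abstractly that $f_{\text{ext}}$-conjugation carries the centralizer of the $TI$-group in $\mathrm{Sym}(S_1)$ into that in $\mathrm{Sym}(S_2)$, which acts simply transitively, so it would suffice to evaluate both sides at the single point $X_2$; but the direct unwinding above is shorter and matches the style of Propositions~\ref{prop:TI-equivariance_contextual_extension} and \ref{prop:plr_conjugated_action}.)

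Finally, conjugation by $f_{\text{ext}}$ is a group isomorphism $\mathrm{Sym}(S_1) \to \mathrm{Sym}(S_2)$. By Sections~3 and 4 of \cite{fioresatyendra2005} the generalized contextual group of $X_1$ is $\langle Q_1, K_{1,2}\rangle$ as in \eqref{equ:qk}, and likewise for $X_2$; by \eqref{equ:contextual_conjugation} and \eqref{equ:Qi_conjugation} these generators are sent to the corresponding generators, so the $f_{\text{ext}}$-conjugate of the generalized contextual group of $X_1$ is the generalized contextual group of $X_2$. The same isomorphism carries each contextual inversion $K_{i,j}$ with $1 \le i,j \le n_1$ of $X_1$ to the contextual inversion $K_{i,j}$ of $X_2$, hence carries the subgroup of $\langle Q_1,K_{1,2}\rangle$ generated by contextual inversions onto the corresponding such subgroup for $X_2$, which yields the asserted equivalence ``one of these generalized contextual groups is generated by contextual inversions if and only if the other is.'' All of the $f_{\text{trunc}}$ assertions, including \eqref{equ:contextual_conjugation_truncation} and \eqref{equ:Qi_conjugation_truncation}, then follow by applying the same statements to the inverse isomorphism, using $f_{\text{trunc}} = f_{\text{ext}}^{-1}$ from observation (ii); no separate computation is needed.

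The conjugation identities themselves are a routine unwinding, entirely parallel to Proposition~\ref{prop:plr_conjugated_action}, so the step that demands the most care is the last paragraph. One must track which indices survive the conjugation: the isomorphism matches up the $K_{i,j}$ only for $1 \le i,j \le n_1$, because a contextual inversion of $X_2$ involving an appended coordinate $x_{n_1+k}$ reflects about a sum that is not visible after truncation to $X_1$ and need not conjugate to a contextual inversion of $X_1$ at all. So the delicate point is to formulate the ``generated by contextual inversions'' equivalence precisely in the form that is transported by the generating family of contextual inversions with indices at most $n_1$, and, where the two segments genuinely have different interval content, to invoke the characterization of this property via the interval classes of the segment (Theorem~3.3.2(5) of \cite{fiorenoll_SIAM}) rather than relying on conjugation alone.
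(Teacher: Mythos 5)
Your treatment of the displayed identities and of the conjugated generalized contextual group is correct and is, in substance, the paper's own proof: the paper's computation rests on exactly your observation (i) --- for $1\le i,j\le n_1$ the entries $y_i,y_j$ of $Y\in TIX_2$ agree with those of its truncation $f_{\text{ext}}^{-1}(Y)$ --- combined with $TI$-equivariance in the form $f_{\text{ext}}I_{y_i+y_j}=I_{y_i+y_j}f_{\text{ext}}$; the claim \eqref{equ:Qi_conjugation} is likewise dispatched by noting that $f_{\text{ext}}$ preserves $T$-forms and $I$-forms, and the entire truncation half is reduced to the extension half via $f_{\text{trunc}}=f_{\text{ext}}^{-1}$, just as you do. Your preliminary verification that the non-tritone hypothesis forces trivial stabilizers (so that $f_{\text{ext}}$ is a well-defined $TI$-equivariant bijection) is left implicit in the paper and is a harmless addition; your phrasing via group translates $gX_1\mapsto gX_2$ versus the paper's entrywise computation is a cosmetic difference only.

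The one place you go further --- and where your own caution is warranted --- is the closing sentence about generation by contextual inversions. Your third paragraph asserts that conjugation ``carries the subgroup generated by contextual inversions onto the corresponding such subgroup for $X_2$,'' but conjugation only matches the $K_{i,j}$ with $i,j\le n_1$; the segment $X_2$ has additional contextual inversions involving the appended coordinates, so conjugation alone yields only the implication from $X_1$ to $X_2$. Your final paragraph correctly flags this, but appealing to Theorem~3.3.2(5) of \cite{fiorenoll_SIAM} does not repair the converse: when the appended entries create a new interval class the equivalence can genuinely fail as literally stated --- for instance with $X_1=\langle 1,4,7\rangle$ and $X_2=\langle 1,4,7,8\rangle$ one has $K_{1,3}K_{1,4}=Q_1$ on $TIX_2$, so the generalized contextual group of $X_2$ is generated by contextual inversions although that of $X_1$ is not (compare Remark~\ref{rem:diminished_sevenths_mystery_solution}, whose argument for $\langle 1,4,7,10\rangle$ relies precisely on no new interval classes being introduced). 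For what it is worth, the paper's proof stops at ``conjugation is an isomorphism of generalized contextual groups'' and does not argue the final sentence either, so you have matched everything the paper actually proves; but the ``if and only if'' should be understood as transported by the family $\{K_{i,j}\colon 1\le i,j\le n_1\}$, or else supplemented by a hypothesis that the extension adds no new interval classes, and your instinct that conjugation alone does not deliver it is correct.
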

\begin{proof}
Because of the $TI$-equivariance of $f_\text{ext}$, we have $f_\text{ext} I_{y_i+y_j} = I_{y_i+y_j} f_\text{ext}$ as functions, so
\begin{align*}
f_\text{ext}K_{i,j}f^{-1}_\text{ext} \langle y_1, \dots, y_{n_1}, y_{n_1+1}, \dots, y_{n_2} \rangle  &= f_\text{ext}K_{i,j}\langle y_1, \dots, y_{n_1}\rangle \\
&= f_\text{ext} I_{y_i+y_j} \langle y_1, \dots, y_{n_1} \rangle \\
&= I_{y_i+y_j} f_\text{ext} \langle y_1, \dots, y_{n_1} \rangle \\
&= I_{y_i+y_j} \langle y_1, \dots, y_{n_1}, y_{n_1+1}, \dots, y_{n_2} \rangle.
\end{align*}
The claim in equation \eqref{equ:Qi_conjugation} is fairly clear because $f_\text{ext}$ preserves the property of being a $T$-form or $I$-form. From \eqref{equ:contextual_conjugation} and \eqref{equ:Qi_conjugation} we see that conjugation with $f_\text{ext}$ is an isomorphism of generalized contextual groups.

All of the theorem statements about truncation follow from the already proved statements about extension because $f_\text{trunc} = f_\text{ext}^{-1}$.
\end{proof}

Another useful kind of $TI$-equivariant bijection that can be used in Theorem~\ref{thm:construction_of_action_on_disjoint_union} for $f$ is alterations (change one pitch class by a half-step), or more generally, modifications (replace a pitch class by another), although modifications do not conjugate $K_{i,j}$ to $K_{i,j}$ if $i$ or $j$ is the index of the changed pitch class.

\begin{theorem}[Modification of a Pitch-Class Segment and Conjugation of Contextual Groups] \label{thm:modifications}
Let $X_1=\langle x_1, \dots, x_{n} \rangle$ be a selected, fixed pitch-class segment that contains two distinct pitch-classes $x_q$ and $x_r$ which span an interval other than a tritone. Let $X_2$ be the modification of $X_1$ in which just the single pitch class $x_{j_0}$ in position $j_0$ is replaced by the pitch class $x_{j_0}'$, and suppose $X_2$ also satisfies the aforementioned tritone condition. Consider the $TI$-orbits $S_1 := TIX_1$ and $S_2 := TIX_2$, and let $f_\text{\rm mod}$ be the unique $TI$-equivariant map
$$f_\text{\rm mod}\colon \; TIX_1 \to TIX_2$$
that extends the modification $X_1 \mapsto X_2$. Then for $1 \leq i,j \leq n_1$ such that $i \neq j_0$ and $j \neq j_0$, the $f_\text{\rm mod}$-conjugation of $K_{i,j}$ on $TIX_1$ is $K_{i,j}$ on $TIX_2$. This means for $Y \in TIX_2$,
\begin{equation} \label{equ:contextual_conjugation_modification}
\begin{aligned}
f_\text{\rm mod}K_{i,j}f^{-1}_\text{\rm mod} (Y) &= I_{y_i+y_j}(Y) \\
& \overset{\text{\rm def}}{=} K_{i,j}(Y).
\end{aligned}
\end{equation}
Moreover, the $f_{\text{\rm mod}}$-conjugation of $Q_i$ on $TIX_1$ is $Q_i$ on $TIX_2$:
\begin{equation} \label{equ:Qi_conjugation_modification}
f_\text{\rm mod}Q_if^{-1}_\text{\rm mod}=Q_i.
\end{equation}
Although the $f_{\text{\rm mod}}$-conjugation of $K_{i,j_0}$ involving $j_0$ may not even be a contextual inversion, the $f_\text{\rm mod}$-conjugation of the generalized contextual group associated to $X_1$ is still nevertheless the generalized contextual group associated to $X_2$.
\end{theorem}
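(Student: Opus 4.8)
The plan is to prove the generator-level identities directly and then obtain the group-level statement from an intrinsic (centralizer) description of the generalized contextual groups. First I would verify that $f_\text{mod}$ is genuinely a $TI$-equivariant bijection: the hypothesis that $X_1$ (resp.\ $X_2$) contains two distinct pitch classes spanning an interval other than a tritone forces the entrywise $TI$-action on the orbit $S_1 = TIX_1$ (resp.\ $S_2 = TIX_2$) to be simply transitive, since a nontrivial $T_m$ fixes no pitch-class segment and $I_m$ fixes $\langle y_1,\dots,y_n\rangle$ only when $2y_i = m$ for all $i$, which would force every pair of entries to span $0$ or a tritone. Hence $|S_1| = |S_2| = 24$, the single-element assignment $X_1 \mapsto X_2$ extends uniquely to a $TI$-equivariant map (send $gX_1$ to $gX_2$), and that map is a bijection. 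Equations \eqref{equ:contextual_conjugation_modification} and \eqref{equ:Qi_conjugation_modification} then follow exactly as in the proof of Theorem~\ref{thm:conjugation_with_extension}: when $i,j \neq j_0$ the $i$-th and $j$-th entries survive the modification unchanged, so the identical entrywise computation goes through and yields $f_\text{mod}K_{i,j}f_\text{mod}^{-1} = K_{i,j}$ on $TIX_2$, while $f_\text{mod}$ preserves the property of being a $T$-form or an $I$-form, giving $f_\text{mod}Q_if_\text{mod}^{-1} = Q_i$.

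For the final sentence --- that $f_\text{mod}$-conjugation carries the whole generalized contextual group of $X_1$ onto that of $X_2$ even though the individual conjugates $f_\text{mod}K_{i,j_0}f_\text{mod}^{-1}$ need not be contextual inversions --- I would argue via centralizers, exactly as in the footnote to Remark~\ref{rem:PLRconjugation_is_gcg} and the theoretical argument in Remark~\ref{rem:diminished_sevenths_mystery_solution}. By Sections~3 and~4 of \cite{fioresatyendra2005}, the tritone condition on $X_i$ makes the generalized contextual group associated to $X_i$ equal to the centralizer $C_{\text{Sym}(S_i)}(TI)$ of the simply transitive $TI$-group inside $\text{Sym}(S_i)$, i.e.\ its Lewin dual. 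Conjugation by the bijection $f_\text{mod}$ is a group isomorphism $\text{Sym}(S_1) \to \text{Sym}(S_2)$, and the $TI$-equivariance of $f_\text{mod}$ says precisely that this isomorphism sends the $TI$-group acting on $S_1$ onto the $TI$-group acting on $S_2$. Since an isomorphism carries the centralizer of a subgroup onto the centralizer of the image of that subgroup, we get $f_\text{mod}\,C_{\text{Sym}(S_1)}(TI)\,f_\text{mod}^{-1} = C_{\text{Sym}(S_2)}(TI)$; that is, the $f_\text{mod}$-conjugate of the generalized contextual group of $X_1$ is the generalized contextual group of $X_2$. This argument never touches individual generators, so it is indifferent to whether $f_\text{mod}K_{i,j_0}f_\text{mod}^{-1}$ happens to be a contextual inversion (it is not, for instance, in the diminished-seventh example of Remark~\ref{rem:diminished_sevenths_mystery_solution}).

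I do not anticipate a serious obstacle, but two points deserve care. First, one is tempted to prove the group-level statement by a generators shortcut: when $n \geq 3$ there is always a pair $i,j$ with $i,j \neq j_0$, so $\langle Q_1, K_{i,j}\rangle$ generates the generalized contextual group on both sides by Corollary~4.4 of \cite{fioresatyendra2005}, and \eqref{equ:contextual_conjugation_modification}--\eqref{equ:Qi_conjugation_modification} finish the job; but when $n$ is small, or more generally whenever $j_0$ lies in every admissible generating pair, this shortcut is unavailable, which is exactly why I would present the centralizer argument as the actual proof. Second, the centralizer argument relies on \cite{fioresatyendra2005} identifying the generalized contextual group as the \emph{full} dual group of the $TI$-action under precisely the tritone hypothesis assumed here; I would flag this reliance explicitly, since it is the same input already used in Remark~\ref{rem:PLRconjugation_is_gcg}.
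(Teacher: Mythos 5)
Your proposal is correct and follows essentially the same route as the paper: the generator identities are proved by the same observation that the entries in positions $i,j \neq j_0$ are untouched by the modification (and the $Q_i$ statement by $TI$-equivariance), and the group-level claim rests on the same identification of the generalized contextual groups with the Lewin duals (centralizers) of the simply transitive $TI$-actions from \cite{fioresatyendra2005}. The only cosmetic difference is the final step: the paper concludes by noting that the conjugated group commutes with the $TI$-group and has 24 elements, while you transport the centralizer directly under the isomorphism $\text{Sym}(S_1)\to\text{Sym}(S_2)$ induced by the $TI$-equivariant bijection $f_\text{mod}$; both versions hinge on the same duality input, and your flag about the unavailability of the generator shortcut when every admissible pair meets $j_0$ is a fair observation.
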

\begin{proof}
Equation~\eqref{equ:contextual_conjugation_modification} holds because $i$ and $j$ avoid $j_0$, so both $Y$ and $f_\text{mod}^{-1}(Y)$ have the same respective entries in positions $i$ and $j$, namely $y_i$ and $y_j$. Equation~\eqref{equ:Qi_conjugation_modification} follows from the $TI$-equivariance of $f_\text{mod}$. The $f_\text{mod}$-conjugation of the $X_1$ generalized contextual group is equal to the $X_2$ generalized contextual group because it commutes with the $TI$-group and has 24 elements.
\end{proof}

The following modification of majors to strides and minors to strains via multiplication by 10 in \cite{fiorenollsatyendraSchoenberg} illustrates what happens when we conjugate $K_{i,j_0}$.

\begin{example}[Modification of Majors to Strides, Minors to Strains] \label{examp:strides_and_strains} The modification
$$\langle 0, 4, 7 \rangle \mapsto \langle 0, 4, 10 \rangle$$
arises from (non-invertible) multiplication with 10, and causes also the modification
$$\langle 0, -4, -7 \rangle \mapsto \langle 0, -4, -10 \rangle.$$ The resulting $TI$-equivariant bijection
\begin{equation} \label{equ:stride_bijection}
f_\text{mod}\colon\; TI\langle0,4,7 \rangle \rightarrow TI\langle 0,4,10\rangle
\end{equation}
does not conjugate $K_{2,3}$ to $K_{2,3}$:
\begin{equation} \label{equ:stride_bijection_K23}
f_\text{mod} K_{2,3} f_\text{mod}^{-1} \neq K_{2,3}.
\end{equation}
In fact we have
\begin{align*}
f_\text{mod} K_{2,3} f_\text{mod}^{-1} \langle w, w+4, w+10 \rangle & = f_\text{mod} K_{2,3} \langle w, w+4, w+7 \rangle \\
&= f_\text{mod} \langle w+11, w+7, w+4 \rangle \\
&= \langle w+11, w+7, w+1 \rangle.
\end{align*}
We see the input and final output pitch-class segments have not even a single pitch class in common, so $f_\text{mod}K_{2,3}f_\text{mod}^{-1}$ is not any contextual inversion at all, especially not $K_{2,3}$. Recall that the generalized contextual group of $\langle 0,4,7 \rangle$ (which is the $PLR$-group) is generated by just $L=K_{2,3}$ and $R=K_{1,2}$ because $LR=Q_5$. However, this is not the case for generalized contextual group of $\langle 0,4,10\rangle$ because on $TI\langle 0, 4, 10 \rangle$
\begin{equation} \label{equ:LR_for_strides}
L^{\langle 0,4,10 \rangle } R^{\langle 0, 4, 10\rangle } = K_{2,3} K_{1,2} =Q_2,
\end{equation}
where the superscript notation means ``on the $TI$-class of $\langle 0,4,10 \rangle$''.
Nevertheless, the conjugation of the $PLR$-group by \eqref{equ:stride_bijection} is equal to the generalized contextual group of $\langle 0 , 4, 10 \rangle$ by Theorem~\ref{thm:modifications}. This conjugation isomorphism is compatible with \eqref{equ:LR_for_strides} because of \eqref{equ:stride_bijection_K23}, or in other words
$$f_\text{mod} L^{\langle 0, 4, 7 \rangle} f_\text{mod}^{-1} \neq L^{\langle 0 , 4, 10 \rangle}.$$
Strides and strains were considered in Section~4 of \cite{fiorenollsatyendraSchoenberg}. In Remark~1 on page 14 of that paper concerning multiplication by 7 and jets and sharks, it is important to realize that the $f$ there is multiplication by 7 and is not the same as the $TI$-equivariant bijection arising from the modification
$$\langle 0,4,7 \rangle \mapsto \langle 0,4,1 \rangle,$$
for instance, the resulting $TI$-equivariant bijection does $\langle 1, 5, 8 \rangle \mapsto \langle 1, 5, 2 \rangle$, which is clearly quite different from multiplication by 7. Conjugation of $Q_5$ by the modification bijection is still $Q_5$, not $Q_{11}$ as a misreading of Remark 1 on page 14 of that paper would suggest.
\end{example}

\begin{example}[Consonant Triads, Diminished Seventh Chords, and Dominant/Half-Diminished Seventh Chords] \label{examp:alteration_diminished_sevenths}
Modifications and extensions can be combined and composed. The single-element assignment
$$\langle 0,4,7 \rangle \mapsto \langle 1,4,7,10 \rangle$$
is a combination of a modification and an extension, and creates a $TI$-equivariant bijection from consonant triads in dualistic root position to diminished seventh chord pitch-class segments in Table~\ref{table:seventhchord_functions}, but it does not conjugate contextual inversions to contextual inversions, as illustrated for $P$ in Remark~\ref{rem:diminished_sevenths_mystery_solution}. We can combine this with truncation to obtain the musical alteration of a dominant seventh chord to a diminished seventh chord
$$\langle 0,4,7,10 \rangle \mapsto \langle 0,4,7 \rangle \mapsto  \langle 1,4,7,10 \rangle,$$
which gives rise to the composite of an inverted bijection and bijection from Table~\ref{table:seventhchord_functions} that systematically does a musical alteration
$$\xymatrix{\dom \ar[r] & \triads \ar[r] & \diminishedsevenths.}$$
\end{example}

Moving to a different kind of example, we can also consider smaller group actions on smaller sets, such as group actions on major triads and minor triads separately, which are then unioned. We do this now for a mixed group, but later reconstruct the $TI$-group and the $PLR$-group in Examples~\ref{examp:TI_via_antiequivariantconstruction} and \ref{examp:PLR_via_antiequivariantconstruction} in a similar way after we prove an ``anti-equivariant'' version of Theorem~\ref{thm:construction_of_action_on_disjoint_union} in Theorem~\ref{thm:anti-equivariant}.
\cite{popoff2013} proposed understanding certain musical groups as group extensions and short exact sequences, such as the $TI$-group and $PLR$-group, though did not contain the following example.

\begin{example}[$T$-Group as $G$ with {\it Wechsel} $P$ as Bijection Makes a Self-Dual Group]
The group of transpositions $\langle T_1 \rangle$ acts simply transitively on the major triads in root position, and also acts simply transitively on the minor triads in reverse root position, so in Theorem~\ref{thm:construction_of_action_on_disjoint_union} we may take $G=\langle T_1 \rangle$ and take the orbits
$$S_1 := \langle T_1 \rangle \langle 0,4,7 \rangle \quad \quad \quad S_2 := \langle T_1 \rangle \langle 7,3,0 \rangle,$$
together with $P\colon \; S_1 \to S_2$ as the equivariant bijection $f$. Then Theorem~\ref{thm:construction_of_action_on_disjoint_union} constructs a simply transitive commutative group acting on \triads.

Theorem~\ref{thm:construction_of_action_on_disjoint_union} implies the group structure of this group extension is an internal direct product, so the extended group is commutative as $\langle T_1 \rangle$ was already commutative. The dual group to $\langle T_1 \rangle$ acting on major triads is of course itself, since it is commutative, so the extension of the dual group to $\langle T_1 \rangle$ (which is the dual group of the extended group of by part \ref{thm:construction_of_action_on_disjoint_union:v:H} Theorem~\ref{thm:construction_of_action_on_disjoint_union}) is the same as the extended group of $\langle T_1 \rangle$. This is expected because the extended group is already commutative, as we observed a moment ago.

Clearly, the extended group and its dual are both the same as the mixed group $\langle T_1, P\rangle$ where $T_1$ and $P$ both act on the union, so the theorem just confirms what we already know about this group. This example merely serves as a consistency check on Theorem~\ref{thm:construction_of_action_on_disjoint_union} in a familiar setting.
\end{example}

\section{Construction of a simply transitive group action on a disjoint union: anti-equivariant bijection produces an internal semi-direct product} \label{sec:anti-equivariant_disjoint_union}

Theorem~\ref{thm:construction_of_action_on_disjoint_union} worked well for equivariant bijections and direct products, but it does not apply to {\it anti-equivariant} bijections and {\it semi-direct} products. For instance, the $TI$-group and the $PLR$-group cannot be reconstructed with Theorem~\ref{thm:construction_of_action_on_disjoint_union}. Thus, we provide in Theorem~\ref{thm:anti-equivariant} a modified version of Theorem~\ref{thm:construction_of_action_on_disjoint_union} to construct an internal semi-direct product from an anti-equivariant bijection. This modified version {\it will} reconstruct the $TI$-group and the $PLR$-group, see Examples~\ref{examp:TI_via_antiequivariantconstruction} and \ref{examp:PLR_via_antiequivariantconstruction}. In Example~\ref{examp:PLR_via_antiequivariantconstruction} we have another new proof that the $PLR$-group acts simply transitively and has order 24.

The assumption of a $G$-anti-equivariant bijection $f$ in Theorem~\ref{thm:anti-equivariant} instead of a $G$-equivariant bijection as in Theorem~\ref{thm:construction_of_action_on_disjoint_union} causes some big differences. The group $\langle \overline{f} \rangle$ is no longer normal in $\overline{G}$, and the function $\overline{G} \to G$ given by $g\overline{f}^i \mapsto g$ is no longer a homomorphism. Consequently, the short exact sequence in part \ref{thm:anti-equivariant:iii:short_exact_sequence} is no longer left split and no longer isomorphic to a direct product sequence. But it is right split, and is isomorphic to a semi-direct product sequence. Another big difference is the formulation of duality for the extended groups in part \ref{thm:anti-equivariant:v:H}. The extended groups cannot both contain $f$ due to anti-equivariance, so instead we begin with two pairs $(G,f)$ and $(H,k)$, each a group with its own anti-equivariant bijection $S_1 \to S_2$, and these two pairs are required to suitably commute with each other. From this conglomeration we obtain duality between the extended groups in the anti-equivariant case. The classical duality of the $TI$-group and $PLR$-group, and its pc-seg generalization to the $TI$-group and generalized contextual group, are prime examples of this extended duality in the anti-equivariant case.

We state Theorem~\ref{thm:anti-equivariant} in its entirety because it is not a one-to-one rephrasing of Theorem~\ref{thm:construction_of_action_on_disjoint_union}. However, we skip the proofs that are straightforward modifications of proofs in Theorem~\ref{thm:construction_of_action_on_disjoint_union}. There is no part (6) in Theorem~\ref{thm:anti-equivariant} because the $H$-action on $S_2$ is {\it not} the $f$-conjugation of the $H$-action on $S_1$.

\begin{theorem}[Construction of an Extension from an Anti-Equivariant Bijection of Faithful Group Actions, and Extension of Duality in Simply Transitive Cases]
\label{thm:anti-equivariant}
Suppose a group $G$ acts faithfully on two disjoint sets $S_1$ and $S_2$ and suppose $f\colon \; S_1 \to S_2$ is a bijection that is $G$-anti-equivariant in the sense that
$f( g \cdot s)=g^{-1} \cdot f(s)$ for all $g \in G$ and all $s \in S_1$. Let $\overline{S}:=S_1 \bigsqcup  S_2$ and define $\overline{f}\colon \; \overline{S} \to \overline{S}$ by
$$\xymatrix@C=3pc{ S_1 \bigsqcup S_2  \ar[r]_{f \bigsqcup f^{-1}} \ar@/^1pc/[rr]^{\overline{f}} & \ar@{=}[r] S_2 \bigsqcup S_1 & S_1 \bigsqcup S_2 }.$$
Let every $g$ in $G$ act on the disjoint union $\overline{S}$ as $g \bigsqcup  g$, so that $G$ also acts on $\overline{S}$ faithfully and we may consider $G$ as a subgroup of $\text{\rm Sym}(\overline{S})$ via the associated embedding $G \hookrightarrow \text{\rm Sym}(\overline{S})$. Let $\overline{G}:=\langle G, \overline{f} \rangle$, generated inside of $\text{\rm Sym}(\overline{S})$. Then the following statements hold.
\begin{enumerate}
    \item \label{thm:anti-equivariant:i:anti-equivariance}
    $\overline{f}$ is a $G$-anti-equivariant involution.
    \item \label{thm:anti-equivariant:ii:internal_direct_product}
    $G$ is normal in $\overline{G}$, and $\overline{G}$ is the internal semi-direct product of $G \rtimes \langle\overline{f}\rangle$. The conjugation of elements of $G$ by $\overline{f}$ is inversion, that is $\overline{f} g \overline{f}^{-1} = g^{-1}$. Moreover, $\overline{G}$ is the disjoint union $$\overline{G}=G \cup \overline{f}G=G \cup G\overline{f}$$ where $G$ is exactly the set of those transformations in $\overline{G}$ that preserve $S_1$ and $S_2$, and the coset $\overline{f}G=G\overline{f}$ is exactly the set of those transformations in $\overline{G}$ that exchange $S_1$ and $S_2$.
    \item \label{thm:anti-equivariant:iii:short_exact_sequence}
    The groups $G$, $\overline{G}$, and $\langle \overline{f} \rangle$ fit into the following short exact sequence, where the third homomorphism $\overline{G} \to \langle \overline{f} \rangle$ is $g \overline{f}^i \mapsto \overline{f}^i$.
    $$\{Id_{\overline{S}}\}\longrightarrow G\longrightarrow \overline{G}\longrightarrow \langle \overline{f}\rangle\longrightarrow \{Id_{\overline{S}}\}$$
    This short exact sequence is right split, so is isomorphic to the semi-direct product short exact sequence for $G \rtimes \langle \overline{f} \rangle$, which we already know from part \ref{thm:anti-equivariant:ii:internal_direct_product}.
    \item  \label{thm:anti-equivariant:iv:simple_transitivity}
    If the action of $G$ on $S_1$ is simply transitive, then the action of $G$ on $S_2$ is also simply transitive, and moreover the action of $\overline{G}$ on the disjoint union $\overline{S}$ is also simply transitive.
    \item
    \label{thm:anti-equivariant:v:H}
    Suppose the action of $G$ on $S_1$ is simply transitive. Suppose $H$ is another group that acts on $S_1$ and $S_2$, and suppose that $k\colon \; S_1 \to S_2$ is an $H$-anti-equivariant bijection. Define $\overline{k} = k \bigsqcup k^{-1}$. Suppose $H$ acts simply transitively on $S_1$, so that parts \ref{thm:anti-equivariant:i:anti-equivariance}, \ref{thm:anti-equivariant:ii:internal_direct_product}, \ref{thm:anti-equivariant:iii:short_exact_sequence}, and \ref{thm:anti-equivariant:iv:simple_transitivity} apply to $H$ and $k$. Suppose
    \begin{itemize}
    \item
    $k^{-1}f = f^{-1}k$,
    \item
    the actions of $G$ and $H$ on $S_1$ commute,
    \item
    $f$ is $H$-equivariant, and
    \item
    $k$ is $G$-equivariant.
    \end{itemize}
    Then the actions of $G$ and $H$ on $S_2$ commute, $\overline{f}$ and $\overline{k}$ commute, and $\overline{G}\overset{\text{\rm def}}{=}\langle G, \overline{f} \rangle$ and $\overline{H}\overset{\text{\rm def}}{=} \langle H, \overline{k}\rangle $ are dual groups in $\text{\rm Sym}(\overline{S})$.
    \end{enumerate}
\end{theorem}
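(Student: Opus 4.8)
The plan is to bootstrap off the already-established parts. The hypotheses guarantee that parts~\ref{thm:anti-equivariant:i:anti-equivariance}--\ref{thm:anti-equivariant:iv:simple_transitivity} apply both to $(G,f)$ and to $(H,k)$, so part~\ref{thm:anti-equivariant:iv:simple_transitivity} gives at once that $\overline{G}=\langle G,\overline{f}\rangle$ and $\overline{H}=\langle H,\overline{k}\rangle$ both act simply transitively on $\overline{S}$. By Proposition~3.1 of \cite{BerryFiore}---two simply transitive subgroups of a symmetric group that commute elementwise are Lewin dual, each being the centralizer of the other---it then suffices to prove that $\overline{G}$ and $\overline{H}$ commute elementwise in $\text{Sym}(\overline{S})$. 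Since the centralizer of a fixed permutation is a subgroup, and since $\overline{G}$ is generated by $G\cup\{\overline{f}\}$ and $\overline{H}$ by $H\cup\{\overline{k}\}$, it is enough to verify that each of the four generator pairs $\{G,H\}$, $\{G,\overline{k}\}$, $\{\overline{f},H\}$, $\{\overline{f},\overline{k}\}$ commutes; in the course of doing this we also pick up the two standalone assertions of part~\ref{thm:anti-equivariant:v:H}, namely that the actions of $G$ and $H$ on $S_2$ commute and that $\overline{f}$ and $\overline{k}$ commute.

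I would start with $\{\overline{f},\overline{k}\}$. On the summand $S_1$, the composite $\overline{f}\,\overline{k}$ is the self-map $f^{-1}k$ of $S_1$, while $\overline{k}\,\overline{f}$ is $k^{-1}f$, and these agree by the hypothesis $k^{-1}f=f^{-1}k$. On the summand $S_2$, the composite $\overline{f}\,\overline{k}$ is $fk^{-1}$ and $\overline{k}\,\overline{f}$ is $kf^{-1}$; the key observation is that the very same hypothesis forces these to agree, since $fk^{-1}=f(k^{-1}f)f^{-1}=f(f^{-1}k)f^{-1}=kf^{-1}$. Hence $\overline{f}$ and $\overline{k}$ commute.

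Next I would show that the actions of $G$ and $H$ on $S_2$ commute; this is the other standalone claim of part~\ref{thm:anti-equivariant:v:H}, and it also settles the pair $\{G,H\}$ on the $S_2$-summand (on $S_1$ they commute by hypothesis). Fix $s_2\in S_2$ and write $s_2=f(s_1)$. Moving $h$ across $f$ by the $H$-equivariance of $f$, and moving $g$ across $f$ by the $G$-anti-equivariance of $f$ at the cost of passing to $g^{-1}$, one finds $g\cdot(h\cdot s_2)=f\bigl(g^{-1}\cdot(h\cdot s_1)\bigr)$ and $h\cdot(g\cdot s_2)=f\bigl(h\cdot(g^{-1}\cdot s_1)\bigr)$; these coincide because $g^{-1}$ and $h$ commute in the action on $S_1$. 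The remaining two pairs are mirror images of each other and are quicker: $\{G,\overline{k}\}$ commutes because on $S_1$ it unwinds to the $G$-equivariance of $k$ (that is, $g\cdot k(s_1)=k(g\cdot s_1)$) and on $S_2$ to the same identity read through $k^{-1}$; and $\{\overline{f},H\}$ commutes for the symmetric reason, using the $H$-equivariance of $f$. With all four generator pairs commuting, $\overline{G}$ and $\overline{H}$ commute elementwise, and Proposition~3.1 of \cite{BerryFiore} upgrades this to the desired duality in $\text{Sym}(\overline{S})$.

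I do not anticipate a real obstacle: the content is the disciplined bookkeeping of which restriction ($f$ or $f^{-1}$, and the action on $S_1$ or on $S_2$) governs each summand of $\overline{S}$, arranged so that each of the four hypotheses---$k^{-1}f=f^{-1}k$, commutation of $G$ and $H$ on $S_1$, $H$-equivariance of $f$, $G$-equivariance of $k$---is deployed exactly where it is needed. The only step that is not purely mechanical is the observation that $k^{-1}f=f^{-1}k$ by itself delivers both halves of the commutation of $\overline{f}$ and $\overline{k}$; everything else is an unwinding of definitions, after which the reduction ``simply transitive $+$ elementwise commuting $\Rightarrow$ dual'' (already used for the equivariant case in the proof of Theorem~\ref{thm:construction_of_action_on_disjoint_union}) closes the argument.
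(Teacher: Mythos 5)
Your treatment of part~\ref{thm:anti-equivariant:v:H} is correct and is essentially the paper's own argument: you check that the generator pairs commute (conjugating the hypothesis $k^{-1}f=f^{-1}k$ by $f$ to get $fk^{-1}=kf^{-1}$, pushing $h$ through $f$ by $H$-equivariance and $g$ through $f$ by anti-equivariance to see that $G$ and $H$ commute on $S_2$, and using the $G$-equivariance of $k$ and $H$-equivariance of $f$ for the mixed pairs), and then invoke the result of \cite{BerryFiore} that two commuting simply transitive subgroups of $\text{Sym}(\overline{S})$ are Lewin dual. That is exactly how the paper closes part~\ref{thm:anti-equivariant:v:H}, and your use of parts~\ref{thm:anti-equivariant:i:anti-equivariance}--\ref{thm:anti-equivariant:iv:simple_transitivity} for both $(G,f)$ and $(H,k)$ inside that argument is legitimate, since the hypotheses of part~\ref{thm:anti-equivariant:v:H} license it.

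The gap is that the statement to be proved is the whole theorem, and your proposal proves only part~\ref{thm:anti-equivariant:v:H}, treating parts~\ref{thm:anti-equivariant:i:anti-equivariance}--\ref{thm:anti-equivariant:iv:simple_transitivity} as ``already established.'' They are not: they are part of the claim, and parts~\ref{thm:anti-equivariant:ii:internal_direct_product} and \ref{thm:anti-equivariant:iii:short_exact_sequence} are precisely where the anti-equivariant case departs from Theorem~\ref{thm:construction_of_action_on_disjoint_union}, so they cannot be imported from it. For part~\ref{thm:anti-equivariant:ii:internal_direct_product} one must show $\overline{f}g\overline{f}^{-1}=g^{-1}$ (so $G$ is normal, $\langle\overline{f}\rangle$ is not, and $\overline{G}$ is an internal \emph{semi-direct} product), that every word in $G$ and $\overline{f}$ can be rewritten in the form $g\overline{f}^{\,i}$, and that $G\cap\langle\overline{f}\rangle$ is trivial because $\overline{f}$ exchanges $S_1$ and $S_2$; for part~\ref{thm:anti-equivariant:iii:short_exact_sequence} one must observe the sequence is \emph{right} split via $\overline{f}^{\,i}\mapsto\overline{f}^{\,i}$, while the set-theoretic retraction $g\overline{f}^{\,i}\mapsto g$ is not a homomorphism, so the sequence is a semidirect-product sequence and not left split as in the equivariant theorem. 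Parts~\ref{thm:anti-equivariant:i:anti-equivariance} and \ref{thm:anti-equivariant:iv:simple_transitivity} do follow by routine modification of the equivariant proofs (replace equivariance equalities by anti-equivariance ones), but even that needs to be stated; as written, the bulk of the theorem's structural content is left unproved.
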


\begin{proof}
\begin{enumerate}
\item\textbf{$\overline{f}$ is $G$-anti-equivariant and has order 2.}
This proof is completely analogous to the proof of Theorem~\ref{thm:construction_of_action_on_disjoint_union}~\ref{thm:construction_of_action_on_disjoint_union:i:equivariance}, we merely replace every external $g$ by $g^{-1}$.

\item\textbf{$\overline{G}$ is an Internal Semi-Direct Product.} Let $\overline{G} = \langle G, \overline{f} \rangle$.
Since $\overline{f}$ is $G$-anti-equivariant by part \ref{thm:anti-equivariant:i:anti-equivariance}, $G$ is normal in $\overline{G}$
because
$$\overline{f} g \overline{f}^{-1} = g^{-1} \overline{f} \, \overline{f}^{-1} = g^{-1}.$$
Also from the $G$-anti-equivariance of $\overline{f}$,
any product of elements of $G$ and powers of $\overline{f}$ can be moved into an element of the form $g\overline{f}^i$, so $\overline{G}=G \cdot \langle \overline{f} \rangle$. Lastly, $G \cap \langle \overline{f} \rangle = Id_{\overline{S}}$ because every element of $G$ preserves $S_1$ and $S_2$ while $\overline{f}$ exchanges $S_1$ and $S_2$ and is an involution, so $\overline{G}$ is the internal semi-direct product $G \rtimes \langle \overline{f} \rangle$.

Since $\overline{G} = G \rtimes \langle \overline{f} \rangle$, every element of $\overline{G}$ can be written uniquely as $g \overline{f}^i$ for some $g \in G$ and some $i=0,1$ (recall $\overline{f}$ is an involution), so there are only two right $G$-cosets, $G$ and $G\overline{f}$. This implies there are only two left $G$-cosets, which must be $G$ and $\overline{f}G$, so $\overline{f}G=G\overline{f}$. We now have
$$\overline{G}=G \cup \overline{f}G=G \cup G\overline{f}.$$ Since $G$ preserves $S_1$ and $S_2$, and $\overline{f}$ exchanges $S_1$ and $S_2$, we know each element of $\overline{f}G=G\overline{f}$ exchanges $S_1$ and $S_2$.

\item\textbf{Short Exact Sequence.}
The homomorphism $\overline{G} \to \langle \overline{f} \rangle$ given by the formula $g \overline{f}^i \mapsto \overline{f}^i$ is clearly surjective onto $\langle \overline{f} \rangle$, and the kernel is clearly $G$, so we have the short exact sequence.

A right inverse homomorphism for the surjection $\overline{G} \to \langle \overline{f} \rangle$ is the inclusion $\overline{f}^i \mapsto \overline{f}^i$, so the short exact sequence is isomorphic to the semi-direct product short exact sequence, see Theorem~3.3 of the lecture note \cite{conradkeith_splitting}. Notice the set-theoretic left inverse $g\overline{f}^i \mapsto g$
for the inclusion $G \rightarrow \overline{G}$ is {\it not} a homomorphism because $\overline{f}$ is $G$-anti-equivariant.

\item\textbf{Simple Transitivity of $\overline{G}$.}
This proof is completely analogous to the proof of Theorem~\ref{thm:construction_of_action_on_disjoint_union}~\ref{thm:construction_of_action_on_disjoint_union:iv:simple_transitivity}, anti-equivariance equalities merely replace the equivariance equalities.

\item\textbf{Duality of $\overline{H}$ and $\overline{G}$.}
We now have two pairs $(G,f)$ and $(H, k)$ that act simple transitively on $S_1$ and suitably commute with each other, as enumerated in the hypotheses.

The actions of $G$ and $H$ on $S_2$ commute because
\begin{align*}
ghs_2 & = ghf(s_1) \\
      &=  f(g^{-1}hs_1) \\
      &=  f(hg^{-1}s_1) \\
      &=  hgf(s_1) \\
      &=  hgs_2.
\end{align*}

The bijections $\overline{f}$ and $\overline{k}$ commute because $f$ conjugation of the assumed equation $k^{-1} f = f^{-1} k$ yields the other half of the equation $\overline{f} \, \overline{k} = \overline{k} \, \overline{f}$.

From Theorem~\ref{thm:construction_of_action_on_disjoint_union}~\ref{thm:construction_of_action_on_disjoint_union:i:equivariance}, $\overline{f}$ is $H$-equivariant and $\overline{k}$ is $G$-equivariant, so now $\overline{G}$ and $\overline{H}$ commute inside $\text{Sym}(\overline{S})$. Each acts simply transitively on $\overline{S}$ from part \ref{thm:anti-equivariant:iv:simple_transitivity}. From commutativity and simply transitivity of both groups, Theorem~3.1 of \cite{BerryFiore} finally allows us to conclude that $\overline{G}$ and $\overline{H}$ are dual groups.
\end{enumerate}
\end{proof}

Now we can reconstruct the $TI$-group and $PLR$-group, and also their duality, and as a bonus show that the $PLR$-group is the same as the {\it Schritt-Wechsel} group.

\begin{example}[Reconstruction of $TI$-Group from $\langle T_1 \rangle$ and Anti-Equivariant $I_0$] \label{examp:TI_via_antiequivariantconstruction}
We would like to obtain the following expected isomorphism of short exact sequences from Theorem~\ref{thm:anti-equivariant}~\ref{thm:anti-equivariant:iii:short_exact_sequence}.
\begin{equation} \label{equ:ses_for_TI}
\vcenter{
\xymatrix{\{Id_{\overline{S}}\} \ar[r] & \langle T_1 \rangle \ar[r] & TI\text{-group} \ar[r] & \langle I_0 \rangle \ar[r] & \{Id_{\overline{S}}\} \\
\{0\} \ar[r] \ar[u]^\cong & \mathbb{Z}_{12} \ar[r] \ar[u]^\cong & \mathbb{Z}_{12} \rtimes \{\pm 1\} \ar[r] \ar[u]^\cong  & \{\pm 1\} \ar[r] \ar[u]^\cong  & \{1\} \ar[u]^\cong  }
}
\end{equation}
We take $G=\langle T_1\rangle$, $S_1 = \langle T_1 \rangle \langle 0, 4, 7\rangle$, and $S_2 = \langle T_1 \rangle \langle 7, 3, 0\rangle$. Then $I_0\colon \; S_1 \to S_2$ is $G$-anti-equivariant
$$I_0 T_n(s_1) = (T_{n})^{-1} I_0(s_1).$$
Theorem~\ref{thm:anti-equivariant} applies, $\overline{I_0}$ is $I_0$, and $\overline{G}=\langle T_1, I_0 \rangle$ is the $TI$-group acting simply transitively on $\overline{S} = \triads$. Thus we obtain the desired isomorphism of short exact sequences in \eqref{equ:ses_for_TI}. Conjugation by $I_0$ is group element inversion
$$I_0^{-1} T_n I_0 = (T_n)^{-1},$$
which corresponds to additive inversion in $\mathbb{Z}_{12}$ in the bottom short exact sequence. Our theorem smoothly embeds the $TI$-action on consonant triads into the familiar expression of dihedral groups as semi-direct products via short exact sequences.
\end{example}

Theorem~\ref{thm:anti-equivariant} applies to the neo-Riemannian $PLR$-group in two ways, corresponding to the two ways of writing the $PLR$-group: in terms of $L$ and $R$, or in terms of {\it Schritte} and {\it Wechsel}.

\begin{example}[Reconstruction of $PLR$-Group from $\langle LR \rangle$ and Anti-Equivariant $R$, and also from {\it Schritt} group $\langle Q_1 \rangle$ and Anti-Equivariant {\it Wechsel} $W_0=P$] \label{examp:PLR_via_antiequivariantconstruction}
We would like to obtain the following expected isomorphisms of short exact sequences from Theorem~\ref{thm:anti-equivariant}~\ref{thm:anti-equivariant:iii:short_exact_sequence}.
\begin{equation} \label{equ:ses_for_plr}
\vcenter{
\xymatrix{\{Id_{\overline{S}}\} \ar[r] & \langle LR \rangle \ar[r] & PLR\text{-group} \ar[r] & \langle R \rangle \ar[r] & \{Id_{\overline{S}}\} \\
\{0\} \ar[r] \ar[u]^\cong \ar[d]_\cong & \mathbb{Z}_{12} \ar[r] \ar[u]^\cong \ar[d]_\cong & \mathbb{Z}_{12} \rtimes \{\pm 1\} \ar[r] \ar[u]^\cong \ar[d]_\cong & \{\pm 1\} \ar[r] \ar[u]^\cong \ar[d]_\cong & \{1\} \ar[u]^\cong \ar[d]_\cong \\
\{Id_{\overline{S}}\} \ar[r] & \langle Q_1 \rangle \ar[r] & PLR\text{-group} \ar[r] & \langle P \rangle \ar[r] & \{Id_{\overline{S}}\} }
}
\end{equation}
For the top row, we take $G=\langle LR\rangle$, $S_1 = \langle LR \rangle \langle 0, 4, 7\rangle$, and $S_2 = \langle LR \rangle \langle 7, 3, 0\rangle$.  The composite $LR$ means first do $R$, then do $L$, and the notation $\langle LR \rangle$ means the group generated by the sole generator $LR$. The composite $LR$ (on \triads) transposes major chords up by a perfect fourth, and minor chords down by a perfect fourth,\footnote{The alternating application of $R$ then $L$, starting with $C$, is a famous chord progression in Beethoven's {\it Ninth Symphony} studied in \cite{cohn1997}. See page 487 of \cite{cransfioresatyendra} for an application of this chord progression to prove that the $PLR$-group is generated by $L$ and $R$ and is dihedral of order 24. In the present example of the present paper, we are considering the composite function $LR$ rather than the alternating sequence $R$ then $L$.} so has order 12, as 5 is relatively prime to 12. The restrictions of $LR$ to $S_1$ and $S_2$ also have order 12 for the same reason. Note that $LR$ is equal to the {\it Schritt} $Q_5$.

For the top row we take the $G$-anti-equivariant bijection $f$ to be $R$. This bijection is anti-equivariant with respect to the group $G=\langle LR \rangle$, as we confirm with the generator $LR$:
\begin{align*}
f \circ LR &= R \circ LR \\
&= R(LR) \\
&= (RL)R \\
&= (LR)^{-1} R \\
&= (LR)^{-1} \circ f.
\end{align*}
Theorem~\ref{thm:anti-equivariant} applies, $\overline{R}$ is $R$, and $\overline{G}=\langle LR, R\rangle$ acts simply transitively on $\overline{S} = \triads$, and we obtain the desired isomorphism of short exact sequences in the top half of \eqref{equ:ses_for_plr}. Conjugation by $R$ is group element inversion
$$R^{-1} (LR) R = RL = (LR)^{-1}$$
which corresponds to additive inversion in $\mathbb{Z}_{12}$ in the middle short exact sequence.

What the demonstration so far has {\it not} shown is that $\overline{G}\overset{\text{def}}{=}\langle LR, R\rangle$ is the same as $\langle P, L, R \rangle$. However, that follows from the two equalities:
\begin{equation} \label{equ:P_formula}
P=R(LR)^3
\end{equation}
\begin{equation} \label{equ:same_generated_group}
\langle L, R\rangle = \langle LR, R \rangle.
\end{equation}
Equality \eqref{equ:P_formula} follows from the argument on page 487 of \cite{cransfioresatyendra}.
Equality \eqref{equ:same_generated_group} is basic algebra (show each set of generators can be written in terms of the other set of generators). We now have yet another proof that the $PLR$-group, defined as $\langle P,L,R \rangle$, is dihedral of order 24 and acts simply transitively on \triads.

In summary we have
$$ \langle L,R \rangle = \langle LR, R \rangle = \langle LR \rangle \rtimes \langle R \rangle \cong \mathbb{Z}_{12} \rtimes \{\pm 1\}.$$ 

For the bottom row of \eqref{equ:ses_for_plr}, we use the {\it Schritt} $Q_1$ and the {\it Wechsel} $W_0=P$. We take $G=\langle Q_1 \rangle$, $S_1 = \langle Q_1 \rangle \langle 0, 4, 7\rangle$, and $S_2 = \langle Q_1 \rangle \langle 7, 3, 0\rangle$. We select the $G$-anti-equivariant bijection to be $P$.

Theorem~\ref{thm:anti-equivariant} applies, $\overline{P}$ is $P$, and $\overline{G}=\langle Q_1, P \rangle$ acts simply transitively on $\overline{S} = \triads$, and we obtain the desired isomorphism of short exact sequences in the bottom half of \eqref{equ:ses_for_plr}. Conjugation by $P$ is group element inversion
$$P^{-1} Q_n P = (Q_n)^{-1}$$
which corresponds to additive inversion in $\mathbb{Z}_{12}$ in the middle short exact sequence. The group $\overline{G}=\langle Q_1, P \rangle$ will be equal to $\langle LR, R\rangle$ by the uniqueness of the dual group to the $TI$-group acting on \triads, as soon as we know that both of the constructions are dual to the $TI$-group in the next example.
\end{example}

\begin{example}[Duality of $TI$-Group and $PLR$-Group] \label{examp:anti-equivariant_duality_TI-PLR}
We would now like to apply part \ref{thm:anti-equivariant:v:H} of Theorem~\ref{thm:anti-equivariant} to Examples~\ref{examp:TI_via_antiequivariantconstruction} and \ref{examp:PLR_via_antiequivariantconstruction}.
We take $$G=\langle T_1 \rangle, \quad f=I_0, \quad H=\langle LR \rangle, \quad \text{and} \quad k=R.$$
The hypotheses of part \ref{thm:anti-equivariant:v:H} are known to be true, so $\langle T_1, I_0\rangle$ and $\langle LR, R\rangle$ are dual groups.

Alternatively, we can use the second part of Example~\ref{examp:PLR_via_antiequivariantconstruction}. We take $$G=\langle T_1 \rangle, \quad f=I_0, \quad H=\langle Q_1 \rangle, \quad \text{and} \quad k=P.$$
The hypotheses of part \ref{thm:anti-equivariant:v:H} are known to be true, so $\langle T_1, I_0\rangle$ and $\langle Q_1, P\rangle$ are dual groups.

Finally,  $\langle LR, R\rangle= \langle Q_1, P\rangle$, and we obtain another proof of the equality of the $PLR$-group and the {\it Schritt-Wechsel} group (using $\langle P,L,R \rangle= \langle L, R \rangle = \langle LR, R\rangle$ from before).
\end{example}

\begin{example}[Reconstruction of the Generalized Contextual Group from $\langle Q_1 \rangle$ and Anti-Equivariant $K_{1,2}$] \label{examp:generalizedcontextualgroupagain}
The $PLR$-group is one particular example of a generalized contextual group, as defined in \cite{fioresatyendra2005}. It is easy to modify the (second) application of Theorem~\ref{thm:anti-equivariant} in Example~\ref{examp:PLR_via_antiequivariantconstruction} to reconstruct the generalized contextual group of a pitch-class segment and its duality. We would like to obtain the following isomorphism of short exact sequences.
\begin{equation} \label{equ:ses_for_generalizedcontextualgroup}
\vcenter{
\xymatrix{\{Id_{\overline{S}}\} \ar[r] & \langle Q_1 \rangle \ar[r] & \text{Gen. Contextual Group} \ar[r] & \langle K_{1,2} \rangle \ar[r] & \{Id_{\overline{S}}\} \\
\{0\} \ar[r] \ar[u]^\cong & \mathbb{Z}_{12} \ar[r] \ar[u]^\cong & \mathbb{Z}_{12} \rtimes \{\pm 1\} \ar[r] \ar[u]^\cong  & \{\pm 1\} \ar[r] \ar[u]^\cong  & \{1\} \ar[u]^\cong  }
}
\end{equation}

Recall the review of the generalized contextual group of a pitch-class segment in Example~\ref{examp:generalizedcontextualgroup}. Let $X=\langle x_1, \dots, x_{n}\rangle$ be a selected, fixed pitch-class segment that contains two distinct pitch-classes $x_q$ and $x_r$ which span an interval other than a tritone (in the present example we are {\it not} doing a pitch-class segment extension as in Example~\ref{examp:generalizedcontextualgroup}).

We take $$G=\langle Q_1 \rangle,\quad S_1 =\langle Q_1 \rangle X, \quad \text{and} \quad S_2 = K_{1,2}(S_1)=\langle Q_1 \rangle K_{1,2}X.$$
This means $S_1$ is the set of transposed forms of $X$ and $S_2$ is the set of inverted forms of $X$.
We take $K_{1,2} \colon \; S_1 \to S_2$ to be the $G$-anti-equivariant bijection $f$. We write $\overline{f} = K_{1,2} \bigsqcup K_{2,1}$ as in the Theorem~\ref{thm:anti-equivariant} construction, but since $K_{1,2} = K_{2,1}$, we simply have $\overline{f} = K_{1,2}$.

It is clear that $G$ acts simply transitively on both sets $S_1$ and $S_2$.  Therefore, by Theorem~\ref{thm:anti-equivariant}, $\langle Q_1, K_{1,2}\rangle$ acts simply transitively on $\overline{S}=$ the set of both transposed and inverted forms of $X$, and $\langle Q_1, K_{1,2}\rangle$ is the internal semi-direct product of $\langle Q_1 \rangle$ and $\langle K_{1,2}\rangle$, written as $\langle Q_1,K_{1,2}\rangle=\langle Q_1\rangle\rtimes \langle K_{1,2}\rangle $.

The hypotheses of part \ref{thm:anti-equivariant:v:H} of Theorem~\ref{thm:anti-equivariant} are known to be true from \cite{fioresatyendra2005}, so $\langle T_1, I_0\rangle$ and $\langle Q_1, K_{1,2}\rangle$ are dual groups.
\end{example}

\section{$TI$-Equivariant bijections from consonant triads to other seventh chords}\label{sec:bijections_of_triads_and_seventh_chords}

In Section~\ref{sec:notations} we introduced a bijection $f$ between consonant triads and dominant/half-diminished seventh chords in equations \eqref{equ:bijection_triads_domhalfdiminished} and \eqref{equ:2y-x}. More precisely, this is a bijection from the set class of $\langle 0,4,7 \rangle$ to the set class of $\langle 0,4,7,10\rangle$, in symbols:
\begin{equation}
f\colon \; TI\langle 0,4,7 \rangle \to TI\langle 0,4,7,10 \rangle.
\end{equation}
We indicated the function in three different ways:
\begin{enumerate}
\item
the verbal formula:
``map a consonant triad to the unique dominant/half-diminished seventh chord containing it,'' which means \\ $\langle w, w+4, w+7 \rangle \mapsto \langle w, w+4, w+7, w+10 \rangle$ \\and \\$\langle w, w-4, w-7 \rangle \mapsto \langle w, w-4, w-7, w-10 \rangle$,
\item
the linear formula: $\langle w, x, y \rangle \mapsto \langle w, x, y, 2y-x \rangle$, and
\item
the unique $TI$-equivariant extension of the single-element assignment
\begin{equation} \label{equ:single-element_dominant}
\langle 0, 4, 7 \rangle \mapsto \langle 0, 4, 7, 10 \rangle
\end{equation}
to the $TI$-orbits.
\end{enumerate}
The third method has a distinct advantage, as it is a systematic framework for three things: the required $TI$-equivariance of Proposition~\ref{prop:TI-equivariance_contextual_extension}, the $f$-conjugation consistency of $P$, $L$, and $R$ in Proposition~\ref{prop:plr_conjugated_action}, and the extended duality between $TI$-group actions and generalized contextual groups, as explained in Example~\ref{examp:generalizedcontextualgroup} and Theorem~\ref{thm:conjugation_with_extension}. This third method and its benefits also apply to the equivariant bijections of consonant triads with major sevenths and minor sevenths via the single-element assignments
\begin{equation} \label{equ:single-element_majorsevenths}
\langle 0,4,7 \rangle \mapsto \langle 0,4,7,11 \rangle
\end{equation}
\begin{equation} \label{equ:single-element_minorsevenths}
\langle 0,4,7 \rangle \mapsto \langle 0,4,7,9 \rangle
\end{equation}
by Example~\ref{examp:generalizedcontextualgroup} and Theorem~\ref{thm:conjugation_with_extension}. Moreover, conjugation of any contextual inversion with either of these bijections is also a contextual inversion. This third method also works for the single-element assignment
\begin{equation} \label{equ:single-element_diminishedsevenths}
\langle 0,4,7 \rangle \mapsto \langle 1,4,7,10 \rangle
\end{equation}
by Theorem~\ref{thm:modifications} and Example~\ref{examp:alteration_diminished_sevenths}, although conjugation of $P$ and $R$ by this bijection are not contextual inversions.

In this section we remind the reader about these three other seventh chords, make a few remarks about the formalization of their collections as $TI$-orbits of certain pitch-class segments, and explain the bijections in more detail. In particular, we give linear formulas for the bijections, and prove these formulas are $TI$-equivariant, as we will need to apply Theorem~\ref{thm:generalized}.

\begin{figure}
\begin{center}
\includegraphics[scale = 0.4]{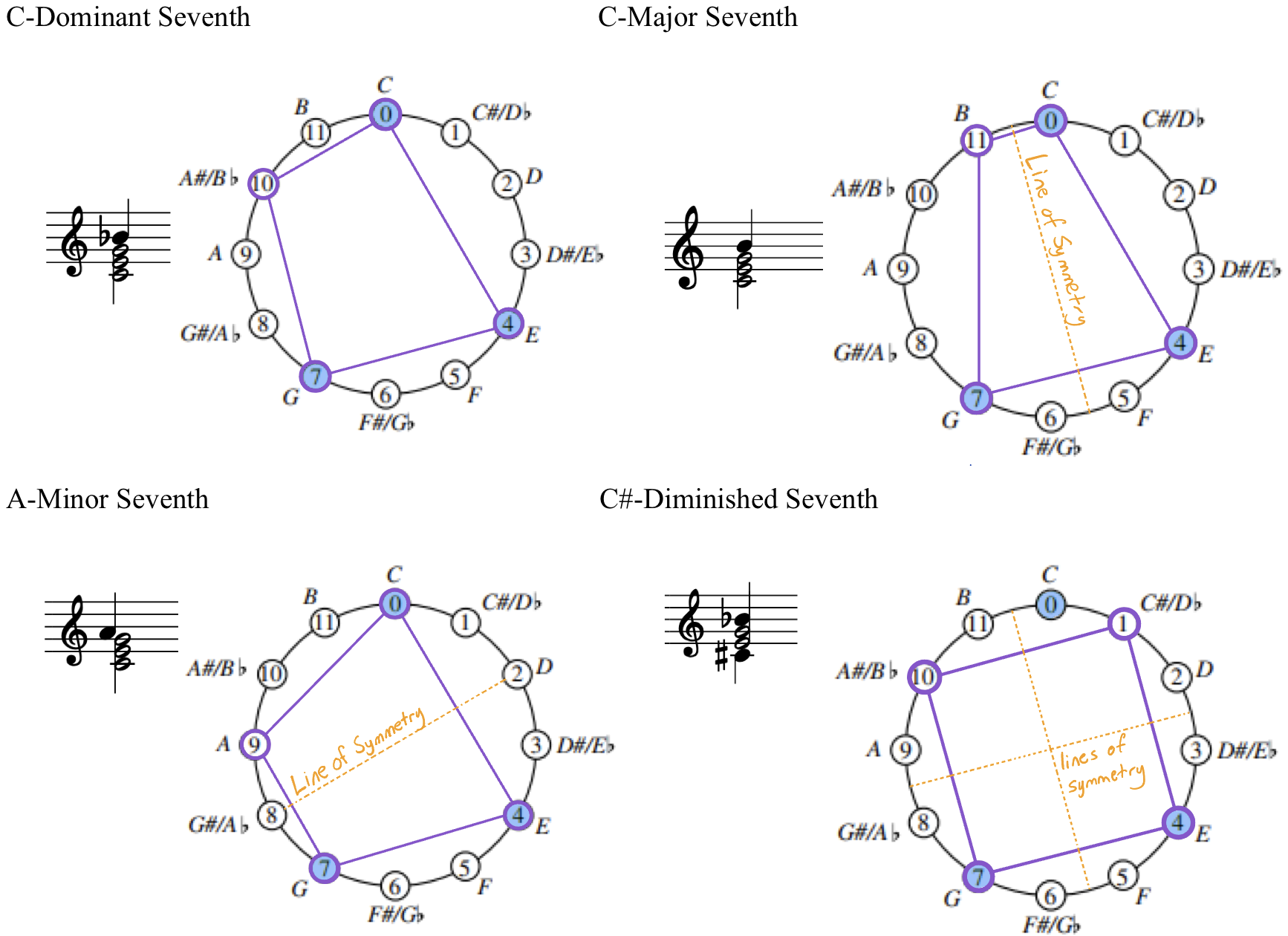}
\end{center}
\caption{Here we see representatives of the 4 different $TI$-orbits of seventh chord pitch-class segments from Table~\ref{table:seventhchord_sets}, as well as how the $C$-major chord is contained (or only partially contained) in these representative seventh chords.  The lines of symmetry of each representative seventh chord are also shown. The reflection of an ascending dominant seventh pc-seg is a descending half-diminished pc-seg (not pictured), so the $TI$-class of the dominant seventh pc-seg $C^7$ contains also the half-diminished seventh pc-segs. Any reflection of an ascending major seventh pc-seg is a descending major seventh pc-seg. Any reflection of a minor seventh chord is another minor seventh chord in opposite direction (note: we choose to write the generating minor seventh pc-seg $A\textsuperscript{-7}$ as $\langle 0,4,7,9 \rangle$ with the root 9 moved last so that the $C$-major chord is its first three entries). Major sevenths and minor sevenths are {\it not} related to each other via reflection (not even as pitch-class sets), against our intuition from consonant triads. Any reflection of an ascending diminished seventh pc-seg is a descending diminished seventh chord. The root entry for our pc-seg selections is indicated in the fourth column of Table~\ref{table:seventhchord_sets}. In the present figure, the shading on the note heads and the musical clock points shows how the $C$-major triad is embedded in $C^7$, $C^{\triangle 7}$, and $A^{-7}$. The $C$-major triad is not embedded in any diminished seventh, but two of its notes are in $C\sh\textsuperscript{o7}$. \label{fig:seventh_images} }
\end{figure}

\begin{table}
    \renewcommand{\arraystretch}{1.2}
    \centering
    \begin{tabular}{|p{9em}|p{5.5em}|p{9.5em}|p{2.5em}|p{6em}|}
    \hline
    \multirow{2}{*}{Set Name} & \multirow{2}{4em}{Set as \\ $TI$-Orbit} & \multirow{2}{*}{Elements} & \multirow{2}{*}{Root} & \multirow{2}{*}{Chord Type} \\
    & & & & \\
    \hline
    \multirow{3}{9em}{\small{\triads}} & \multirow{3}{4em}{\small $TI\langle 0,4,7 \rangle$} &  \multirow{3}{9em}{\small $\langle w,w+4,w+7 \rangle$ \\
    $\langle w,w-4,w-7 \rangle$} & \multirow{3}{2.5em}{\small $w$\\$w-7$} & \multirow{3}{10em}{\small Major Triad \\ Minor Triad} \\
     & & & & \\
     & & & & \\
    \hline
    \multirow{3}{9em}{\small{\dom}} & \multirow{3}{4em}{\small $TI\langle 0,4,7,10 \rangle$} & \multirow{3}{9em}{\small $\langle w,w+4,w+7,w+10 \rangle$ \\ $\langle w,w-4,w-7,w-10 \rangle$} & \multirow{3}{2.5em}{\small $w$\\$w-10$} & \multirow{3}{10em}{\small Dom. 7th \\ Half-Dim. 7th} \\
     & & & & \\
     & & & & \\
    \hline
   \multirow{3}{9em}{\small{\majorsevenths}} & \multirow{3}{4em}{\small $TI\langle 0,4,7,11 \rangle$} &  \multirow{3}{9em}{\small $\langle w,w+4,w+7,w+11 \rangle$ \\ $\langle w,w-4,w-7,w-11 \rangle$} & \multirow{3}{2.5em}{\small $w$\\$w-11$} & \multirow{3}{10em}{\small Major 7th \\ Major 7th } \\
     & & & & \\
     & & & & \\
    \hline
    \multirow{3}{9em}{\small{\minorsevenths}} & \multirow{3}{4em}{\small $TI\langle 0,4,7,9 \rangle$} & \multirow{3}{9em}{\small $\langle w,w+4,w+7,w+9 \rangle$ \\ $\langle w,w-4,w-7,w-9 \rangle$} & \multirow{3}{2.5em}{\small $w+9$\\$w-7$} & \multirow{3}{10em}{\small Minor 7th \\ Minor 7th} \\
     & & & & \\
     & & & & \\
    \hline
    \multirow{3}{9em}{\small{\diminishedsevenths}} & \multirow{3}{4em}{\small $TI\langle 1,4,7,10 \rangle$} & \multirow{3}{9.5em}{\footnotesize $\langle w+1,w+4,w+7,w+10 \rangle$ \\ $\langle w-1,w-4,w-7,w-10 \rangle$} & \multirow{3}{2.5em}{\small $w+1$\\$w-10$} & \multirow{3}{10em}{\small Dim. 7th \\ Dim. 7th} \\
     & & & & \\
     & & & & \\
    \hline
    \end{tabular}
    \caption{This table shows the set of consonant triads and sets of various seventh chords, as $TI$-orbits of selected pitch-class segments. Note that the $TI$-classes of all these seventh chord pc-segments have 24 elements (despite symmetries) because we are using pc-segments instead of pitch-class sets (the major seventh chord pc-set and the minor seventh chord pc-set each have a reflectional symmetry, and the diminished seventh chord pc-set even has two reflectional symmetries and 4 transpositional symmetries, but the corresponding pc-segments do not have any symmetries). Each major seventh chord has two copies in \majorsevenths, one in ascending order and one in descending order. Similarly, each minor seventh chord has two copies in \minorsevenths, although neither is in root position. Each of the three diminished seventh chords even has 8 copies in \diminishedsevenths, its four rotations and the descending version of its four rotations. \label{table:seventhchord_sets}}
    \end{table}

Dominant seventh chords were reviewed in Section~\ref{sec:notations}. The thirds in a dominant seventh chord from root upwards are major third, minor third, minor third, as we see in Figure~\ref{fig:seventh_images}. Half-diminished seventh chords were also reviewed in Section~\ref{sec:notations}. The entry-wise reflection of a dominant seventh chord is a half-diminished seventh chord, so the thirds in a half-diminished seventh chord from root upwards are the opposite: minor third, minor third, major third ({\it not} pictured in Figure~\ref{fig:seventh_images}). We write dominant seventh chords as pitch-class segments in root position and half-diminished seventh chords as pitch-class segments in reverse root position in order to match with entry-wise inversion, see Table~\ref{table:seventhchord_sets}.

In a {\it major seventh chord}, the thirds from root upwards are major third, minor third, major third as we see in Figure~\ref{fig:seventh_images}. Because of this symmetry, any reflection of a major seventh chord is also a major seventh chord, which implies every major seventh chord is represented twice in the $TI$-class of $\langle 0 , 4, 7, 11 \rangle$, once in root position and once again in reverse root position. For instance, $\langle 0 , 4, 7, 11 \rangle$ and $\langle 11 , 7, 4, 0 \rangle$ are both in the set \majorsevenths, which denotes the $TI$-class of $\langle 0 , 4, 7, 11 \rangle$. See Table~\ref{table:seventhchord_sets}. Nevertheless, this set has 24 elements since we use pitch-class segments instead of pitch-class sets. The notation for a major seventh chord is a superscript \textsuperscript{$\triangle$7}.

In a {\it minor seventh chord}, the thirds from root upwards are minor third, major third, minor third as we see in Figure~\ref{fig:seventh_images} (starting with pitch-class $A$ this time instead of pitch-class $C$). Because of this symmetry, any reflection of a minor seventh chord is also a minor seventh chord. We do {\it not} write the pitch-class segments for minor seventh chords in root position, nor in reverse root position! Notice in the $A$-minor seventh chord in Figure~\ref{fig:seventh_images} the $C$-major chord is (uniquely) embedded {\it above} the root $A$. For notational convenience, we write $\langle 0, 4, 7 \rangle$ to be first in the pitch-class segment $\langle 0, 4, 7, 9 \rangle$ in order to be consistent with dominant seventh chords and major seventh chords, see Table~\ref{table:seventhchord_sets} and Table~\ref{table:seventhchord_functions}. Although this extension on $C$ major
$$\langle 0,4,7 \rangle \mapsto \langle 0,4,7,9 \rangle, \quad \quad C \mapsto A^{-7}$$
looks awkward because it changes the root, it actually has as a consequence of $TI$-equivariance the desired behavior on minors: every minor triad maps to its minor seventh chord with the same root, and the root is in the same entry! For instance, for root $A=9$ in the third entry of the $a$ minor pc-seg we have
$$\langle 4,0,9 \rangle \mapsto \langle 4,0,9,7 \rangle, \quad \quad a \mapsto A^{-7}.$$
A different analyst might choose a different bijection between consonant triads and minor seventh pitch-class segments. Every minor seventh chord is represented twice in the $TI$-class of $\langle 0 , 4, 7, 9 \rangle$, denoted \minorsevenths. For instance, $\langle 0 , 4, 7, 9 \rangle$ and $\langle 4 , 0, 9, 7 \rangle$ are both in \minorsevenths. The notation for a minor seventh chord is a superscript \textsuperscript{-7}. The minor seventh chords are not reflectionally related to the major seventh chords, as we can see in the distributions of notes in the two chords in Figure~\ref{fig:seventh_images}.

Lastly, the {\it diminished seventh chords} consist of minor third, minor third, and minor third upwards from the root. Because of this symmetry, there are only three diminished seventh chord pitch-class sets in $\mathbb{Z}_{12}$ (visualize the rotations of the square set in Figure~\ref{fig:seventh_images}). However, we work with the $TI$-class of the pitch class-segment $\langle 1,4,7,10\rangle$, denoted \diminishedsevenths. This set contains 24 elements. Because the diminished seventh chord pitch-class set is evenly distributed in $\mathbb{Z}_{12}$, any of the 4 pitch classes can be considered a root, depending on the musical context. For consistency with all other collections (except minor sevenths), we arbitrarily declare in Table~\ref{table:seventhchord_sets} the root of a $T$-form to be the first entry, and the root of an $I$-form to be the last entry. The notation for a diminished seventh chord is a superscript \textsuperscript{o7}.

The redundancies in \majorsevenths, \minorsevenths, and \\ \diminishedsevenths \ will allow us to construct a simply transitive group action on the union of all five sets in Table~\ref{table:seventhchord_sets}.

Having finished this review of seventh chords, our selected formalization with pitch-class segments, notations, and consequences of our choices, we can now state and prove the results we need to apply Section~\ref{sec:construction_for_disjoint_union_multiple}. We claim the formulas in Table~\ref{table:seventhchord_functions} are the maps induced by the single-element assignments in \eqref{equ:single-element_dominant}, \eqref{equ:single-element_majorsevenths}, \eqref{equ:single-element_minorsevenths}, and \eqref{equ:single-element_diminishedsevenths}. To confirm the formulas coincide with the maps induced by the single-element assignments, it suffices to check that the formulas do the corresponding single-element assignment and to confirm that the formulas are $TI$-equivariant. The correct output of each formula on $\langle 0,4,7 \rangle$ is easily seen with the formulas. We prove the $TI$-equivariance in Proposition~\ref{prop:TI-equivariance_of_the_seventh_bijections}.


\begin{table}
    \renewcommand{\arraystretch}{2} 
    \centering
    \begin{tabular}{|p{18em}|p{18em}|} 
    \hline
    Function & Formula on $T$-Forms and $I$-Forms \vspace{.03in} \\ 
    \hline
    \multirow{2}{18em}{\small $f_{Dom7Tr}\colon\;TI\langle0,4,7\rangle \to TI\langle0,4,7,10 \rangle$ \\ \vspace{.05in}
    $\phantom{f_{Dom7Tr}\colon\;} \quad\langle w, x, y  \rangle \mapsto  \langle w,\;x,\;y,\;2y-x \rangle$ }
    & \multirow{2}{18em}{\small \\$\langle w,w+4,w+7 \rangle \mapsto \langle w,w+4,w+7,w+10 \rangle$ \\
      $\langle w,w-4,w-7 \rangle \mapsto \langle w,w-4,w-7,w-10 \rangle$} \\
     &  \\
    \hline
    \multirow{2}{18em}{\small $f_{Maj7Tr}\colon\; TI\langle0,4,7\rangle \to TI\langle0,4,7,11 \rangle$ \\ \vspace{.05in}
    $\phantom{f_{Maj7Tr}\colon\;} \quad\langle w, x, y  \rangle \mapsto  \langle w,\;x,\;y,\;x+y-w \rangle$}
    & \multirow{2}{18em}{\small \\$\langle w,w+4,w+7 \rangle
      \mapsto \langle w,w+4,w+7,w+11 \rangle $ \\
      $\langle w,w-4,w-7 \rangle
      \mapsto \langle w,w-4,w-7,w-11 \rangle$} \\
     &  \\
    \hline
    \multirow{2}{18em}{\small $f_{Min7Tr}\colon \;TI\langle0,4,7\rangle \to TI\langle0,4,7,9 \rangle$ \\ \vspace{.05in}
    $\phantom{f_{Min7Tr}\colon\;} \quad\langle w, x, y  \rangle \mapsto  \langle w,\;x,\;y,\;w+x-y \rangle$}
    & \multirow{2}{18em}{\small \\$\langle w,w+4,w+7 \rangle
      \mapsto \langle w,w+4,w+7,w+9 \rangle $ \\
      $\langle w,w-4,w-7 \rangle
      \mapsto \langle w,w-4,w-7,w-9 \rangle$}
     \cr &   \\
    \hline
    \multirow{2}{20em}{\small $f_{Dim7Tr}\colon \;TI\langle0,4,7\rangle \to TI\langle 1,4,7,10 \rangle$ \\ \vspace{.05in}
    \hspace{-.1in}$\phantom{f_{Dim7Tr}\colon\;} \quad\langle w, x, y  \rangle \mapsto  \langle 2x-y,\;x,\;y,\;2y-x \rangle$}
    & \multirow{2}{18em}{\small \\$\langle w,w+4,w+7 \rangle
      \mapsto \langle w+1,w+4,w+7,w+10 \rangle $ \\
      $\langle w,w-4,w-7 \rangle
      \mapsto \langle w-1,w-4,w-7,w-10 \rangle$} \\
     & \\
    \hline
    \end{tabular}
    \caption{We will use these $TI$-equivariant bijections in an application of Theorem~\ref{thm:generalized}. These $TI$-equivariant bijections go from consonant triads to various seventh chords, all formalized as the $TI$-classes of selected pitch-class segments in Table~\ref{table:seventhchord_sets}. The subscripts have the domain indicated on the right because right is closer to the argument, in other words for a function with domain set $U$ and codomain set $V$ we write $f_{VU}\colon\; U \rightarrow V$ so that we can write $f_{VU}(u)$ for $u \in U$. Notice that the third bijection, the one from consonant triads to minor seventh chords, is quite intuitive on minor triads: it sends a minor triad to the minor seventh chord with the same root name in the same entry, for instance $a$ minor maps to $A^{-7}$, that is $\langle 4,0,9 \rangle \mapsto \langle 4,0,9,7\rangle$ with root $9$.}
    \label{table:seventhchord_functions}
\end{table}

\begin{proposition} \label{prop:TI-equivariance_of_the_seventh_bijections}
The four functions in Table~\ref{table:seventhchord_functions} are $TI$-equivariant bijections. The top three bijections conjugate contextual inversions to contextual inversions, the last does not. All four bijections conjugate $Q_i$ to $Q_i$.
\end{proposition}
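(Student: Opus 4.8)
The plan is to dispatch the three assertions of the proposition in order, each time reducing to results already established: the affine-coefficient criterion underlying Proposition~\ref{prop:TI-equivariance_contextual_extension}, Theorem~\ref{thm:conjugation_with_extension}, and Theorem~\ref{thm:modifications}. For the $TI$-equivariance I would first note that in each of the four formulas of Table~\ref{table:seventhchord_functions} every coordinate of the output is an affine combination $aw+bx+cy$ of the coordinates of the input triad $\langle w,x,y\rangle$ with $a+b+c\equiv 1\pmod{12}$: the retained coordinates $w,x,y$ are the trivial such combinations, the appended sevenths $2y-x$, $x+y-w$, and $w+x-y$ all have coefficient sum $1$, and the altered first coordinate $2x-y$ of $f_{Dim7Tr}$ does as well. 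Then, exactly as in the proof of Proposition~\ref{prop:TI-equivariance_contextual_extension}, such a combination satisfies $a(w+n)+b(x+n)+c(y+n)=(aw+bx+cy)+n$, hence commutes with every $T_n$, and being $\mathbb{Z}$-linear it commutes with $I_0$ and therefore with every $I_n=T_nI_0$; applying this coordinatewise yields $TI$-equivariance of all four tuple maps. Bijectivity is then immediate from the right-hand column of Table~\ref{table:seventhchord_functions}, which displays the images of the $24$ triads $\langle w,w\pm 4,w\pm 7\rangle$ as exactly the $24$ segments of the corresponding codomain orbit of Table~\ref{table:seventhchord_sets} (that orbit has $24$ elements because the pc-seg has no nontrivial $TI$-stabilizer), with the correspondence visibly injective; equivalently, any $TI$-equivariant map between two $TI$-orbits on each of which $TI$ acts simply transitively, carrying chosen representative to chosen representative, is automatically a bijection.

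For the conjugation of $Q_i$, I would use that each of the four bijections sends $\langle 0,4,7\rangle$ to the distinguished representative of its codomain orbit, so by equivariance it carries $T_n\langle 0,4,7\rangle$ to $T_n$ of that representative and $I_n\langle 0,4,7\rangle$ to $I_n$ of it, and in particular maps $T$-forms to $T$-forms and $I$-forms to $I$-forms. Since $Q_i$ is defined on both orbits as $T_i$ on $T$-forms and $T_{-i}$ on $I$-forms, conjugating $Q_i$ by such a bijection returns $Q_i$; for the first three bijections this is also equation~\eqref{equ:Qi_conjugation}, and for $f_{Dim7Tr}$ it is equation~\eqref{equ:Qi_conjugation_modification} combined with~\eqref{equ:Qi_conjugation}, since $f_{Dim7Tr}$ is a modification followed by an extension.

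For the behavior on contextual inversions, I would observe that $f_{Dom7Tr}$, $f_{Maj7Tr}$, and $f_{Min7Tr}$ keep the first three coordinates of the triad and merely append a fourth, so each is a pitch-class segment extension; Theorem~\ref{thm:conjugation_with_extension} then gives $fK_{i,j}f^{-1}=K_{i,j}$ for $1\le i,j\le 3$, and since $K_{1,2}=R$, $K_{1,3}=P$, $K_{2,3}=L$ are precisely the contextual inversions on \triads, each conjugates to a contextual inversion. For $f_{Dim7Tr}$ the first coordinate is altered, $\langle 0,4,7\rangle\mapsto\langle 1,4,7,10\rangle$, so it is not a pure extension; Theorem~\ref{thm:modifications} guarantees only that $K_{i,j}$ with $1\notin\{i,j\}$, namely only $L=K_{2,3}$, conjugates to a contextual inversion, and I would take $P=K_{1,3}$ as a counterexample: by the computation already carried out in Remark~\ref{rem:diminished_sevenths_mystery_solution}, $f_{Dim7Tr}\,P\,f_{Dim7Tr}^{-1}\langle w{+}1,w{+}4,w{+}7,w{+}10\rangle=\langle w{+}6,w{+}3,w,w{-}3\rangle$ shares no pitch class with its argument, whereas any contextual inversion $K_{i,j}=I_{y_i+y_j}$ interchanges and hence retains the pitch classes $y_i$ and $y_j$. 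Thus $f_{Dim7Tr}$ does not conjugate contextual inversions to contextual inversions. The only genuine content is this last negative statement, which Remark~\ref{rem:diminished_sevenths_mystery_solution} has already established; everything else is bookkeeping with affine coefficients and with which coordinate is modified, so I anticipate no serious obstacle.
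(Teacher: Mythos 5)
Your proposal is correct and takes essentially the same route as the paper: the coefficient-sum-one linearity argument for $TI$-equivariance (the paper phrases it as ``two positives and one negative, so $2n-n=n$''), Theorem~\ref{thm:conjugation_with_extension} for the top three bijections, Theorem~\ref{thm:modifications} together with the computation of Remark~\ref{rem:diminished_sevenths_mystery_solution} (via Example~\ref{examp:alteration_diminished_sevenths}) for $f_{Dim7Tr}$, and preservation of $T$-forms and $I$-forms for the $Q_i$ claim. Your added remarks on bijectivity and the explicit ``no common pitch class'' criterion for ruling out a contextual inversion are consistent with, and only slightly more detailed than, what the paper records.
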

\begin{proof}
The discovery of the first function in Table~\ref{table:seventhchord_functions} and the proof of its $TI$-equivariance were already completed in the proof of Proposition~\ref{prop:TI-equivariance_contextual_extension}. The formula discovery and equivariance proofs for the remaining functions are very similar, so we abbreviate.

For the discovery, we attempt to write the new interval in terms of the existing intervals $4$ and $7$. We do the discovery only on the major triads, since the minor triads are very similar just with negative signs inserted. For major seventh chords: $11 \equiv 4+7$, so
\begin{align*}
    x+y-w &= (w+4) + (w+7) - w \\
    &= w + 11.
\end{align*}
For minor seventh chords: $9 \equiv 4- 7$, so
\begin{align*}
    w+x-y &= w + (w+4) - (w+7) \\
    &= w + 9.
\end{align*}
For the first entry of diminished seventh chords: $1 \equiv 2\times 4 - 7$, so
\begin{align*}
    2x-y &= 2(w+4) - (w+7) \\
    &= w + 1.
\end{align*}
The last entry $2y-x$ of diminished seventh chords was already treated in the proof of Proposition~\ref{prop:TI-equivariance_contextual_extension}.

Since identity entries are clearly $TI$-equivariant, to confirm the $TI$-equivariance of each bijection it suffices to confirm the non-identity entries are $TI$-equivariant, as we did in Proposition~\ref{prop:TI-equivariance_contextual_extension} for the first bijection. A glance at all the non-identity entries of the functions in Table~\ref{table:seventhchord_functions} allows us to see that they are all linear and have two positives and one negative, so confirmation of equivariance with respect to $T_n$ reduces to $2n-n=n$. The linearity implies compatibility with $I_0$, so we also have equivariance with respect to all $I_n=T_n \circ I_0$.

By Example~\ref{examp:generalizedcontextualgroup} and Theorem~\ref{thm:conjugation_with_extension}, the first three functions in Table~\ref{table:seventhchord_functions} conjugate contextual inversions to contextual inversions, and conjugate $Q_i$ to $Q_i$. By Theorem~\ref{thm:modifications} and Example~\ref{examp:alteration_diminished_sevenths}, the last function in Table~\ref{table:seventhchord_functions} conjugates $Q_i$ to $Q_i$.
\end{proof}

We now have all the ingredients to apply Theorem~\ref{thm:generalized} to our main example of consonant triads together with several kinds of seventh chords.

\section{Construction of a simply transitive group action on a disjoint union of multiple sets}\label{sec:construction_for_disjoint_union_multiple}

While Theorem~\ref{thm:construction_of_action_on_disjoint_union} is useful for building
simply transitive group actions on two $TI$-classes of pitch-class segments, some musical passages include representatives from more than two $TI$-classes. For instance, {\it Canonic Passacaglia} by Clare Fischer in Figure~\ref{fig:canonic_passacaglia} contains three classes: major/minor triads, minor seventh chords, and half-diminished/dominant seventh chords. The progression repeats a pattern of minor triad, minor seventh chord, half-diminished seventh chord, then dominant seventh chord. Thus we generalize Theorem~\ref{thm:construction_of_action_on_disjoint_union} to Theorem~\ref{thm:generalized} to construct a simply transitive group action on multiple sets. The point of departure for the theorem is a {\it star-shaped} collection of faithful $G$-sets and $G$-equivariant bijections, meaning we have one $G$-set in the middle and then $G$-equivariant bijections emanating outwards. {\it Additionally}, we label the $G$-set in the middle as $S_1$ and we also have $S_1$ on the outside, connected with the central $G$-set by the identity map, see Figure~\ref{fig:canonic_passacaglia} for an illustration of a star shape with three sets. We propose a ``meta rotation" $\overline{f}$ to traverse around the sets, where $\overline{f}$ rotates through chord type sets, from one set to the next, as pictured in Figure~\ref{fig:canonic_passacaglia}.

\begin{theorem}[Construction of an Extension from a Star-Shaped Diagram of Faithful $G$-Sets and $G$-Equivariant Bijections, and Extension of Duality in Simply Transitive Cases]
\label{thm:generalized}
Suppose a group $G$ acts faithfully on $n$ disjoint sets $S_1$, $S_2$, \dots, $S_n$ and suppose for each $2 \leq j \leq n$ we have a $G$-equivariant bijection $f_j \colon \; S_1 \to S_j$. Define $f_1\colon \; S_1 \to S_1$ to be the identity function on $S_1$. See Figure~\ref{fig:canonic_passacaglia} for an example with $n=3$. Let $$\overline{S}:=S_1 \bigsqcup  \cdots \bigsqcup S_n$$ and define the ``meta-rotation'' $\overline{f}\colon \; \overline{S} \to \overline{S}$ by
$$\xymatrix@C=3pc{ \overline{f} := \bigsqcup\limits_{j=1}^{n} \left( f_{j+1} \circ f_j^{-1} \right) \colon \; \bigsqcup\limits_{j=1}^{n} S_j \ar[r] & \bigsqcup\limits_{j=1}^{n} S_{j+1}}$$
where the indices are read modulo $n$, so that
$$\xymatrix@C=3pc{\overline{f} = \left( f_2 \circ f_1^{-1} \right) \bigsqcup \cdots \bigsqcup  \left(f_1 \circ f_n^{-1}\right) \colon \; S_1 \bigsqcup  \cdots \bigsqcup S_{n-1} \bigsqcup S_n \ar[r] & S_2 \bigsqcup  \cdots \bigsqcup S_n \bigsqcup S_1}.$$
Let every $g$ in $G$ act on the disjoint union $\overline{S}$ as the union of its actions on $S_1$, \dots, $S_n$, so that $G$ also acts on $\overline{S}$ faithfully and we may consider $G$ as a subgroup of $\text{\rm Sym}(\overline{S})$ via the associated embedding $G \hookrightarrow \text{\rm Sym}(\overline{S})$. Let $\overline{G}:=\langle G, \overline{f} \rangle$, generated inside of $\text{\rm Sym}(\overline{S})$. Then the following statements hold.
\begin{enumerate}
    \item \label{thm:generalized:i:equivariance}
    $\overline{f}$ is $G$-equivariant and has order $n$.
    \item \label{thm:generalized:ii:internal_direct_product}
    $G$ and $\langle\overline{f}\rangle$ commute in $\text{\rm Sym}(\overline{S})$, and $\overline{G}$ is an internal direct product of $G$ and $\langle\overline{f}\rangle$. Moreover, $\overline{G}$ is the disjoint union
    \begin{align*}
    \overline{G} &= G \; \bigcup \; \overline{f}G \; \bigcup \; \cdots \; \bigcup \;\overline{f}^{n-1} G \\
    &= G \; \bigcup \; G\overline{f} \; \bigcup \; \cdots \; \bigcup \; G\overline{f}^{n-1}
    \end{align*}
     where $G$ is exactly the set of those transformations in $\overline{G}$ that preserve each $S_j$ for $1 \leq j \leq n$ and the coset $\overline{f}^iG=G\overline{f}^i$ is exactly the set of those transformations in $\overline{G}$ that send $S_j$ to $S_{j+i}$ for all $1 \leq j \leq n$ (indices are read modulo $n$).
    \item \label{thm:generalized:iii:short_exact_sequence}
    The groups $G$, $\overline{G}$, and $\langle \overline{f} \rangle$ fit into the following short exact sequence, where the third homomorphism $\overline{G} \to \langle \overline{f} \rangle$ is $g \overline{f}^i \mapsto \overline{f}^i$.
    $$\{Id_{\overline{S}}\}\longrightarrow G\longrightarrow \overline{G}\longrightarrow \langle \overline{f}\rangle\longrightarrow \{Id_{\overline{S}}\}$$
    This short exact sequence is left split, so is isomorphic to the direct product short exact sequence of $G$ and $\langle \overline{f} \rangle$, which we already know from part \ref{thm:generalized:ii:internal_direct_product}.
    \item  \label{thm:generalized:iv:simple_transitivity}
    If the action of $G$ on any one $S_{j_0}$ is simply transitive, then the action of $G$ on every $S_j$ is simply transitive, and moreover the action of $\overline{G}$ on the disjoint union $\overline{S}$ is also simply transitive.
    \item
    \label{thm:generalized:v:H}
    Suppose the action of $G$ on $S_1$ is simply transitive. Let $H$ be the centralizer of $G$ in $\text{\rm Sym}(S_1)$. Then $H$ is the Lewin dual group of $G$. Let $H$ act on $S_j$ via its embedded copy $f_jHf^{-1}_j \leqslant \text{\rm Sym}(S_j)$, so that $f_j$ is $H$-equivariant. Then parts \ref{thm:generalized:i:equivariance}, \ref{thm:generalized:ii:internal_direct_product}, \ref{thm:generalized:iii:short_exact_sequence}, and \ref{thm:generalized:iv:simple_transitivity} apply to $H$ and the $f_j$'s, and moreover $\overline{H}:=\langle  H,\overline{f}\rangle$ is the centralizer of $\overline{G}$ in $\text{\rm Sym}(\overline{S})$, so $\overline{H}$ and $\overline{G}$ are dual groups in $\text{\rm Sym}(\overline{S})$.
    \begin{enumerate}
    \item[(i)]
    $\overline{f}$ is $H$-equivariant.
    \item[(ii)]
    $H$ and $\langle \overline{f} \rangle$ commute in $\text{\rm Sym}(\overline{S})$, and $\overline{H}$  is an internal direct product of $H$ and $\langle \overline{f} \rangle$. Moreover, $\overline{H}$ is the disjoint union
    \begin{align*}
    \overline{H} &= H \; \bigcup \; \overline{f}H \; \bigcup \; \cdots \; \bigcup \;\overline{f}^{n-1} H \\
    &= H \; \bigcup \; H\overline{f} \; \bigcup \; \cdots \; \bigcup \; H\overline{f}^{n-1}
    \end{align*}
    where $H$ is exactly the set of those transformations in $\overline{H}$ that preserve each $S_j$ for $1 \leq j \leq n$ and the coset $\overline{f}^iH=H\overline{f}^i$ is exactly the set of those transformations in $\overline{H}$ that send $S_j$ to $S_{j+i}$ for all $1 \leq j \leq n$ (indices are read modulo $n$).
    \item[(iii)]
    The groups $H$, $\overline{H}$, and $\langle \overline{f} \rangle$ fit into a short exact sequence similar to part \ref{thm:generalized:iii:short_exact_sequence}.
    \item[(iv)]
    The action of $H$ on every $S_j$ is simply transitive, and the action of $\overline{H}$ on the disjoint union $\overline{S}$ is also simply transitive.
    \end{enumerate}
    \item \label{thm:generalized:vi:hbar}
    Suppose again the action of $G$ on $S_1$ is simply transitive. Let $H$ and its action on each $S_j$ be as in part \ref{thm:generalized:v:H}. Let $h \in H$ and let
    $$\overline{h}=\bigsqcup_{j=1}^n f_jhf^{-1}_j$$ be the extension of $h$ to $\overline{S}$. In particular, on $S_j$ this is $f_jhf_j^{-1}$. Then the function $\overline{f} \circ \overline{h} = \overline{h} \circ \overline{f}$ is $f_{j+1}hf^{-1}_j$ on $S_j$.
\end{enumerate}
\end{theorem}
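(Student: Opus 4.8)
The plan is to run the $n=2$ arguments of Theorem~\ref{thm:construction_of_action_on_disjoint_union} again, with one new computational ingredient: the identity $\overline{f}^{\,k}|_{S_j} = f_{j+k}\circ f_j^{-1}$ for every $k$ and every $j$ (indices mod $n$). I would prove this first by induction on $k$; the base case $k=1$ is the definition of $\overline f$, and the inductive step is $\overline f^{\,k+1}|_{S_j} = (\overline f|_{S_{j+k}})\circ(\overline f^{\,k}|_{S_j}) = (f_{j+k+1}\circ f_{j+k}^{-1})\circ(f_{j+k}\circ f_j^{-1}) = f_{j+k+1}\circ f_j^{-1}$, the middle pair cancelling. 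Setting $k=n$ and using $f_{j+n}=f_j$ gives $\overline f^{\,n}=\mathrm{Id}_{\overline S}$, while for $0<k<n$ the map $\overline f^{\,k}$ sends $S_j$ into $S_{j+k}\neq S_j$ and so is not the identity; hence $\overline f$ has order exactly $n$. For the rest of part~\ref{thm:generalized:i:equivariance}, $G$-equivariance of $\overline f$ holds because each $f_{j+1}\circ f_j^{-1}$ is a composite of $G$-equivariant bijections and a disjoint union of $G$-equivariant maps is $G$-equivariant; this is the only modification needed from Theorem~\ref{thm:construction_of_action_on_disjoint_union}~\ref{thm:construction_of_action_on_disjoint_union:i:equivariance}.

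For parts~\ref{thm:generalized:ii:internal_direct_product} and~\ref{thm:generalized:iii:short_exact_sequence}: since $\overline f$ is $G$-equivariant, $G$ and $\langle\overline f\rangle$ commute in $\mathrm{Sym}(\overline S)$, so each normalizes the other, $\overline G=G\langle\overline f\rangle$, and $G\cap\langle\overline f\rangle=\{\mathrm{Id}_{\overline S}\}$ because every element of $G$ preserves each $S_j$ whereas $\overline f^{\,i}$ with $0<i<n$ moves $S_j$ off itself; hence $\overline G$ is the internal direct product of $G$ and $\langle\overline f\rangle\cong\mathbb{Z}_n$. Uniqueness of the form $g\overline f^{\,i}=\overline f^{\,i}g$ yields the coset decomposition $\overline G=\bigcup_{i=0}^{n-1}G\overline f^{\,i}$, and the bookkeeping identity above identifies $G\overline f^{\,i}$ as exactly those transformations sending each $S_j$ to $S_{j+i}$ (and $G$ itself as those preserving every $S_j$). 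The short exact sequence, together with its left splitting $g\overline f^{\,i}\mapsto g$ and the resulting isomorphism to the direct-product sequence, then carries over verbatim from Theorem~\ref{thm:construction_of_action_on_disjoint_union}~\ref{thm:construction_of_action_on_disjoint_union:iii:short_exact_sequence}.

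For part~\ref{thm:generalized:iv:simple_transitivity}: if $G$ acts simply transitively on some $S_{j_0}$, transporting along the $G$-equivariant bijections $f_{j_0}$ and $f_j\circ f_{j_0}^{-1}$ shows $G$ acts simply transitively on $S_1$ and on every $S_j$, just as in the two-set case. Transitivity of $\overline G$ on $\overline S$ follows by evaluating $\bigcup_{i=0}^{n-1}G\overline f^{\,i}$ at one point $s_1\in S_1$, giving $G\overline f^{\,i}s_1=G\cdot(\text{a point of }S_{1+i})=S_{1+i}$, whose union over $i$ is $\overline S$. For freeness, in the finite case $|\overline G|=n|G|=\sum_j|S_j|=|\overline S|$ by part~\ref{thm:generalized:ii:internal_direct_product}, and Orbit--Stabilizer forces trivial stabilizers; in the infinite case the argument of Theorem~\ref{thm:construction_of_action_on_disjoint_union}~\ref{thm:construction_of_action_on_disjoint_union:iv:simple_transitivity} adapts once one notes that a putative pair of elements of $\overline G$ carrying $s\in S_j$ to $s'\in S_k$ must both lie in the coset $G\overline f^{\,k-j}$.

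For parts~\ref{thm:generalized:v:H} and~\ref{thm:generalized:vi:hbar}: with $H$ the centralizer of $G$ in $\mathrm{Sym}(S_1)$, which is the Lewin dual of $G$ by Proposition~3.2 of \cite{BerryFiore}, letting $H$ act on $S_j$ via $f_jHf_j^{-1}$ makes each $f_j$ $H$-equivariant by the same one-line computation as \eqref{equ:f_is_H_equivariant}, and hence makes $\overline f$ $H$-equivariant because $\overline f|_{S_j}=f_{j+1}\circ f_j^{-1}$ intertwines the action $f_jHf_j^{-1}$ on $S_j$ with $f_{j+1}Hf_{j+1}^{-1}$ on $S_{j+1}$. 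Thus parts~\ref{thm:generalized:i:equivariance}--\ref{thm:generalized:iv:simple_transitivity} apply to $H$ and the $f_j$'s, and $\overline H=\langle H,\overline f\rangle$ acts simply transitively on $\overline S$. The commuting of $G$ and $H$ on $S_1$ transports along each $G$- and $H$-equivariant $f_j$ to commuting on every $S_j$, hence on $\overline S$; since $G$ and $H$ each also commute with $\overline f$, the groups $\overline G$ and $\overline H$ commute elementwise, and Proposition~3.1 of \cite{BerryFiore} upgrades this to $\overline H$ being the full centralizer of $\overline G$, so the two are Lewin dual. Finally part~\ref{thm:generalized:vi:hbar} is a direct restriction-to-$S_j$ computation: $\overline f\circ\overline h|_{S_j}=(f_{j+1}\circ f_j^{-1})\circ(f_jhf_j^{-1})=f_{j+1}hf_j^{-1}$ and $\overline h\circ\overline f|_{S_j}=(f_{j+1}hf_{j+1}^{-1})\circ(f_{j+1}\circ f_j^{-1})=f_{j+1}hf_j^{-1}$, which simultaneously reproves $\overline f\,\overline h=\overline h\,\overline f$. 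The only genuinely new work beyond Theorem~\ref{thm:construction_of_action_on_disjoint_union} is the cyclic index bookkeeping --- keeping the composites $f_{j+1}\circ f_j^{-1}$, their iterates, and the cosets $\overline f^{\,i}G$ straight modulo $n$ --- so that is the step I would write out most carefully; the remaining arguments are faithful transcriptions of the $n=2$ proofs.
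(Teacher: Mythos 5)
Your proposal is correct and follows essentially the same route as the paper's proof: equivariance plus the telescoping/cyclic bookkeeping of $\overline{f}^{\,k}$ give the order-$n$ statement and the internal direct product, the coset decomposition plus Orbit--Stabilizer (or the coset-membership argument in the infinite case) give simple transitivity, and duality is obtained exactly as in the paper via $H$-equivariance of the $f_j$'s and Proposition~3.1 of \cite{BerryFiore}. The only difference is presentational --- you isolate the identity $\overline{f}^{\,k}|_{S_j}=f_{j+k}\circ f_j^{-1}$ as an explicit induction, which the paper handles by a direct telescoping computation and an informal ``jumps by $i$ sets'' remark --- so no further comment is needed.
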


\begin{proof}
\begin{enumerate}
\item\textbf{$\overline{f}$ is $G$-equivariant and has order $n$.}
Since each bijection $f_j$ is $G$-equivariant, each inverse $f^{-1}_j$ is also equivariant, and so each composite $f_{j+1} \circ f_j^{-1}$ is $G$-equivariant. The coproduct of $G$-equivariant maps is $G$-equivariant, so $\overline{f}$ is $G$-equivariant. The $G$-equivariance of $\overline{f}$ can be more concretely written on each $S_j$, as we did for $S_1$ and $S_2$ in the proof of Theorem~\ref{thm:construction_of_action_on_disjoint_union}~\ref{thm:construction_of_action_on_disjoint_union:i:equivariance}.

Next we see $\overline{f}$ has order $n$. Let $s_1 \in S_1$. The case $s_j \in S_j$ is similar but will start with $\left(f_{j+1} \circ f_j^{-1}\right)$ in place of $\left(f_2 \circ f_1^{-1}\right)$.
\begin{align*}
\overline{f}^n(s_1) &= \left(f_1 \circ f_n^{-1} \right) \circ \left( f_n \circ f_{n-1}^{-1} \right) \circ \cdots \circ
\left(f_2 \circ f_1^{-1}\right)(s_1)\\
&=  \left(f_1 \circ f^{-1}_1\right)(s_1) \\
&= s_1
\end{align*}

\item\textbf{$\overline{G}$ is an Internal Direct Product.} The groups $G$ and $\langle \overline{f} \rangle$ commute by \ref{thm:generalized:i:equivariance}. Let $\overline{G} = \langle G, \overline{f} \rangle$. Since $G$ and $\langle \overline{f} \rangle$ commute, we have $\overline{G} = G \cdot \langle \overline{f} \rangle$. Since $G$ and $\langle \overline{f} \rangle$ commute, we also have $G$ and $\langle \overline{f} \rangle$ are normal in $\overline{G}$. Lastly, $G \cap \langle \overline{f} \rangle = Id_{\overline{S}}$ because every element of $G$ preserves each $S_j$ while $\overline{f}^i$ sends $S_j$ to $S_{j+i}$ where indices are read modulo $n$ (by construction $\overline{f}$ sends every $S_j$ to $S_{j+1}$, so the $i$-th power of $\overline{f}$ will jump by $i$ sets). We have confirmed the three conditions for $\overline{G}$ to be the internal direct product of the commuting groups $G$ and $\langle \overline{f} \rangle$. Since $\overline{G}$ is an internal direct product of commuting groups, every element of $\overline{G}$ can be written uniquely as $g \overline{f}^i= \overline{f}^i g$ for some $g \in G$ and some $0 \leq i \leq n-1$, so
\begin{align*}
    \overline{G} &= G \; \bigcup \; \overline{f}G \; \bigcup \; \cdots \; \bigcup \;\overline{f}^{n-1} G \\
    &= G \; \bigcup \; G\overline{f} \; \bigcup \; \cdots \; \bigcup \; G\overline{f}^{n-1}
    \end{align*}
Since $G$ sends each $S_j$ to $S_j$, and $\overline{f}^i$ sends each $S_j$ to $S_{j+i}$, we know each element of $\overline{f}^iG=G\overline{f}^i$ sends $S_j$ to $S_{j+i}$ (indices are read modulo $n$).

\item\textbf{Short Exact Sequence.} This proof is completely analogous to the proof of Theorem~\ref{thm:construction_of_action_on_disjoint_union}~\ref{thm:construction_of_action_on_disjoint_union:iii:short_exact_sequence}, so we skip it.

\item\textbf{Simple Transitivity of $\overline{G}$.} Suppose $G$ acts simply transitively on $S_{j_0}$. We claim the action of $G$ on $S_j$ is also simply transitive. Let $s_j, s_j' \in S_j$. Then there exist $s_{j_0}, s_{j_0}' \in S_{j_0}$ and (unique) $g \in G$ such that
$$\left( f_j \circ f_{j_0}^{-1} \right)(s_{j_0}) = s_j, \quad \quad \left( f_j \circ f_{j_0}^{-1} \right)(s_{j_0}')= s_j', \quad \quad \text{and} \quad \quad gs_{j_0} = s_{j_0}'.$$
An application of $\left( f_j \circ f_{j_0}^{-1} \right)$ to the last equality yields $gs_j = s_j'$. If there is a second element of $G$ that moves $s_j$ to $s_j'$, then we can apply $\left( f_j \circ f_{j_0}^{-1} \right)^{-1}$ to the equation, and use the simple transitivity on $S_{j_0}$ to conclude we have the same $g$. Of course we are making use of the $G$-equivariance of $f_{j_0}$ and $f_j$ throughout. Thus, the action of $G$ on $S_j$ is also simply transitive.

We next prove transitivity of $\overline{G}$ on $\overline{S}$ by evaluating all group elements on any one $s_1 \in S_1$, using the simple transitivity of $G$ on every $S_j$.
\begin{align*}
\left(G \; \bigsqcup \; G\overline{f} \; \bigsqcup \; \cdots \; \bigsqcup \; G\overline{f}^{n-1}\right)s_1 &=
Gs_1 \; \bigsqcup \; (G\overline{f})s_1 \; \bigsqcup \; \cdots \; \bigsqcup \; (G\overline{f}^{n-1})s_1 \\
&= Gs_1 \; \bigsqcup \; G(\overline{f}s_1) \; \bigsqcup \; \cdots \; \bigsqcup \; G(\overline{f}^{n-1}s_1)  \\
&= S_1 \bigsqcup S_2 \bigsqcup \cdots \bigsqcup S_n\\
&= \overline{S}
\end{align*}
The orbit of any one $s_1 \in S_1$ is all of $\overline{S}$, so the $\overline{G}$-action on $\overline{S}$ is transitive, without any finiteness assumption.

The uniqueness part of simple transitivity on $\overline{S}$ follows quickly from the transitivity in the finite case from multiple applications of the Orbit-Stabilizer Theorem. Namely, if any one of $G$, $S_1$, $S_2$, \dots, or $S_n$ is finite, then so are all the others, and
$$|\overline{G}| = n \cdot |G| = n \cdot |S_1| = |\overline{S}|$$
for use in the Orbit-Stabilizer Theorem as in the proof of Theorem~\ref{thm:construction_of_action_on_disjoint_union}~\ref{thm:construction_of_action_on_disjoint_union:iv:simple_transitivity}. We skip the Orbit-Stabilizer equations because they are analogous.

If $G$, $S_1$, $S_2$, \dots, $S_n$ are not finite, the uniqueness part of simple transitivity on $\overline{S}$ can still be verified by an argument very similar to the non-finite proof in Theorem~\ref{thm:construction_of_action_on_disjoint_union}~\ref{thm:construction_of_action_on_disjoint_union:iv:simple_transitivity}. We skip the details for brevity.

\item\textbf{Duality of $\overline{H}$ and $\overline{G}$.}
Suppose $G$ acts simply transitively on $S_1$.
Let $H$ be the centralizer of the simply transitive group $G$ in $\text{\rm Sym}(S_1)$. Then $H$ also acts simply transitively, so $H$ is the dual group of $G$, by Proposition~3.2 of \cite{BerryFiore} (the claim that a centralizer of a simply acting group is its Lewin dual was stated on page 253 of \cite{LewinGMIT} but not proved there).

Let $H$ act on each $S_j$ via its embedded copy $f_jHf_j^{-1} \leqslant \text{\rm Sym}(S_j)$. Then each $f_j$ is $H$-equivariant by the equations \eqref{equ:f_is_H_equivariant}.

The group $H$ and bijections $f_j$ now satisfy the hypotheses on $G$ and bijections $f_j$ in the present theorem, so we can apply parts \ref{thm:generalized:i:equivariance}, \ref{thm:generalized:ii:internal_direct_product}, \ref{thm:generalized:iii:short_exact_sequence}, and \ref{thm:generalized:iv:simple_transitivity} to $H$ and the $f_j$'s. In particular, the group $\overline{H}:=\langle  H,\overline{f}\rangle$ in $\text{\rm Sym}(\overline{S})$ acts simply transitively on $\overline{S}$.

The groups $\overline{G}$ and $\overline{H}$ commute because
$$(g\overline{f}^i)(h\overline{f}^j) = (h\overline{f}^j)(g\overline{f}^i),$$
as all four of these elements commute and can be rearranged at will. By Proposition~3.1 of \cite{BerryFiore}, we conclude from the commutativity of $\overline{G}$ and $\overline{H}$ that $\overline{H}$ is actually the centralizer of $\overline{G}$, and $\overline{H}$ and $\overline{G}$ are dual groups in $\text{\rm Sym}(\overline{S})$.

\item\textbf{Formula for the Function $\overline{f} \,\overline{h}$.}
Suppose the action of $G$ on $S_1$ is simply transitive. Let $H$ and its action on each $S_j$ be as in part \ref{thm:generalized:v:H}. On $S_j$ we have
$$\overline{f} \, \overline{h} = \left( f_{j+1} f_j^{-1} \right) \left( f_j h f_j^{-1}\right)
= f_{j+1} h f_j^{-1}$$
and also
$$\overline{h} \, \overline{f} = \left( f_{j+} h f_{j+1}^{-1}\right)\left( f_{j+1} f_j^{-1} \right).$$
\end{enumerate}
\end{proof}

\begin{remark}
In Theorem~\ref{thm:generalized}~\ref{thm:generalized:v:H}, we actually have $n$ copies of the dual pair $G$ and $H$ that fit together via $\overline{f}$ into one big dual pair $\overline{G}$ and $\overline{H}$. In particular inside of each $\text{Sym}(S_j)$, we have the natural copy of $G$ and $f_j H f_{j}^{-1}$ in a dual relationship, and all of these dual pairs fit together with $\overline{f}$ to make a dual pair $\overline{G}$ and $\overline{H}$ on $\overline{S}=\bigsqcup_{j=1}^n S_j$.
\end{remark}

\begin{example}[Rotation is Meta-Rotation] \label{examp:tetractys}
Rotation of a pitch-class segment produces a meta-rotation. We illustrate this concretely with the modes of the tetractys, but the idea can be formulated on general pitch-class segments to construct rotation sub-dual groups of the dual permutation groups $\lambda(\Sigma_nTI\text{-group})$ and $\rho(\Sigma_nTI\text{-group})$ in Theorem~3.2 of \cite{fiorenollsatyendraMCM2013}. If we move the first letter to the end iteratively, we have the tetractys rotation sequence
$$FCG \quad \mapsto \quad CGF \quad  \mapsto \quad GFC \quad \mapsto \quad FCG.$$
The meta-rotation $\overline{f}$ constructed in Figure~\ref{fig:rotation_is_metarotation} does this to all transpositions and inversions of $FCG$, and the resulting simply transitive group $$\overline{G}=\langle TI\text{-group}, \, \overline{f} \rangle$$ from Theorem~\ref{thm:generalized} is the internal direct product of the $TI$-group and $\overline{f}$, and has order $24\times3=72$. The dual group $\overline{H}$ can be understood in terms of \cite{fiorenollsatyendraMCM2013}.
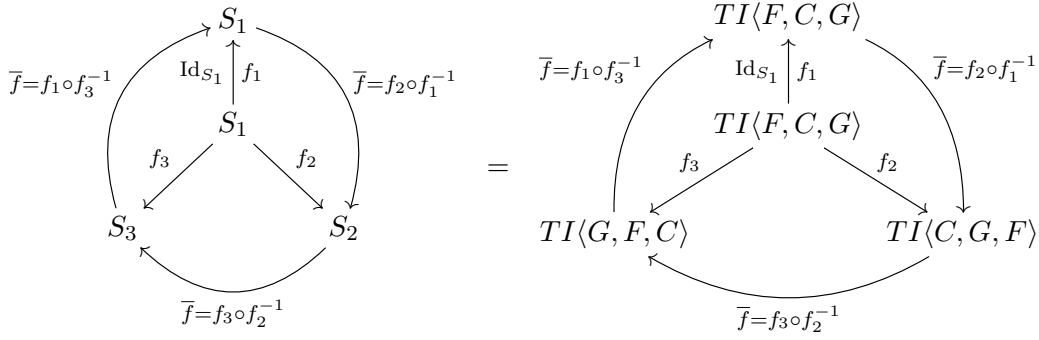
\begin{figure}
$$
\begin{array}{c}
\xymatrix{ & S_1 \ar@/^2.0pc/[ddr]^{\overline{f} = f_2 \circ f_1^{-1}} & \\
& S_1 \ar[dr]^{f_2} \ar[u]^{\text{Id}_{S_1}}_{f_1} \ar[dl]_{f_3} &  \\
S_3 \ar@/^2.0pc/[uur]^{\overline{f} = f_1 \circ f_3^{-1}} & & S_2 \ar@/^2.0pc/[ll]^{\overline{f} = f_3 \circ f_2^{-1}}}
\end{array}
=
\begin{array}{c}
\xymatrix@C=.1cm{ & TI\langle F,C,G \rangle \ar@/^2.0pc/[ddr]^{\overline{f} = f_2 \circ f_1^{-1}} & \\
& TI\langle F,C,G \rangle  \ar[dr]^{f_2} \ar[u]^{\text{Id}_{S_1}}_{f_1} \ar[dl]_{f_3} &  \\
TI\langle G,F,C \rangle  \ar@/^2.0pc/[uur]^{\overline{f} = f_1 \circ f_3^{-1}} & & TI\langle C,G,F \rangle  \ar@/^2.0pc/[ll]^{\overline{f} = f_3 \circ f_2^{-1}}}
\end{array}$$
\caption{Rotation produces a meta-rotation, illustrated in the special case of the modes of the tetractys. The function $f_2$ moves the first pitch class to the end, the function $f_3$ moves the first pitch class to the end and then the second pitch class to the end of that. Consequently, the meta-rotation $\overline{f}$ moves the first letter of any tetractys mode pitch-class segment to the end of the pitch-class segment.} \label{fig:rotation_is_metarotation}
\end{figure}
\end{example}

\begin{example}[{\it Canonic Passacaglia} with Three Classes]
We first apply Theorem~\ref{thm:generalized} to construct a pair of dual groups $\overline{G}$ and $\overline{H}$ for Clare Fischer's {\it Canonic Passacaglia}, acting on three classes: consonant triads, minor seventh chords, and dominant/half-diminished seventh chords. The musical staff at the top of Figure~\ref{fig:canonic_passacaglia} shows two successive chord progressions from the long progression of {\it Canonic Passacaglia}. We see $d$, $D^{-7}$, $B^{\o{} 7}$, $E^7$, and then its transposition down a perfect fourth: $a$, $A^{-7}$, $F\sh^{\o{} 7}$, $B^7$. In Theorem~\ref{thm:generalized} we take $G$ to be the $TI$-group, and take the three sets $S_1$, $S_2$, and $S_3$ to be
$$\triads, \quad \minorsevenths, \quad \text{and} \quad \dom.$$
The two function $S_1 \to S_2$ and $S_1 \to S_3$ are
$$f_{Min7Tr} \colon \;\triads \to \minorsevenths$$
$$f_{Dom7Tr} \colon \; \triads \to \dom$$
given by the formulas in the third row and first row of Table~\ref{table:seventhchord_functions}. These are pictured in the two equal star shaped diagrams at the top of Figure~\ref{fig:canonic_passacaglia}, with the meta-rotation $\overline{f}\colon \;\overline{S} \to \overline{S}$ superimposed. The middle star-shaped diagram shows what all the functions do elementwise on a minor triad in reverse root position.

Since $G$ is the $TI$-group and $S_1$ is $\triads$, $H$ must be the $PLR$-group. Theorem~\ref{thm:generalized} tells us that $\overline{f}$ has order 3, and
$$\overline{G} \cong TI\text{-group} \times \langle \overline{f} \rangle$$
$$\overline{H} \cong PLR\text{-group} \times \langle \overline{f} \rangle,$$
and both groups have order $24 \times 3=72$.

The bottom two networks of Figure~\ref{fig:canonic_passacaglia} show the transformational relationships between the sub-progressions of {\it Canonic Passacaglia}, the horizontal arrows are in $\overline{H}$, and the vertical arrows are in $\overline{G}$. The networks would extend downward to have 13 rows total, with the bottom row the same as the top row.
\begin{figure}
\begin{center}
\includegraphics[scale = 0.3]{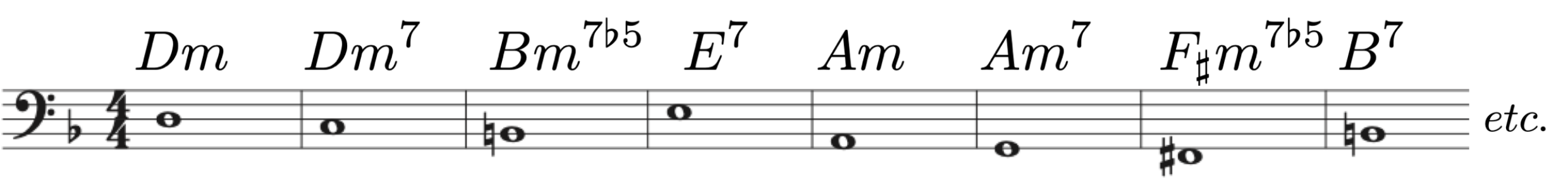}
\end{center}
$$
\begin{array}{c}
\xymatrix{ & S_1 \ar@/^2.0pc/[ddr]^{\overline{f} = f_2 \circ f_1^{-1}} & \\
& S_1 \ar[dr]^{f_2} \ar[u]^{\text{Id}_{S_1}}_{f_1} \ar[dl]_{f_3} &  \\
S_3 \ar@/^2.0pc/[uur]^{\overline{f} = f_1 \circ f_3^{-1}} & & S_2 \ar@/^2.0pc/[ll]^{\overline{f} = f_3 \circ f_2^{-1}}}
\end{array}
=
\begin{array}{c}
\xymatrix@C=.1cm{ & \triads \ar@/^2.0pc/[ddr]^{\overline{f} = f_2 \circ f_1^{-1}} & \\
& \triads \ar[dr]^{f_2} \ar[u]^{\text{Id}_{S_1}}_{f_1} \ar[dl]_{f_3} &  \\
\dom \ar@/^2.0pc/[uur]^{\overline{f} = f_1 \circ f_3^{-1}} & & \minorsevenths \ar@/^2.0pc/[ll]^{\overline{f} = f_3 \circ f_2^{-1}}}
\end{array}$$
$$\entrymodifiers={+<2mm>[F-]}
\xymatrix{ *=<0pt,0pt>{} & \langle w, w-4, w-7 \rangle \ar@/^2.0pc/[ddr]^{\overline{f} = f_2 \circ f_1^{-1}} & *=<0pt,0pt>{} \\
*=<0pt,0pt>{} & \langle w, w-4, w-7 \rangle \ar[dr]^{f_2} \ar[u]^{\text{Id}_{S_1}}_{f_1} \ar[dl]_{f_3} & *=<0pt,0pt>{} \\
\langle w, w-4, w-7, w-10 \rangle \ar@/^2.0pc/[uur]^{\overline{f} = f_1 \circ f_3^{-1}} & *=<0pt,0pt>{} & \langle w, w-4, w-7, w-9 \rangle \ar@/^2.0pc/[ll]^{\overline{f} = f_3 \circ f_2^{-1}}}
$$
$$\entrymodifiers={=<2.2pc>[o][F-]}
\xymatrix@C=2.5pc@R=2.5pc{d   \ar[r]^{\overline{f}} \ar[d]_{T_{-5}} & D^{-7} \ar[r]^{\overline{f}} \ar[d]_{T_{-5}} &  B^{\o{} 7}
\ar[r]^{K_{3,4}} \ar[d]^{T_{-5}} &  E^7 \ar[d]^{T_{-5}}  \\
a \ar[r]^{\overline{f}} & A^{-7} \ar[r]^{\overline{f}} & F\sh^{\o{} 7} \ar[r]^{K_{3,4}} &  B^7 } $$
$$\entrymodifiers={+<2mm>[F-]}
\xymatrix@C=2.5pc@R=2.5pc{\langle 9,5,2 \rangle   \ar[r]^{\overline{f}} \ar[d]_{T_{-5}} & \langle 9,5,2,0 \rangle \ar[r]^{\overline{f}} \ar[d]_{T_{-5}} &  \langle 9,5,2,11 \rangle
\ar[r]^{K_{3,4}} \ar[d]^{T_{-5}} &  \langle 4,8,11,2 \rangle\ar[d]^{T_{-5}}  \\
\langle 4,0,9 \rangle \ar[r]^{\overline{f}} & \langle 4,0,9,7\rangle \ar[r]^{\overline{f}} & \langle 4,0,9,6 \rangle \ar[r]^{K_{3,4}} &  \langle 11, 3,6, 9 \rangle } $$
\caption{{\it Canonic Passacaglia} by Clare Fischer. The notation $Dm^7$ is another notation for the $D$ minor seventh chord, $D^{-7}$. Recall that a minor 7 flat 5 chord is another name for a half-diminished seventh chord, so $Bm^{7\fl 5}$ is the same as $B^{\o{} 7}$. This is an example of Theorem~\ref{thm:generalized} with 3 sets $S_1$, $S_2$, and $S_3$ which are \triads, \minorsevenths, and \dom. In Theorem~\ref{thm:generalized} we took $G$ to be $TI$-group  and we took $H$ to be the $PLR$-group. The top two star-shaped diagrams of $G$-sets are equal, and show also the meta-rotation $\overline{f}$. The middle star-shaped diagram shows the meta-rotation $\overline{f}$ element-wise on a minor triad in reverse root position with root $w-7$, a minor seventh chord in reverse root position with same root $w-7$, and a half-diminished seventh chord in reverse root position with root $w-10$. The bottom two networks both illustrate the first two sequences in {\it Canonic Passacaglia}. An audio excerpt is available at \url{https://www.youtube.com/watch?v=irYWgD4vs-4}.} \label{fig:canonic_passacaglia}
\end{figure}
\end{example}

\begin{example}[Simply Transitive Group Action on 5 Classes] \label{examp:5classes}
We next apply Theorem~\ref{thm:generalized} to construct a pair of dual groups $\overline{G}$ and $\overline{H}$ that act on five classes: consonant triads, major seventh chords, dominant/half-diminished seventh chords, minor seventh chords, and diminished seventh chords.
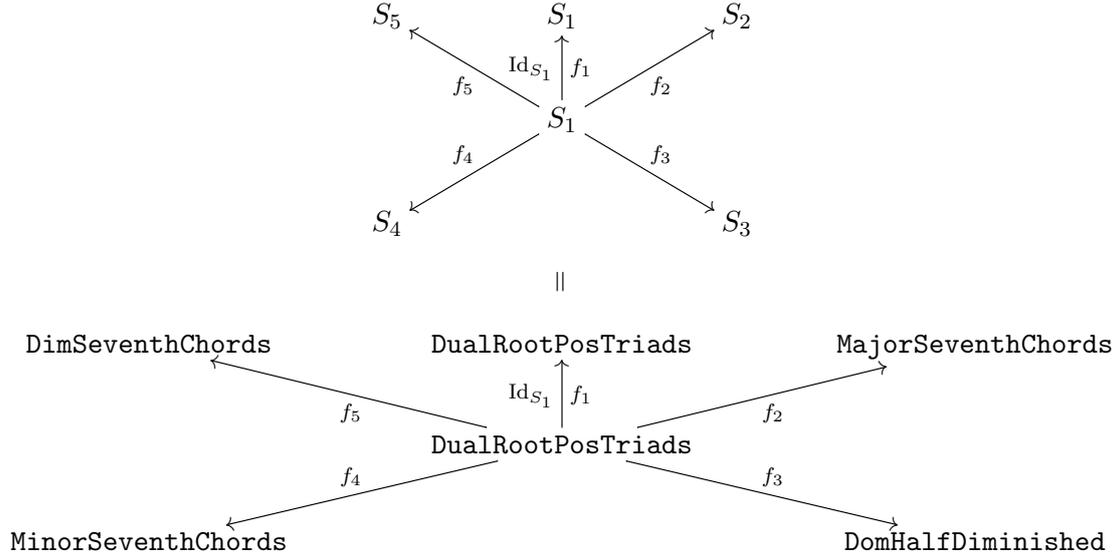
\begin{figure}
$$\xymatrix@C=4pc{ S_5 & S_1 & S_2 \\
& S_1 \ar[u]^{\text{Id}_{S_1}}_{f_1} \ar[ul]^{f_5} \ar[ur]_{f_2} \ar[dl]_{f_4} \ar[dr]^{f_3} & \\
S_4 & & S_3}$$
\begin{center}
\makebox[0pt][c]{\rotatebox{90}{$=$}}
\end{center}
$$\xymatrix@C=4pc{ \diminishedsevenths & \triads & \majorsevenths \\
& \triads \ar[u]^{\text{Id}_{S_1}}_{f_1} \ar[ul]^{f_5} \ar[ur]_{f_2} \ar[dl]_{f_4} \ar[dr]^{f_3} & \\
\minorsevenths & & \dom}$$
\caption{Star-shaped diagram to produce the meta-rotation in Figure~\ref{fig:5classes_smoothvoiceleading} to construct a pair of dual groups that act on five classes. The maps in this diagram are the identity and the four $TI$-equivariant bijections from Table~\ref{table:seventhchord_functions}. \label{fig:5classes_starshape}}
\end{figure}
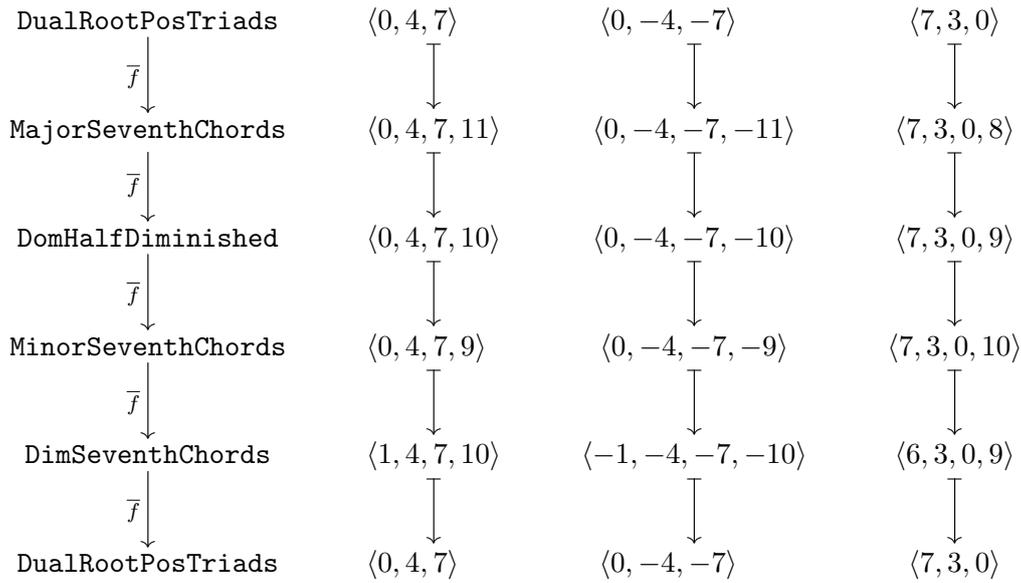
\begin{figure}
$$\xymatrix{\triads \ar[d]_{\overline{f}} & \langle 0,4,7 \rangle \phantom{,11} \ar@{|->}[d] & \langle 0,-4,-7 \rangle \phantom{-11} \ar@{|->}[d] & \langle 7,3,0 \rangle \ar@{|->}[d] \\
\majorsevenths \ar[d]_{\overline{f}} & \langle 0,4,7,11 \rangle \ar@{|->}[d] & \langle 0,-4,-7,-11 \rangle \ar@{|->}[d] & \langle 7,3,0,8 \rangle \ar@{|->}[d] \\
\dom \ar[d]_{\overline{f}} & \langle 0,4,7,10 \rangle \ar@{|->}[d] & \langle 0,-4,-7,-10 \rangle \ar@{|->}[d]
& \langle 7,3,0,9 \rangle \ar@{|->}[d] \\
\minorsevenths \ar[d]_{\overline{f}} & \langle 0,4,7,9 \rangle \phantom{0} \ar@{|->}[d] & \langle 0,-4,-7,-9 \rangle \ar@{|->}[d] & \langle 7,3,0,10 \rangle \ar@{|->}[d]\\
\diminishedsevenths \ar[d]_{\overline{f}} & \langle 1,4,7,10 \rangle \ar@{|->}[d] & \langle -1,-4,-7,-10 \rangle \ar@{|->}[d] & \langle 6,3,0,9 \rangle \ar@{|->}[d]\\
\triads & \langle 0,4,7 \rangle \phantom{,11} & \langle 0,-4,-7 \rangle \phantom{-11} & \langle 7,3,0 \rangle
}$$
\caption{The star-shaped diagram in Figure~\ref{fig:5classes_starshape} produces this sequence of maps. These fit together to form a meta-rotation $\overline{f}$ in Theorem~\ref{thm:generalized} for a dual pair of groups acting on the union of triads with four seventh-chord $TI$-classes of pcsegs. For readability, the second and third columns show the assignments only on the elements $\langle 0,4,7 \rangle$ and $\langle 0,-4, -7 \rangle$, to see the assignments on all elements of Table~\ref{table:seventhchord_sets} simply add $w$ everywhere in the second and third columns. This abbreviation works because each function in this $\overline{f}$ is of the form $f_{j+1} \circ f_j^{-1}$ where $f_{j+1}$ and $f_j$ are $TI$-equivariant bijections from Table~\ref{table:seventhchord_functions}. The fourth column shows the assignments on the $c$ chord for concreteness. This choice of $\overline{f}$ has all voice leading motions by half steps, except in the implicit root doubling in the last function, where $10$ moves to $0$ in the second column, and $-10$ moves to $0$ in the third column. {\it Two} voices move by a half step when going into diminished seventh chords, but everywhere else only {\it one} voice moves. \label{fig:5classes_smoothvoiceleading}}
\end{figure}
The star-shaped diagram of $TI$-equivariant bijections in Figure~\ref{fig:5classes_starshape} produces an especially nice meta-rotation $\overline{f}$ in Figure~\ref{fig:5classes_smoothvoiceleading}, it has smooth voice leading with half-step motion and preservation of 3 tones in all but one transformation. {\it Note} that the class ordering for $\overline{f}$ in Figure~\ref{fig:5classes_smoothvoiceleading} is different from the class ordering in Tables~\ref{table:seventhchord_sets} and \ref{table:seventhchord_functions} because major sevenths and dominant/half-diminished are interchanged. For this example, in Theorem~\ref{thm:generalized} we use the star-shaped diagram in Figure~\ref{fig:5classes_starshape}, and take $G$ to be the $TI$-group. Since $G$ is the $TI$-group and $S_1$ is $\triads$, $H$ must be the $PLR$-group. Theorem~\ref{thm:generalized} tells us that $\overline{f}$ has order 5, and
$$\overline{G} \cong TI\text{-group} \times \langle \overline{f} \rangle$$
$$\overline{H} \cong PLR\text{-group} \times \langle \overline{f} \rangle,$$
and both groups have order $24 \times 5=120$.

This group $\overline{H}$ is particularly interesting: it is a simply transitive group acting on triads and sevenths, the generators $P$, $L$, $R$, and $\overline{f}$ have parsimonious voice leading, and $\overline{f}$ maps between the different classes. Moreover, the algebra inside of $\overline{H}$ is computationally nice, since for $h \in H = PLR\text{-group}$ the extension of $h$ to all of $\overline{S}$,
$$\overline{h}=\bigsqcup_{j=1}^n f_jhf^{-1}_j,$$
is quite intuitive because of Proposition~\ref{prop:TI-equivariance_of_the_seventh_bijections}. Namely, for $h=K_{i,j}$ with $1 \leq i,j \leq 3$ in the $PLR$-group (different $j$ from disjoint union above), the extension $\overline{h}$ on major seventh chords, dominant/half-diminished seventh chords, and minor seventh chords is also $K_{i,j}$, but not on the diminished seventh chords. For $h=Q_i$, the extension $\overline{h}$ on all the seventh chords is also $Q_i$. And $\overline{K_{i,j}}$ and $\overline{Q_i}$ commute with the five powers of $\overline{f}$.

To be explicit, we work out in this group these conjugations $f_j h f_j^{-1}$ for $h=K_{1,3}=P$ on half of the elements (the elements containing a major triad). These computations confirm again quickly that the conjugations of $K_{1,3}$ to major sevenths, dominant seventh/half-diminished seventh chords, and minor seventh chords is again $K_{1,3}$: on the first three entries we see $P$, and on the fourth input entry we evaluate $I_{w+w+7}$ and see the fourth output entry. On the diminished seventh chords, the conjugation of $K_{1,3}$ is {\it not} $K_{1,3}$.
\begin{align*}
    P\langle w,\, w+4,\, w+7,\, w+11\rangle&=f_{Maj7Tr}Pf_{Maj7Tr}^{-1}\langle w,\,w+4,\,w+7,\,w+11\rangle \\
    &= f_{Maj7Tr}P\langle w,\,w+4,\,w+7\rangle  \\
    &=f_{Maj7Tr}\langle w+7,\, w+3,\, w\rangle \\
    &=\langle w+7,\,w+3,\,w,\,w+8\rangle
\end{align*}
\begin{align*}
    P\langle w,\,w+4,\,w+7,\,w+10\rangle&=f_{Dom7Tr}Pf_{Dom7Tr}^{-1}\langle w,\, w+4,\, w+7,\, w+10\rangle \\
    &=f_{Dom7Tr}P\langle w,\,w+4,\,w+7\rangle \\
    &=f_{Dom7Tr}\langle w+7,\, w+3,\, w \rangle \\
    &= \langle w+7,\, w+3,\, w,\, w+9 \rangle
\end{align*}
\begin{align*}
    P\langle w,\,w+4,\,w+7,\,w+9\rangle&=f_{Min7Tr}Pf_{Min7Tr}^{-1}\langle w,\, w+4,\,w+7,\,w+9\rangle \\
    &=f_{Min7Tr}P\langle w,\,w+4,\,w+7\rangle \\
    &=f_{Min7Tr}\langle w+7,\, w+3,\, w \rangle \\
    &= \langle w+7,\, w+3,\, w,\, w+10 \rangle
\end{align*}
\begin{align*}
    P\langle w+1,\,w+4,\,w+7,\,w+10\rangle&= f_{Dim7Tr}Pf_{Dim7Tr}^{-1}\langle w,\, w+4,\,w+7,\,w+10\rangle\\
    &=f_{Dim7Tr}P\langle w,\,w+4,\,w+7\rangle \\
    &=f_{Dim7Tr}\langle w+7,\, w+3,\, w \rangle \\
    &=\langle w+6,\, w+3,\, w,\, w+9 \rangle
\end{align*}
\end{example}

\begin{remark}
In Example~\ref{examp:5classes}, as in other examples with a meta-rotation, the analyst may make other choices for $\overline{f}$ to obtain an isomorphic group but with a different $\overline{f}$ action. In Figures~\ref{fig:alternativefbar1} and \ref{fig:alternativefbar2} we show two other choices for $\overline{f}$ in Example~\ref{examp:5classes} on 5 classes. For readability, we only indicate the maps on the $C$ triad. The salient features of these choices are in their captions. For the consonant triads and 4 seventh chord types in our $\overline{S}$ in Example~\ref{examp:5classes}, it is not possible to have only one voice move between each pitch-class segment. If we expand the $\overline{S}$ to include other chord types (augmented and diminished triads, mM7, and AugM7), then it is possible to have exactly one voice move by a semitone between sets. We do not work out that example in this paper.
\begin{figure}[htbp]
    \centering
    \begin{minipage}[t]{0.45\textwidth}
        \centering
        \begin{tikzpicture}[font = \scriptsize]
  \foreach \i/\label in {
  1/{\(\langle\mathbf{0},4,7\rangle\)},
  2/{\(\langle\mathbf{0},4,7,10\rangle\)},
  3/{\(\langle\mathbf{1},4,7,10\rangle\)},
  4/{\(\langle0,4,7,\mathbf{9}\rangle\)},
  5/{\(\langle\mathbf{0},4,7,11\rangle\)}
} {
    \node[minimum size=1cm] (N\i) at ({90 - 72*(\i-1)}:2cm) {\label};
  }
  \node at (0,0) {\text{root moves}};

   \draw[->, bend left=15] (N1) to node[auto] {\text{add voice}} (N2);
  \draw[->, bend left=15] (N2) to node[auto] {\shortstack{{\text{1 voice,}}\\{\text{1 semitone}}}} (N3);
  \draw[->, bend left=15] (N3) to node[auto] {\shortstack{{\text{2 voices,}}\\{\text{1 semitone each}}}} (N4);
  \draw[->, bend left=15] (N4) to node[auto] {\shortstack{{\text{1 voice,}}\\{\text{2 semitones}}}} (N5);
  \draw[->, bend left=15] (N5) to node[auto] {\text{drop voice}} (N1);
\end{tikzpicture}
        \caption{Another choice for $\overline{f}$ for consonant triads and the four types of seventh chords in Example~\ref{examp:5classes}, we indicate only the $\overline{f}$-orbit of $C$ for readability. Voice motion is always by a semi-tone, except in one transformation. The {\bf bolded} sequence of roots is $C$, $C$, $C\sh$, $A$, $C$, $C$, so the root changes only 3 times, one less than the choice in Figure~\ref{fig:5classes_smoothvoiceleading}. On inverted forms the opposite is true: the root changes 4 times for this $\overline{f}$, but only 3 times in Figure~\ref{fig:5classes_smoothvoiceleading}.}
        \label{fig:alternativefbar1}
    \end{minipage}%
    \hfill
    \begin{minipage}[t]{0.45\textwidth}
        \centering
        \begin{tikzpicture}[font = \scriptsize]
  \foreach \i/\label in {
  1/{\(\langle\mathbf{0},4,7\rangle\)},
  2/{\(\langle\mathbf{0},4,7,11\rangle\)},
  3/{\(\langle\mathbf{0},4,7,10\rangle\)},
  4/{\(\langle\mathbf{0},3,7,10\rangle\)},
  5/{\(\langle\mathbf{0},3,6,9\rangle\)}
} {
    \node[minimum size=1cm] (N\i) at ({90 - 72*(\i-1)}:2cm) {\label};
  }
  \node at (0,0) {\text{root does not move}};

   \draw[->, bend left=15] (N1) to node[auto] {\text{add voice}} (N2);
  \draw[->, bend left=15] (N2) to node[auto] {\shortstack{{\text{1 voice,}}\\{\text{1 semitone}}}} (N3);
  \draw[->, bend left=15] (N3) to node[auto] {\shortstack{{\text{1 voice,}}\\{\text{1 semitone}}}} (N4);
  \draw[->, bend left=15] (N4) to node[auto] {\shortstack{{\text{2 voices,}}\\{\text{1 semitone each}}}} (N5);
  \draw[->, bend left=15] (N5) to node[auto] {\shortstack{{\text{1 voice 1 semitone,}}\\{\text{drop voice}}}} (N1);
\end{tikzpicture}
        \caption{Yet another choice for $\overline{f}$ for consonant triads and the four types of seventh chords in Example~\ref{examp:5classes}, we indicate only the $\overline{f}$-orbit of $C$ for readability. This $\overline{f}$ would require changes to the 2nd and 3rd columns of the minor seventh chord row in Table~\ref{table:seventhchord_sets}: $TI\langle 9,0,4,7\rangle$, and $\langle w+9, w, w+4, w+7\rangle$, and $\langle w-9, w, w-4, w-7\rangle$. This would allow us to have $\langle 0,3,7,10\rangle$ with $w=3$ in in the sketch here of the orbit of $\langle 0,4,7 \rangle$. In this $\overline{f}$, the root stays the same and we have semi-tonal motion, but we have two moving voices twice, and the aforementioned alteration of Table~\ref{table:seventhchord_sets} and the concomitant alteration of the function $f_{Min7Tr}$ in Table~\ref{table:seventhchord_functions}.}
        \label{fig:alternativefbar2}
    \end{minipage}
\end{figure}
\end{remark}








\begin{example}[Analysis of Jazz Standard ``Autumn Leaves'' by Joseph Kosma] \label{AutumnLeaves}
The meta-rotation of Figure~\ref{fig:5classes_smoothvoiceleading} and Example~\ref{examp:5classes} allows us to make a network for the progression in ``Autumn Leaves'' in Figure~\ref{fig:Autumn}. This passage is a falling fifth sequence of seven chords, organized into three pairs of sevenths and a final tonic triad $g$. Additionally, in the soprano there is a sequence of two falling seconds and a falling fourth. In the treble we also see three broken triads (each with a passing tone) including into three seventh chords.

The variety of seventh chords in this passage presents a transformational challenge. The top two equal networks attempt to capture the three falling fifth pairs in the usual way of a product network with equal transformations on opposite sides of each square, but that cannot be attained because $\overline{f}$ is not root preserving regarding minor sevenths and because of the variety of the sevenths. Consequently, the opposite vertical arrows in each square are not equal. Nevertheless, the diagram does succeed in capturing common-tone preservation in each horizontal pair via the horizontal contextual inversions, and the diagram also succeeds with horizontal morphisms solely in $\overline{H}$ and vertical morphisms solely in $\overline{G}$. Our previous selections for the pitch-class segments do mean here that chords on the right of the two top networks are in root position, as in the musical staff. In the top two networks, we have substituted $A^{\o{}7}$ with its alteration $A^{-7}$.

The bottom two equal networks offer a different analysis, working diatonically with the $B\fl$-major scale identified with $\mathbb{Z}_7$, so $0=B\fl$, $1=C$, \dots, and $6=A$. With this $\mathbb{Z}_7$ encoding, the $E\fl$ root position triad is $\langle 3,5,0 \rangle$, the $d$ root position triad is $\langle 2,4,6 \rangle$, and the $c$ root position triad is $\langle 1,3,5 \rangle$ (there is no difference between major and minor). In the bottom two equal networks, the $\overline{f}$ is the inclusion of a root position diatonic triad into the top three notes of a root position diatonic seventh (there are only two $TI$-classes in these networks, the situation of Theorem~\ref{thm:construction_of_action_on_disjoint_union}). The bottom two equal networks capture the falling fifth pairs better than the top two networks because {\it mod 7} on $TI\langle 0,2,4,6 \rangle$ we have that $K_{1,2}$ is the same as $Q_{-4}$ postcomposed with retrograde, denoted $Q_{-4}^{\text{retro}}$. More precisely,
\begin{align*}
K_{1,2}\langle w,\, w+2,\, w+4,\, w+6 \rangle = \langle w+2,\, w,\, w+5,\, w+3\rangle,
\end{align*}
\begin{align*}
Q_{-4} \langle w,\, w+2,\, w+4,\, w+6 \rangle = \langle w+3,\, w+5,\, w,\, w+2 \rangle,
\end{align*}
\begin{align*}
Q_{-4}^{\text{retro}} \langle w,\, w+2,\, w+4,\, w+6 \rangle = \langle w+2,\, w,\, w+5,\, w+3 \rangle ,
\end{align*}
so that the falling fifth pairs can be described by either $K_{1,2}$ or $Q_{-4}^\text{retro}$. The bottom two networks also capture the stepwise descending motion in the soprano better than the top two networks. Namely, all the vertical maps are $T_{-1}$, and the soprano is the same as the roots of $E\fl$, $d$, and $c$ in the left vertical.

\begin{figure}
\begin{center}
\resizebox*{15cm}{!}{\includegraphics{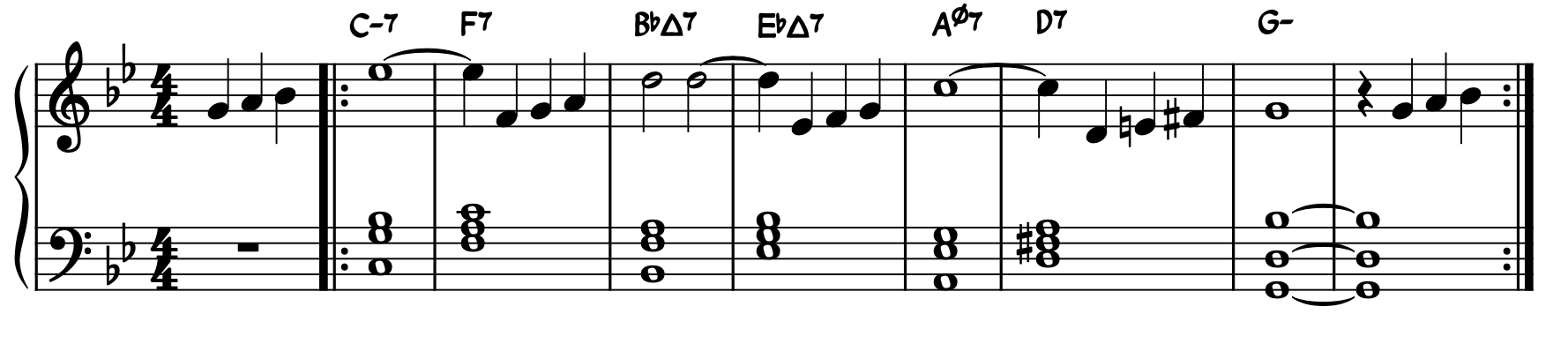}}
\end{center}
\vspace{-.25in}
$$
\begin{array}{c}
\entrymodifiers={=<2.5pc>[o][F-]}
\xymatrix@C=4.5pc@R=3pc{C^{-7} \ar[r]^{\left( \overline{f}\right)^{-1}K_{3,3}} \ar[d]_{\left( \overline{f}\right)^{-2} T_{2}} & F^7 \ar[d]^{\left(\overline{f}\right)^{-1}T_{-2}} \\
B\fl^{\triangle 7} \ar[r]^{K_{3,4}} \ar[d]_{\left(\overline{f}\right)^2T_{-5}} & E\fl^{\triangle 7} \ar[d]^{\left(\overline{f}\right)T_{-1}} \\
A^{-7} \ar[r]_{\left(\overline{f}\right)^{-1}K_{3,3}} \ar[d]_{\left(\overline{f}\right)^{-3} T_{-2} } & D^7  \\
g & *=<0pt,0pt>{} }
\end{array}
\qquad = \qquad
\begin{array}{c}
\text{\small inverted forms} \quad \quad \quad \text{\small transposed forms}\\
\entrymodifiers={+<2mm>[F-]}
\xymatrix@C=4.5pc@R=3pc{\langle 7, 3, 0, 10 \rangle \ar[r]^{\left(\overline{f}\right)^{-1}K_{3,3}} \ar[d]_{\left(\overline{f}\right)^{-2} T_{2}} & \langle 5, 9, 0, 3\rangle \ar[d]^{\left(\overline{f}\right)^{-1}T_{-2}} \\
\langle 9, 5, 2, 10\rangle \ar[r]^{K_{3,4}} \ar[d]_{\left(\overline{f}\right)^2T_{-5}} & \langle 3, 7, 10, 2\rangle \ar[d]^{\left(\overline{f}\right)T_{-1}} \\
\langle 4,0,9,7 \rangle \ar[r]_{\left(\overline{f}\right)^{-1}K_{3,3}} \ar[d]_{\left(\overline{f}\right)^{-3} T_{-2}} & \langle 2, 6, 9, 0 \rangle  \\
\langle 2, 10, 7 \rangle & *=<0pt,0pt>{} }
\end{array}
$$
\vspace{-.25in}
$$
\begin{array}{c}
\entrymodifiers={=<2.5pc>[o][F-]}
\xymatrix@C=3pc@R=3pc{
E\fl \ar[r]^{\overline{f}} \ar[d]^{T_{-1}}
& C^{-7} \ar[r]^{K_{1,2}}_{Q_{-4}^\text{retro}} \ar[d]^{T_{-1}}
& F^7 \ar[d]^{T_{-1}}
\\
d \ar[r]^{\overline{f}} \ar[d]^{T_{-1}}
& B\fl^{\triangle7} \ar[r]^{K_{1,2}}_{Q_{-4}^\text{retro}} \ar[d]^{T_{-1}}
& E\fl^{\triangle7} \ar[d]^{T_{-1}}
\\
c \ar[r]^{\overline{f}}
& A^{\o{}7} \ar[r]^{K_{1,2}}_{Q_{-4}^\text{retro}}
& D^7}
\end{array}
\qquad = \qquad
\begin{array}{c}
\entrymodifiers={+<2mm>[F-]}
\xymatrix@C=3pc@R=4pc{
\langle 3,5,0 \rangle \ar[r]^{\overline{f}} \ar[d]^{T_{-1}}
& \langle 1,3,5,0 \rangle  \ar[r]^{K_{1,2}}_{Q_{-4}^\text{retro} } \ar[d]^{T_{-1}}
& \langle 3,1,6,4 \rangle  \ar[d]^{T_{-1}}
\\
\langle 2,4,6 \rangle  \ar[r]^{\overline{f}} \ar[d]^{T_{-1}}
& \langle 0,2,4,6 \rangle  \ar[r]^{K_{1,2}}_{Q_{-4}^\text{retro} } \ar[d]^{T_{-1}}
& \langle 2,0,5,3 \rangle  \ar[d]^{T_{-1}}
\\
\langle 1,3,5 \rangle  \ar[r]^{\overline{f}}
& \langle 6,1,3,5 \rangle  \ar[r]^{K_{1,2}}_{Q_{-4}^\text{retro} }
& \langle 1,6,4,2 \rangle }
\end{array}
$$
\caption{The musical staff is a textural reduction and harmonic analysis of ``Autumn Leaves", composed by Joseph Kosma, measures 1--8. The first two equal networks are an interpretation of three falling fifth pairs, using the meta-rotation $\overline{f}$ from Figure~\ref{fig:5classes_smoothvoiceleading} and Example~\ref{examp:5classes}. In the two networks we used the alteration $A^{-7}$ in place of $A^{\o{}7}$ (pitch $E$ instead of pitch $E\flat$) in order to have a more symmetric network. The left and right vertical transformations in each square are not the same because $\overline{f}$ is not root preserving when mapping to minor seventh chords, for instance on $A^{-7}$ in the inverted order we have $\left( \overline{f}\right)^{-2} (A^{-7}) = \left(\overline{f}\right)^{-2} (\langle 4, 0, 9, 7 \rangle ) = \langle 4, 0, 9, 5 \rangle = F^{\triangle 7}$. Instead of the $\overline{f}$ from Figure~\ref{fig:5classes_smoothvoiceleading} we could also use the $\overline{f}$ from Figure~\ref{fig:basicexample_multiset}, because diminished sevenths are not in the passage (the powers in the two networks would stay the same).  The bottom two equal diagrams are an alternative {\it mod 7} interpretation involving just two classes: mod 7 triads as $TI\langle 0,2,4\rangle$ and mod 7 sevenths as $TI\langle 0,2,4,6 \rangle$. The mod 7 encoding is the $B\fl$ major scale, so $0=B\fl$. The different $\overline{f}$ in the bottom two figures is the inclusion of a triad into the top of a seventh. In the bottom two networks, we do not replace $A^{\o{}7}$ with $A^{-7}$, but we do signify both the diatonic pitch classes $F$ and $F\sh$ by 4 in the final $D^7$. The bottom two equal networks capture the falling fifth pairs better than the top two networks because {\it mod 7} on $TI\langle 0,2,4,6 \rangle$ we have that $K_{1,2}$ is the same as $Q_{-4}$ composed with retrograde. The bottom two networks also capture the stepwise descending motion in the soprano via $T_{-1}$ and the roots of $E\fl$, $d$, and $c$. }  \label{fig:Autumn}
\end{figure}
Typical voice leadings of diatonic falling fifth sequences of seventh chords involve two alternating types of chord inversions, such as root position chords $S_1$ and four-three (second inversion seventh) chords $S_2$ or six-five (first inversion seventh) chords $S_3$  and four-two (third inversion seventh) chords $S_4$. In our notated version of the ``Autumn Leaves'' progression, we use root position chords throughout, but we still use an alternation of a closer $S_5$ and a wider spacing $S_6$ of the scale degrees.
Thus, in all three cases we may analyze such progressions as flip-flop-cycles in a direct product $\mathbb{Z}_2 \times \mathbb{Z}_7$ generated by a meta-rotation $\overline{f}_i$ between two sets and a scale degree transposition $t_1$. Concretely:
$$\begin{array}{llll}
f_1: S_1 \to S_2: & \langle n, n+2, n+4, n+6 \rangle & \mapsto & \langle n, n+2, n+3, n+5 \rangle \\
f_2: S_3 \to S_4: &\langle n, n+2, n+4, n+5 \rangle & \mapsto & \langle n, n+1, n+3, n+5 \rangle \\
f_3: S_5 \to S_6: & \langle n, n+2, n+4, n+6 \rangle & \mapsto & \langle n, n+4, n+6, n+2 \rangle.
\end{array}$$
We define $\overline{f}_1: S_1 \sqcup S_2 \to S_1 \sqcup S_2$ with $\overline{f_1}|_{S_1} = f_1$ and $\overline{f_1}|_{S_2} = f_1^{-1}$, analogously for $\overline{f_2}: S_3 \sqcup S_4 \to S_3 \sqcup S_4$ and $\overline{f_3}: S_5 \sqcup S_6 \to S_6 \sqcup S_5$.
The diatonic falling fifth progression in our setting of ``Autumn Leaves'' (see Figure~\ref{fig:Autumn}) corresponds to the iteration of the transformation
$t_{-4} \circ  \overline{f_3}$. For the other two examples we obtain the transformations which are shown in the annotation of Figure~\ref{fig:FifthFall}.

\begin{figure}[h]
\begin{center}
\resizebox*{10 cm}{!}{\includegraphics{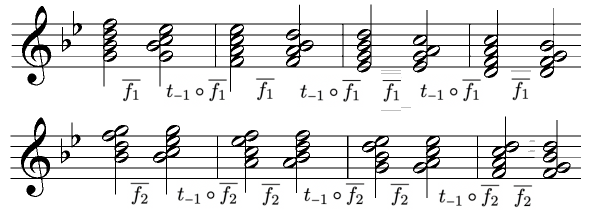}}
\end{center}
\caption{Two kinds of alternating voice-leadings in a diatonic falling fifth sequence of seventh chords: root positions and four-three-chords (upper system), six-five chords and four-two chords (lower system). Both systems show only one half of the complete progressions of length $14$.}
\label{fig:FifthFall}
\end{figure}
The transformations $t_{-1} \circ \overline{f_1}$ and $t_{-1} \circ \overline{f_2}$ are both of order $14$, but they generate these chords in a different order. Below we will see how to choose the appropriate meta-rotation to get the falling fifth sequence as a cyclic orbit with our construction.


Recall that John Clough \cite{cloughFlipFlop} discusses flip-flop-cycles and cyclic groups as theoretical alternatives. The associated cyclic group can in our case we by expressed
in terms of the $J$ function from \cite{cloughdouthett_1991}, recalled on page 81 of \cite{douthettFIPSandDynVoiceLeading} as
$$J_{c,d}^{m}(k)\overset{\text{def}}{=}\left\lfloor{\dfrac{ck+m}{d}} \right\rfloor,$$
where $c$ is the {\it chromatic cardinality}, $d$ is the {\it diatonic cardinality}, and $m$ is the {\it mode index}.
The transformation: $\phi: J_{7, 4}^m \langle 0,1,2,3 \rangle \mapsto J_{7, 4}^{m-1} \langle 0,1,2,3 \rangle$ acts on the 28-element set of the inversions of the generic seventh chords in $\mathbb{Z}_7$ and is cyclic of order $28$. Its square $\phi^2: J_{7, 4}^m \langle 0,1,2,3 \rangle \mapsto J_{7, 4}^{m-2}\langle 0,1,2,3 \rangle $ is of order $14$ and generates both orbits in Figure \ref{fig:FifthFall}. The right choice for the meta-rotation is therefore $\overline{f} = \phi^{14}$ sending $\langle n, n+2, n+4, n+6 \rangle$ to $\langle n+4, n+6, n, n+2 \rangle$ and sending $\langle n, n+2, n+4, n+5 \rangle$ to $\langle n+4, n+5, n, n+2\rangle$. The cyclic generator $\phi^2$ has the form $\phi^2 = t_4 \circ \overline{f}$ and works for both progressions.

\end{example}

\begin{example} \label{examp:generated_scales}
Another example of Theorem~\ref{thm:generalized} can be found in the theory of generated scales. Consider the meta-rotation on the $T$-classes of the tetractys, the pentatonic, the diatonic, and the chromatic pitch class segments (all generated by a perfect fifth). Here we indicate $\overline{f}$ just on the tetractys and its images, but we know that this $\overline{f}$ will commute with all transpositions by Theorem~\ref{thm:conjugation_with_extension}.
\begin{equation} \label{equ:tetractys_images}
\langle F,C,G \rangle \overset{\overline{f}}{\mapsto} \langle F,C,G,D,A \rangle \overset{\overline{f}}{\mapsto}  \langle F,C,G,D,A,E,B \rangle \overset{\overline{f}}{\mapsto} \langle F,C,G, \dots, A\sh \rangle  \overset{\overline{f}}{\mapsto} \langle F,C,G \rangle
\end{equation}
This $\overline{f}$ could also be constructed from a star-shaped diagram with the tetractys translates in the middle. The outcome of Theorem~\ref{thm:generalized} now produces a self-dual group generated by $T_1$ and $\overline{f}$, with $12 \times 4 = 48$ elements, acting simply transitively on the union of translates of the pitch-class segments in \eqref{equ:tetractys_images}. The group is clearly isomorphic to $\mathbb{Z}_{12} \times \mathbb{Z}_{4}$.
\end{example}

\begin{example}[The Flattening Transformation and the Dynamical Voice Leading of \cite{douthettFIPSandDynVoiceLeading}]
This example of Theorem~\ref{thm:generalized} starts with the four $T$-classes of the root-position diatonic seventh chords  $\langle 0, 4, 7, 11\rangle$, $\langle 0, 4, 7, 10 \rangle$,
$\langle 0, 3, 7, 10 \rangle$ and $\langle 0, 3, 6, 10\rangle$, encoded as pc-segs. On each of these four sets $S_i$, we have an action of the transposition group $\langle T_1
\rangle\cong \mathbb{Z}_{12}$. On their disjoint union we have the following meta-rotation $\overline{f}$
\begin{equation} \label{equ:fbar_for_J}
\xymatrix{(\cdot)^{\triangle 7} \ar@{|->}[r]^{\overline{f}} & (\cdot)^7 \ar@{|->}[r]^{\overline{f}} & (\cdot)^{-7} \ar@{|->}[r]^{\overline{f}} & (\cdot)^{\o{}7} \ar@{|->}[r]^{\overline{f}} & (\cdot)^{\triangle 7}}
\end{equation}
preserving the root of each seventh chord:
$$\begin{array}{lll}
\overline{f}\langle n, n+4, n+7, n+11 \rangle & = & \langle n, n+4, n+7, n+10 \rangle \\
\overline{f}\langle n, n+4, n+7, n+10 \rangle & = & \langle n, n+3, n+7, n+10 \rangle \\
\overline{f} \langle n, n+3, n+7, n+10 \rangle & = & \langle n, n+3, n+6, n+10 \rangle \\
\overline{f} \langle n, n+3, n+6, n+10 \rangle & = & \langle n, n+4, n+7, n+11 \rangle
\end{array}.$$
This meta-rotation $\overline{f}$ is almost the same as the meta-rotation in Figure~\ref{fig:alternativefbar2}, but has some important differences.\footnote{The differences between the $\overline{f}$ here in equation \eqref{equ:fbar_for_J} and the $\overline{f}$ in Figure~\ref{fig:alternativefbar2} are that the $\overline{f}$ here in equation \eqref{equ:fbar_for_J} only uses $T$-classes rather than $TI$-classes, the group $G$ is the $T$-group rather than the $TI$-group, and the consonant triads are not included. Also notable is that in \eqref{equ:fbar_for_J}, the dominant seventh chords and half-diminished seventh chords (in root position!) are two different $S_i$ and $S_j$ because $G$ is the $T$-group, whereas in Figure~\ref{fig:alternativefbar2} they are together in one single $S_i$ (with the half-diminished seventh chords in reverse root position).} This $\overline{f}$ has order 4 and we distinguish it from the map $\overline{f_\flat}$, which coincides with $\overline{f}$ on major seventh, dominant seventh and minor seventh chords, but which differs on half-diminished chords: $$\overline{f_\flat}(n, n+3, n+6, n+10) = (n-1, n+3, n+6, n+10).$$ The map $\overline{f_\flat}$ generates a cyclic group of order $48$ acting simply transitively on the set of all diatonic seventh chords. Hence, the relation between the two groups $\langle\overline{f}, T_1 \rangle \cong \mathbb{Z}_4 \times \mathbb{Z}_{12}$ and $\langle \overline{f_\flat} \rangle \cong \mathbb{Z}_{48}$ generalizes the relation between flip-flop-cycles and their associated ``purely" cyclic orbits in \cite{cloughFlipFlop}. The same trajectory through all diatonic seventh chords of length 48 can alternatively be obtained through the successive application of $\overline{f}, \overline{f}, \overline{f}, T_{-1}, \overline{f}, \overline{f}, \overline{f}, T_{-1}, ...$ or $\overline{f_\flat}, \overline{f_\flat}, \overline{f_\flat}, \overline{f_\flat}, \overline{f_\flat}, \overline{f_\flat}, \overline{f_\flat}, \overline{f_\flat}, ....$.
The map $\overline{f_\flat}$ can be interpreted as the ``productive shadow" of the \emph{flattening transformation}. Consider the undecorated abstract note names $\langle C, E, G, B \rangle$ as fixed material for the tonic seventh inside of a diatonic mode; the containing mode decides the decorations on the note names. The flattening transformation simply adds a flat to the containing key signature (corresponding to the ``progression'' of modes); the sequence of resulting pitch-class segments is an ``orbit of the flattening transformation.'' See Figure~\ref{fig:flattening_transformation}.

The flattening transformation can again be formulated in terms of the $J$ function (see Example \ref{AutumnLeaves}):
$$J^m_{12, 7}\left(J^3_{7, 4}(0, 1, 2, 3)\right) = J^m_{12, 7}(0, 2, 4, 6) \quad \mapsto \quad J^{m-1}_{12, 7}\left (J^3_{7, 4}(0, 1, 2, 3) \right) = J^{m-1}_{12, 7}(0, 2, 4, 6).$$
\begin{figure}[H]
\begin{center}
\includegraphics[scale = 0.53]{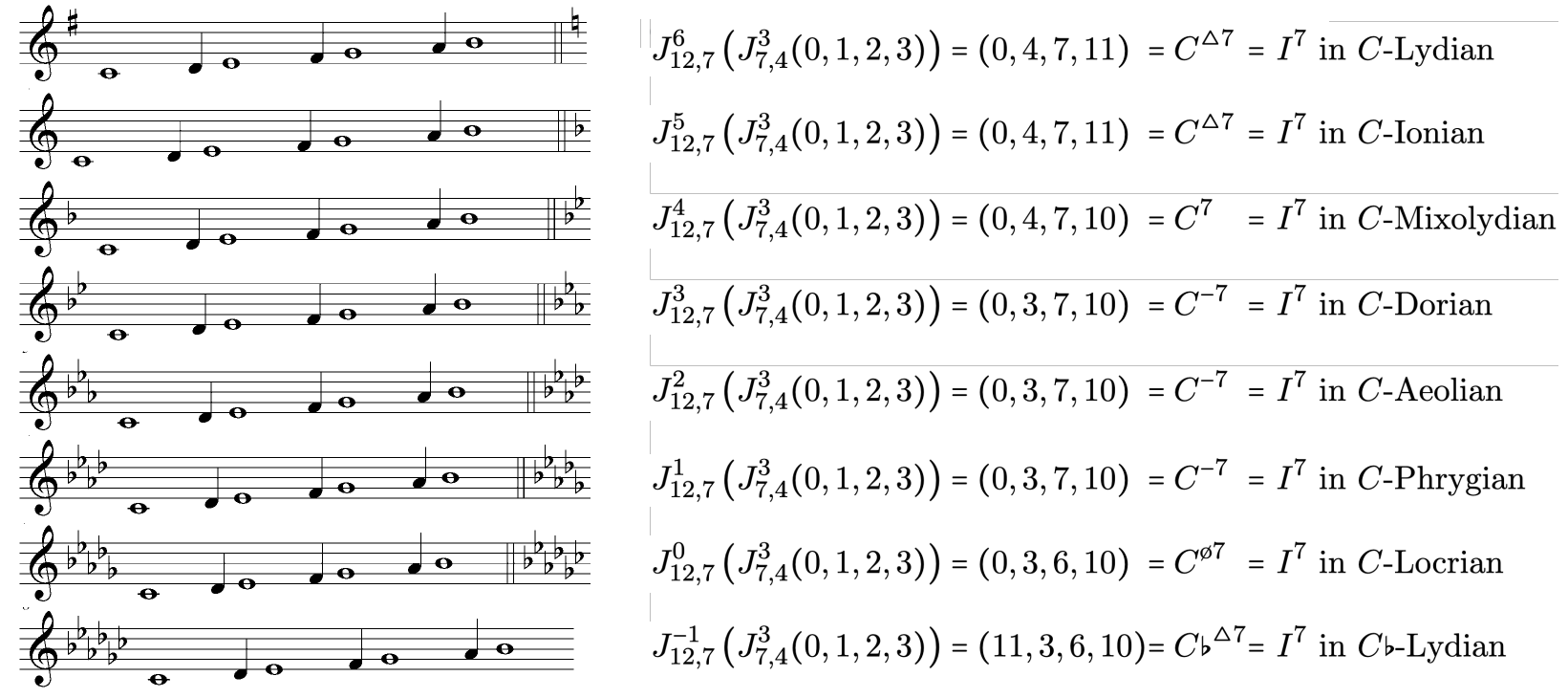}
\end{center}
\caption{The first few sevenths in an orbit of the flattening transformation. The abstract note names $\langle C, E, G, B \rangle$ are held fixed, and the containing key signature repeatedly receives a flat. In some instances, the flattened note is not in the chord, so the chord stays the same. All 48 major, dominant, minor, and half-diminished seventh chords in root position are traversed, with repeats. The cycle with repeats has 84 elements, all in root position.} \label{fig:flattening_transformation}
\end{figure}
The entire cycle is of length $84=12 \times 7$ and traverses all 48 diatonic seventh chords in root position. ``Non-productive transforms'' happen when the flattened note is a non-chord-tone. 
Apart from the long cycle $\langle \overline{f_\flat} \rangle$, a grid network with rows 4 instances of $\overline{f}$ and vertical morphisms $T_{-1}$ is an alternative way to illustrate the flattening transformation.  Optionally, an analyst may insert identity arrows in each row to strictly record the ``non-productive transforms.''
\end{example}

\section*{Acknowledgements}
\addcontentsline{toc}{section}{Acknowledgements}

Thomas Fiore thanks Julian Hook for some insights about the omnibus progression in an email correspondence in June, 2022.

\section*{AI Statement}
\addcontentsline{toc}{section}{AI Statement}

{\it None} of the ideas in this article were generated by AI. {\it None} of the text or figures of this article were generated by AI. We only used AI to to assist with (or debug) LaTeX code for human-created tables and diagrams. In particular, for Figures~\ref{fig:alternativefbar1} and \ref{fig:alternativefbar2} we used Microsoft Copilot version 1.25062.106.0 to assist with typesetting human-made drawings with \texttt{tikz}. We used GPT 4.1 through the University of Michigan to learn how to typeset enclosing circles and squares in \texttt{xymatrix}, to debug Tables~\ref{table:seventhchord_sets} and \ref{table:seventhchord_functions} and Figure~\ref{fig:flattening_transformation}, and to resolve LaTeX package conflicts.

\section*{Funding}
\addcontentsline{toc}{section}{Funding}

The visit of Thomas Fiore and Thomas Noll at the Institut de Recherche Math\'{e}matique Avanc\'{e}e, Universit\'{e} de Strasbourg, July 17--28, 2017 was supported by the University
of Strasbourg Institute for Advanced Study via the grant  Structural Music Information Research (SMIR), PI Moreno Andreatta (CNRS/University of Strasbourg \& Ircam).

Doctoral student Sonia Cannas, also a member of the SMIR Project, was supported by a scholarship of the University of Pavia in a cotutelle agreement with the University of Strasbourg.

Undergraduate students Ethan Bonnell, Hayden Pyle, No\'{e} Rodriguez, and Meredith Williams were participants in the NSF Research Experience for Undergraduates at the University of Michigan-Dearborn in Summer 2022, and were supported by National Science Foundation Grant DMS-1950102. This grant also partially supported mentor Thomas Fiore in Summer 2022.

Thomas Fiore gratefully acknowledges alumni support of the Alexander von Humboldt Foundation for the local costs of his visit in Leipzig February 12--26, 2025 to work with Thomas Noll. Thomas Fiore and Thomas Noll both thank the Max Planck Institute for Mathematics in the Sciences (MPI MiS) in Leipzig, especially Director Anna Wienhard, for hosting the visit. Thomas Noll thanks MPI MiS for local support during the visit.

Fiore's transportation to Leipzig was covered by an internal travel grant of the College of Arts, Sciences, and Letters at the University of Michigan-Dearborn.

\bibliographystyle{plain}
\bibliography{REU_arXiv}

\begin{thebibliography}{10}

\bibitem{arnettbarth_sevenths}
Jacob Arnett and Eric Barth.
\newblock Generalizations of the {\it {t}onnetz}: Tonality revisited.
\newblock In {\em Proceedings of the 2011 Midstates Conference on Undergraduate
  Research in Computer Science and Mathematics}.
  https://personal.denison.edu/~lalla/MCURCSM2011/10.pdf, 2011.

\bibitem{BerryFiore}
Cameron Berry and Thomas~M. Fiore.
\newblock Hexatonic systems and dual groups in mathematical music theory.
\newblock {\em Involve}, 11(2):253--270, 2018.

\bibitem{CannasMCM2017}
Sonia Cannas, Samuele Antonini, and Ludovico Pernazza.
\newblock On the group of transformations of classical types of seventh chords.
\newblock In Octavio~A. Agust\'{i}n-Aquino, Emilio Lluis-Puebla, and Mariana
  Montiel, editors, {\em Mathematics and Computation in Music}, Lecture Notes
  in Computer Science. Heidelberg: Springer, 2017.

\bibitem{childs}
Adrian Childs.
\newblock {M}oving {B}eyond {N}eo-{R}iemannian {T}riads: {E}xploring a
  {T}ransformational {M}odel for {S}eventh {C}hords.
\newblock {\em Journal of Music Theory}, 42(2):191--193, 1998.

\bibitem{cloughFlipFlop}
John Clough.
\newblock {F}lip-{F}lop {C}ircles and {T}heir {G}roups.
\newblock In Jack Douthett, Martha~M. Hyde, and Charles~J. Smith, editors, {\em
  Music {T}heory and {M}athematics: {C}hords, {C}ollections, and
  {T}ransformations}, volume~50 of {\em Eastman Studies in Music}. University
  of Rochester Press, 2008.

\bibitem{cloughdouthett_1991}
John Clough and Jack Douthett.
\newblock Maximally even sets.
\newblock {\em Journal of Music Theory}, 35(1/2):93--173, 1991.

\bibitem{cohn1996}
Richard Cohn.
\newblock Maximally smooth cycles, hexatonic systems, and the analysis of
  late-romantic triadic progressions.
\newblock {\em Music Analysis}, 15(1):9--40, 1996.

\bibitem{cohn1997}
Richard Cohn.
\newblock Neo-{R}iemannian operations, parsimonious trichords, and their
  ``{T}onnetz'' representations.
\newblock {\em Journal of Music Theory}, 41(1):1--66, 1997.

\bibitem{cohnsurvey}
Richard Cohn.
\newblock Introduction to neo-riemannian theory: A survey and a historical
  perspective.
\newblock {\em Journal of Music Theory}, 42(2):167--180, 1998.

\bibitem{conradkeith_splitting}
Keith Conrad.
\newblock Splitting of short exact sequences for groups.
\newblock {\em Unpublished}, Undated.

\bibitem{cransfioresatyendra}
Alissa~S. Crans, Thomas~M. Fiore, and Ramon Satyendra.
\newblock Musical actions of dihedral groups.
\newblock {\em American Mathematical Monthly}, 116(6):479--495, 2009.

\bibitem{douthettFIPSandDynVoiceLeading}
Jack Douthett.
\newblock Filtered point-symmetry and dynamical voice leading.
\newblock In Jack Douthett, Martha~M. Hyde, and Charles~J. Smith, editors, {\em
  Music {T}heory and {M}athematics: {C}hords, {C}ollections, and
  {T}ransformations}, volume~50 of {\em Eastman Studies in Music}. University
  of Rochester Press, 2008.

\bibitem{DouthettSteinbach}
Jack Douthett and Peter Steinbach.
\newblock Parsimonious graphs: A study in parsimony, contextual
  transformations, and modes of limited transposition.
\newblock {\em Journal of Music Theory}, 42(2):241--263, 1998.

\bibitem{DummitFoote}
David~S. Dummit and Richard~M. Foote.
\newblock {\em Abstract algebra}.
\newblock John Wiley \& Sons, Inc., Hoboken, NJ, third edition, 2004.

\bibitem{fiorenoll_SIAM}
Thomas~M. Fiore and Thomas Noll.
\newblock Voicing transformations of triads.
\newblock {\em SIAM Journal on Applied Algebra and Geometry}, 2(2):281--313,
  2018.

\bibitem{fiorenollsatyendraMCM2013}
Thomas~M. Fiore, Thomas Noll, and Ramon Satyendra.
\newblock Incorporating voice permutations into the theory of neo-{R}iemannian
  groups and {L}ewinian duality.
\newblock In {\em Mathematics and computation in music}, volume 7937 of {\em
  Lecture Notes in Comput. Sci.}, pages 100--114. Springer, Heidelberg, 2013.

\bibitem{fiorenollsatyendraSchoenberg}
Thomas~M. Fiore, Thomas Noll, and Ramon Satyendra.
\newblock Morphisms of generalized interval systems and {$PR$}-groups.
\newblock {\em Journal of Mathematics and Music}, 7(1):3--27, 2013.

\bibitem{fioresatyendra2005}
Thomas~M. Fiore and Ramon Satyendra.
\newblock Generalized contextual groups.
\newblock {\em Music Theory Online}, 11(3), 2005.

\bibitem{gollin}
Edward Gollin.
\newblock {S}ome {A}spects of {T}hree-{D}imensional \emph{{T}onnetze}.
\newblock {\em Journal of Music Theory}, 42(2):195--206, 1998.

\bibitem{hookUTTthesis2002}
Julian Hook.
\newblock {\em Uniform Triadic Transformations}.
\newblock PhD thesis, Indiana University, 2002.

\bibitem{hookUTT2002}
Julian Hook.
\newblock Uniform triadic transformations.
\newblock {\em Journal of Music Theory}, 46(1-2):57--126, 2002.

\bibitem{hook2007cross}
Julian Hook.
\newblock Cross-type transformations and the path consistency condition.
\newblock {\em Music Theory Spectrum}, 29(1):1--39, 2007.

\bibitem{KerkezBridges}
Boris Kerkez.
\newblock An extension of the neo-{R}iemannian {$PLR$}-group to seventh chords.
\newblock In Robert Bosch, Douglas McKenna, and Reza Sarhangi, editors, {\em
  Bridges Towson: Mathematics, Music, Art, Architecture, Culture}, pages
  485--488. Phoenix, AZ: Tessellations Publishing, 2012.

\bibitem{kolman}
Oren Kolman.
\newblock Transfer principles for generalized interval systems.
\newblock {\em Perspectives of New Music}, 42(1):150--190, 2004.

\bibitem{LewinGMIT}
David Lewin.
\newblock {\em Generalized Musical Intervals and Transformations}.
\newblock Yale University Press, New Haven, 1987.

\bibitem{popoff2013}
Alexandre Popoff.
\newblock Building generalized neo-riemannian groups of musical transformations
  as extensions.
\newblock {\em Journal of Mathematics and Music}, 7(1):55--72, 2013.

\bibitem{Waller}
Derek~A. Waller.
\newblock Some combinatorial aspects of the musical chords.
\newblock {\em The Mathematical Gazette}, 62(419):12--15, 1978.

\bibitem{ziehn}
Bernhard Ziehn.
\newblock {\em Canonical Studies: a New Technic in Composition = Canonische
  Studien : eine Neue Compositions-Technik}.
\newblock Wm. A. Kaun Music Co. and Richard Kaun Musik Verlag, Milwaukee and
  Berlin, 1912.

\end{thebibliography}

\addcontentsline{toc}{section}{References}

\end{document}